\documentclass[a4paper,10pt]{amsart}

\usepackage{amsmath,amssymb,amsthm,hyperref,graphicx,environ,dsfont,enumitem,tikz,float,tikz-cd,placeins,bm,color,mathrsfs,comment,stmaryrd,faktor}
\usepackage[normalem]{ulem}
\newtheorem{theorem}{Theorem}
\newtheorem{proposition}[theorem]{Proposition}
\newtheorem{corollary}[theorem]{Corollary}
\newtheorem{lemma}[theorem]{Lemma}
\newtheorem{conjecture}[theorem]{Conjecture}

\theoremstyle{definition}

\newtheorem{example}[theorem]{Example}
\newtheorem{definition}[theorem]{Definition}
\newtheorem{remark}[theorem]{Remark}

\allowdisplaybreaks

\newcommand{\dd}{\mathrm{d}}
\newcommand{\ii}{\mathrm{i}}
\newcommand{\ee}{\mathrm{e}}
\newcommand{\SU}{\mathrm{SU}}

\newcommand{\A}{\mathrm{A}}

\newcommand{\X}{\mathrm{X}}

\newcommand{\NN}{\mathbb N} 
\newcommand{\ZZ}{\mathbb Z} 
\newcommand{\QQ}{\mathbb Q} 
\newcommand{\CC}{\mathbb C} 
\DeclareMathOperator{\re}{Re} 
\DeclareMathOperator{\im}{Im} 

\DeclareMathOperator{\diag}{diag}

\DeclareMathOperator{\id}{id}

\DeclareMathOperator{\wt}{wt}
\DeclareMathOperator{\reg}{reg}
\DeclareMathOperator{\rev}{rev}
\DeclareMathOperator*{\res}{Res}
\DeclareMathOperator{\Li}{Li}

\begin{document}
\title[Trivial zeros of zeta functions of $\A_r$ type]{Trivial zeros of zeta functions of type $\mathrm{A}_r$}
\author{Simon Rutard}
\address{IMSc, CIT Campus, Taramani, Chennai 600113, India}
\email{simont@imsc.res.in}
\author{Takeshi Shinohara}
\address{Graduate School of Mathematics, Nagoya University, Furo-cho, Chikusa-ku, Nagoya 464-8602, Japan}
\email{shinohara.takeshi@math.nagoya-u.ac.jp}
\date{\today}
\subjclass[2020]{Primary 11M32}
\keywords{Euler--Zagier multiple zeta functions, Witten zeta function, zeta functions of root systems, trivial zeros, Eisenstein series.}
\begin{abstract}
	In this paper, we investigate the values of zeta functions of root systems at non-positive integer points. In particular, we focus on the zeta functions of type $\mathrm{A}_r$.
    Since non-positive integer points are included in the set of singularities of zeta functions of root systems, we introduce the \emph{ordered limit values} to define those values. 
    We study four particular ordered limit values and give explicit formulas for them, or recurrence formulas that allow us to calculate them inductively. 
    We then prove that three ordered limit values of zeta functions of type $\mathrm{A}_r$ vanish under some conditions that can be regarded as a multivariable analogue of the trivial zeros of the Riemann zeta function. 
    Finally, we discuss the other zeros of zeta functions of type $\mathrm{A}_r$ at non-positive integer points and a possible connection of such zeros and non-trivial identities among classical Eisenstein series.
\end{abstract}
\thanks{The first named author is supported by a Postdoctoral Fellowship from IMSc.}
\maketitle

\tableofcontents

\section{Introduction}
We begin by recalling the classical Riemann zeta function $\zeta(s):=\sum_{m\ge1}\frac{1}{m^s}$ ($\re(s)>1$). It is well-known that it has an analytic continuation to the whole complex plane and has a simple pole at $s=1$. It is also known that its values at non-positive integers are given explicitly by
\begin{align*}
    \zeta(-\ell) = (-1)^{\ell}\frac{B_{\ell+1}}{\ell+1} \quad (\ell\in\NN_0),
\end{align*}
where $B_{\ell}$ is the $\ell$-th Bernoulli number defined by the generating function
\begin{align*}
  \frac{X}{\ee^{X}-1} = \sum_{\ell\ge0}B_{\ell}\frac{X^\ell}{\ell!}.
\end{align*}
Since $B_\ell=0$ for odd $\ell>1$, we immediately obtain
\begin{align*}
    \zeta(-2\ell) = 0 \quad (\ell>0).
\end{align*}
Such zeros of the Riemann zeta function are called ``\emph{trivial zeros}'' today. 

Since Riemann's time, numerous generalizations and analogues of the Riemann zeta function have been introduced and studied.
For example, the Barnes zeta function, the Shintani zeta function, the Euler--Zagier multiple zeta function, the Witten zeta function, the zeta functions of root systems, multiple Dirichlet series with polynomial denominator, etc.
For many of these functions, analytic continuation has been established, and explicit formulas at non-positive integers are sometimes obtained.
In particular, Komori considered a very general multiple zeta function\footnote{Komori considers the more general Dirichlet series that has the twisted factor in the numerator.} that contains many classes of multivariable zeta functions: 
\begin{align*}
    \zeta({\bm{a}},\bm{b},\bm{s})
    := \sum_{m_1,\dots,m_R\ge0}
         \prod_{j=1}^N(a_j+b_{j1}m_1+\cdots+b_{jR}m_R)^{-s_j}
\end{align*}
and established the formula of its values at non-positive integer points:

\noindent
For $\bm{j}\in\NN_{0}^N$, ($\NN_0:=\ZZ_{\ge0}$) it follows 
\begin{align*}
 `\zeta(\bm{a},\bm{b},-\bm{j})'
 = (-1)^{j_1+\cdots+j_N}j_1!\cdots j_N!\frac{B_{w}(\bm{a},\bm{b};\bm{k})}{k_1!\cdots k_N!}
\end{align*} 
where $k_1,\dots,k_N$ depend on $j_1,\ldots,j_N$ and $B_{w}(\bm{a},\bm{b};\bm{k})$ is the \emph{generalized Bernoulli number}.
See \cite[Theorem 3.21]{komori2010integral} for details.
It should be mentioned here that, in most cases, the set of singularities of a multivariable zeta function contains the non-positive integer points and therefore the values at those points are not well-defined. It has become standard practice to consider these values as limit values and the symbol '$\zeta(\bm{a},\bm{b},-\bm{j})$' stands for a certain limit value.
In contrast to the case of the Riemann zeta function, since it is not so obvious when $B_{w}(\bm{a},\bm{b};\bm{k})$ vanishes, trivial zeros of the zeta function $\zeta({\bm{a}},\bm{b},\bm{s})$ have been less well understood. 
In \cite{essouabri20valueseulerzagier}, the authors pointed out that in a specific case, such as the Euler--Zagier case, the value $\zeta(\bm{a},\bm{b},-\bm{j})$ can be explicitly expressed by the finite sum of classical Bernoulli numbers.
However, it remains unclear when the value $\zeta(\bm{a},\bm{b},-\bm{j})$ vanishes.

One might wonder whether it would be more natural to consider a simpler class, rather than Komori’s very general class of multiple zeta functions, such as the Euler--Zagier multiple zeta functions 
\begin{align*}
    \zeta_{r}(s_1,\dots,s_r)
  := \sum_{m_1,\dots,m_r\ge1} \frac{1}{m_1^{s_1}(m_1+m_2)^{s_2}\cdots(m_1+\cdots+m_r)^{s_r}},
\end{align*}
and investigate their trivial zeros. 
In fact, relatively few results are known on trivial zeros even for special subclasses of Komori's multiple zeta functions, as well as for certain more general classes. 
The following works contain the main results known so far on trivial zeros of multiple zeta functions: \cite{zhao1999analyticcontiofmzf}, \cite{kamano2006lerchsformula}, and \cite{essouabri2021values}.
In the first two papers, the authors treated the Euler--Zagier multiple zeta function.
Zhao discussed the trivial zeros for Euler--Zagier double and triple zeta functions and Kamano showed that the \emph{center values} of the Euler--Zagier multiple zeta functions, namely certain limit values at non-positive integers, vanish at  $(-2\ell,\dots,-2\ell)$ ($\ell \in\NN$).
In \cite{essouabri2021values}, Essouabri and Matsumoto studied the multiple zeta functions whose defining multiple Dirichlet series have denominators given by polynomials, and discussed trivial zeros in the case of power sum denominators. Since their setting is somewhat different from ours, we will not discuss their results further in this paper.

There are also interesting results for one-variable (generalized) zeta functions. 
We mention the works of Romik \cite{romik2017representations} and Au \cite{au2024vanishingwitten}, as an example. 
They considered the function known as the Witten zeta function, which is defined as follows. 
\begin{align*}
  \zeta_{\mathfrak{g}}(s)
  := \sum_{\pi} (\dim \pi)^{-s}
\end{align*}
where the sum runs over all nonequivalent irreducible finite-dimensional complex representations of a semisimple Lie algebra $\mathfrak{g}$. 
Romik investigated the trivial zeros of the Witten zeta function in the case $\mathfrak{g}=\mathfrak{sl}_3$, while Au treated the more general cases. 
Even more interestingly, as we will explain later, trivial zeros of the Witten zeta function can sometimes be lifted to identities among classical Eisenstein series $G_{n}(\tau)$ ($n\ge3$, $\tau\in\mathbb{H}:=\{z\in\CC\,|\,\im{z}>0\}$). 
For example, Romik obtained the relation 
\begin{equation*}
 G_{6\ell+2}(\tau) 
 = \frac{1}{6\ell+1}\cdot\frac{(4\ell+1)!}{(2\ell)!^2}
   \sum_{k=1}^{\ell}
    \frac{\binom{2\ell}{2k-1}}{\binom{6\ell}{2\ell+2k-1}} G_{2\ell+2k}(\tau) G_{4\ell-2k+2}(\tau) 
 ,\quad (\ell\in\NN)
\end{equation*}
from the zeros of the Witten zeta function $\zeta_{\mathfrak{sl}_3}(-\ell)=0$, see \S \ref{sec: other zeros}. Au conjectured other relations between Eisenstein series coming from the vanishing of other Witten zeta functions in \cite{au2025valueswitten}.

\bigskip
We now state the purpose of the present paper. Our work may be viewed as a natural continuation of the studies of Zhao \cite{zhao1999analyticcontiofmzf}, Kamano \cite{kamano2006lerchsformula}, Romik \cite{romik2017representations}, and Au \cite{au2024vanishingwitten}. 
Namely, we investigate trivial zeros at non-positive integer points of the \emph{zeta functions of root systems} of type $\A_r$, given by 
\begin{equation}
 \zeta_{\A_r}(\bm{s})
  = \sum_{m_1,\dots,m_r\in\NN} \prod_{1\le i\le j\le r}(m_i+\cdots+m_j)^{-s_{ij}}. \label{eqn: explicit formula of Ar zeta function} 
\end{equation}
These functions form a class of multivariable complex functions that simultaneously generalizes both the Euler–Zagier multiple zeta functions and the Witten zeta functions.
Since when $r\ge2$, the set of all singularities of $\zeta_{\A_r}(\bm{s})$ contains the non-positive integer points, we consider these values at such points as a certain limit value.
In \S \ref{sec: val of zetaAr at negative}, we give a precise definition of the \emph{ordered limit values} of $\zeta_{\A_r}(\bm{s})$, denoted by $\zeta_{\A_r}(\overset{w}{-\bm{\ell}})$, where $\bm{\ell}$ is a non-negative integer tuple and $w$ is a permutation. 
Since it is not easy to calculate ordered limit values in general, in this paper we define and study four particular values, the \emph{Euler--Zagier values}, the \emph{regular values}, the \emph{diagonal values}, and the \emph{reverse values}. We will give some formulas for them that allow us to compute them inductively. 
We then show that these values vanish at certain points $-\bm{\ell} =(-\ell_{ij})_{1 \leq i \leq j \leq r}$, provided that the weight $\wt(\bm{\ell}) := \sum_{i \leq j} \ell_{ij}$ satisfies a certain congruence relation. 

\begin{theorem}\label{thm: main theorem}
    Let $\bm{\ell}=(\ell_{ij})_{1 \leq i \leq j \leq r} \in \NN_{0}^{\tfrac{r(r+1)}{2}}$ be a tuple such that \\
    \begin{equation}\label{eq:trivial_zero_condition}
       \ell_{11},\ell_{22},\ldots,\ell_{rr} \geq 1 \quad \text{and} \quad
       \wt(\bm{\ell}) \equiv r-1 \mod 2.
    \end{equation}\\
    Then for $w \in \{ \reg,\diag,\rev \}$, we have 
    \begin{align*}
        \zeta_{\A_r}(\overset{w}{-\bm{\ell}})=0.
    \end{align*}
\end{theorem}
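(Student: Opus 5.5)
The plan is to argue by induction on $r$, using the recurrence formulas for the regular, diagonal and reverse values that the paper establishes in the later sections. The case $r=1$ is the classical statement: $\zeta(-\ell_{11})=0$ when $\ell_{11}\ge1$ and $\ell_{11}\equiv 0 \pmod 2$, which follows from $B_{\ell+1}=0$ for even $\ell\ge2$. For general $r$, each ordered limit value $\zeta_{\A_r}(\overset{w}{-\bm{\ell}})$ with $w\in\{\reg,\diag,\rev\}$ should come out as a finite $\QQ$-linear combination of terms of two kinds.

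The first kind consists of products of the form $\zeta(-n)\cdot\zeta_{\A_{r-1}}(\overset{w'}{-\bm{\ell}'})$, or more generally products of values of lower rank. Here the arguments are obtained by peeling off one simple root: the variables are shifted $m_i\mapsto m_i+m_{i+1}$ (or the analogous shift), and the factors $(m_i+\cdots+m_j)^{\ell_{ij}}$ are expanded binomially. The second kind consists of boundary terms carrying factors $1/(n+1)$ coming from the Euler--Maclaurin/Bernoulli polynomial expansion. I would write the expansion in generating function form, $\sum_{m\ge1}e^{-mX}$ regularized, so that every term is a product of Bernoulli numbers $B_{k}$ times multinomial coefficients.

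The key parity bookkeeping is the following. Weight is preserved by the binomial expansions: the total degree in the $m$'s equals $\wt(\bm{\ell})$. Each ``integration'' step, which replaces a sum over $m_k$ by a Bernoulli number $B_{n+1}/(n+1)$ or by $\int$, contributes an extra degree shift of $1$. After $r$ summations, a monomial of total degree $\wt(\bm{\ell})$ yields a product $\prod_{k=1}^r B_{n_k+1}$ with $\sum_k (n_k+1)=\wt(\bm{\ell})+r$, up to the terms where some step is a genuine integral, which removes a Bernoulli factor and lowers the count. When $\wt(\bm{\ell})\equiv r-1 \pmod 2$, we get $\sum_k(n_k+1)$ odd. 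Hence in every product at least one index $n_k+1$ is odd, and that Bernoulli number vanishes unless $n_k+1=1$, i.e.\ $B_1=-\tfrac12$.

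So the remaining task is to show that the surviving terms involving $B_1$ (together with the integral terms) cancel or vanish. This is where the hypothesis $\ell_{ii}\ge1$ enters. The variable $m_i$ appears alone in the factor $m_i^{\ell_{ii}}$, so the monomials summed over $m_i$ always carry degree $\ge1$ in $m_i$ from that factor. The degenerate contribution where $m_i$ would be summed with exponent $0$, which is exactly the one producing $\zeta(0)=B_1$-type or pole/integral terms in the limit, either does not occur or enters with a factor that kills it.

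I expect this last point to be the main obstacle. It requires tracking precisely how the ordered limit for each $w$ handles the singular terms: for $\reg$, the limit is taken generically; for $\diag$, along the diagonal; for $\rev$, in reverse order. In each case one must show that the $B_1$ and integral contributions pair up with opposite signs. My first attempt would be a symmetry argument, using $X\mapsto -X$ in the generating function $\frac{X}{e^X-1}+\frac X2$, which is even. Rewriting each recurrence in terms of $\tilde B_n := B_n+\tfrac12\delta_{n1}$, which vanish for all odd $n$, should absorb the $B_1$ terms. The induction hypothesis then applies to lower-rank values whose parameters again satisfy \eqref{eq:trivial_zero_condition}, because removing one diagonal entry $\ell_{kk}$ together with its Bernoulli factor of odd-parity index preserves the congruence with $r$ replaced by $r-1$.
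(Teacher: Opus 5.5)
The step you flag as the ``main obstacle'' is where the whole content of the theorem lies, and you leave it open. It is also misdiagnosed for two of the three cases.

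\textbf{Diagonal and regular values.} Here no cancellation between $B_1$-terms and integral terms is needed.

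For $\diag$, the explicit formula gives sums of $\prod_i\zeta(-\ell_{ii}-K_i)$ with $\sum_i(\ell_{ii}+K_i+1)=\wt(\bm\ell)+r$ odd. Every $\ell_{ii}+K_i\ge1$, so some factor is $\zeta$ at a negative even integer. Your ``$m_i$ always carries degree $\ge1$'' remark is exactly the paper's argument in this case.

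For $\reg$, the recurrence of Proposition~\ref{prop: recurrence formula for reg val of Ar-zeta} closes an induction on $r$ directly.
\begin{itemize}
\item In the first term, $\wt(\bm\ell'(\bm p))+\ell_{rr}+|\bm q|=\wt(\bm\ell)$, with $\ell_{rr}+|\bm q|\ge1$ and all diagonal entries of $\bm\ell'(\bm p)$ at least $1$. So either $\zeta(-\ell_{rr}-|\bm q|)=0$, or $\bm\ell'(\bm p)$ satisfies the $\A_{r-1}$ trivial zero condition.
\item In the boundary term, $\wt(\bm\ell''(\bm p))=\wt(\bm\ell)+1$ while the rank drops by one, and the diagonal entries of $\bm\ell''(\bm p)$ are still $\ge1$. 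So this term vanishes on its own by induction.
\end{itemize}
Your remark that an integral step ``removes a Bernoulli factor and lowers the count'' overlooks that it also raises the weight by one. Hence weight plus rank is invariant mod $2$, and $\zeta(0)=B_1$-type factors never occur. The paper encodes exactly this as the parity of $G^{\mathrm{reg}+}_{\A_r}$.

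\textbf{Reverse values.} Here the gap is genuine. The paper proves no recurrence of your two-term shape for $\rev$. The reverse value decomposes into Euler--Zagier reverse values. Expanding $m_i^{\ell_{ii}}$ in differences of partial sums and then performing iterated power sums does produce $B_1$-contributions, and these cancel only collectively. Already for $r=2$: in $\sum_{m=1}^{N-1}m(N-m)=(N^3-N)/6$, the $B_1$-parts of $N\sum m$ and $\sum m^2$ cancel. Your substitution $B_n\mapsto\tilde B_n$ is not justified by anything in the argument.

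The missing idea is the Hurwitz deformation $Z_{\bm\ell}(s;\bm x)$.
\begin{itemize}
\item Write it through $F_{\bm n}(-m;\bm x)$ as combinations of products $\zeta(-n_a-k;x_a)$, and use $\zeta(-k;1-x)=(-1)^{k+1}\zeta(-k;x)$. This yields the reflection $Z_{\bm\ell}(-\ell_{1r};\bm x)=(-1)^{\wt(\bm\ell)-r}Z_{\bm\ell}(-\ell_{1r};\bm 1-\bm x)$.
\item The hypothesis $\ell_{ii}\ge1$ makes every Hurwitz argument at most $-1$. Hence $Z_{\bm\ell}(-\ell_{1r};\bm 0)=Z_{\bm\ell}(-\ell_{1r};\bm 1)$.
\end{itemize}
That equality is the rigorous form of your ``the sign of $B_1$ does not matter'' intuition. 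Combined with the reflection, it forces $\zeta_{\A_r}(\overset{\rev}{-\bm\ell})=0$ whenever $\wt(\bm\ell)-r$ is odd.
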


\noindent
We refer to condition \eqref{eq:trivial_zero_condition} as the \emph{trivial zero condition} of $\zeta_{\A_r}(\bm{s})$.

When $r=1$, our theorem recovers the trivial zeros of the Riemann zeta function.
In this sense, the above Theorem can be regarded as a natural multivariable analogue of the trivial zeros of the Riemann zeta function. We also conjecture that the points where every ordered limits vanish correspond precisely to the points $\bm{\ell} \in \NN_{0}^{\frac{r(r+1)}{2}}$ satisfying the trivial zero condition \eqref{eq:trivial_zero_condition}.

\begin{conjecture}
    Let $\bm{\ell}=(\ell_{ij})_{1\le i\le j\le r}\in\NN_0^{\frac{r(r+1)}{2}}$. Then 
    \[
       \left(\forall w \in \mathfrak{S}_{\frac{r(r+1)}{2}}, \, \zeta_{\A_r}(\overset{w}{-\bm{\ell}}) = 0 \right) \Leftrightarrow \ell_{11},\ell_{22},\ldots,\ell_{rr} \geq 1 \text{ and } \wt(\bm{\ell}) = r-1 \mod 2.
    \]
\end{conjecture}

We also investigate the following point.
In \S \ref{sec: other zeros}, we will observe the other zeros of $\zeta_{\A_r}(\bm{s})$ at negative integers. In particular, although we have not succeeded in proving this, Mathematica calculations suggest that
\begin{align*}
    \zeta_{\A_r}\overset{\reg}{(-\ell,\dots,-\ell)} = 0 
\end{align*}
for specific $r$. Here, all components are the same negative integer $-\ell<0$ that does not satisfy the trivial zero condition \eqref{eq:trivial_zero_condition}. We will also discuss a potential connection between such zeros at non-positive integer points and some identities among Eisenstein series. We shall see that when $r=2$, we recover Romik's relation stated above. 

The paper is organized as follows. 
In Section 2, we recall the zeta functions of root systems as much as we need. 
In Section 3, we introduce the basic framework of the limit values of the zeta functions of $\A_r$ at non-positive integer points. We also define and study four particular values.
In Section 4, we shall prove Theorem \ref{thm: main theorem}.
Finally, in Section 5, we discuss the other zeros of zeta functions of $\A_r$ at non-positive integers. 


\section{Quick review of the zeta functions of root systems of type \texorpdfstring{$\A_r$}{Ar}}
\label{sec: quick review of zetaAr}
In this section, we briefly review the zeta functions of type $\A_r$.
In particular, we explain the \emph{matrix-like notation} for zeta functions of type $\A_r$ introduced in \cite{komiyama2025shuffle}.

Based on formula \eqref{eqn: explicit formula of Ar zeta function}, we rewrite $\zeta_{\A_r}(\bm{s})$ as 
\begin{align} \label{eqn: figure of variables of zeta functions of type Ar}
\zeta_{\A_r}(\bm{s})
= \zeta_{\A_r}\left(\begin{matrix}
               s_{11} &s_{12} & \cdots &s_{1r} \\
                       &s_{22} & \cdots &s_{2r} \\
                       &        & \ddots &\vdots \\
                       &       &      &s_{rr}
                 \end{matrix}\right).            
\end{align}

\begin{example}
For $s_1,s_2,s_3\in\CC$ with $\re(s_j)>1$,     
\begin{align*}
 \zeta_{\A_2}\left(\begin{matrix} s_1 &s_2 \\ &s_3 \end{matrix}\right)
 = \sum_{m_1,m_2\in\NN}\frac{1}{m_1^{s_1}(m_1+m_2)^{s_2} m_2^{s_3}}
 = \sum_{m_1,m_2\in\NN}\frac{1}{m_1^{s_3}(m_1+m_2)^{s_2} m_2^{s_1}}
 = \zeta_{\A_2}\left(\begin{matrix} s_3 &s_2 \\ &s_1 \end{matrix}\right).
\end{align*} 
For $s_1$, $s_2$, $s_3$, $s_4$, $s_5$, $s_6\in\CC$ with $\re(s_j)>1$, 
\begin{align*}
 \zeta_{\A_3}\left(\begin{smallmatrix} s_1 &s_2 &s_3 \\ &s_4 &s_5\\ & &s_6 \end{smallmatrix}\right)
 = \zeta_{\A_3}
     \left(\begin{smallmatrix} s_6 &s_5 &s_3 \\ &s_4 &s_2\\ & &s_1 \end{smallmatrix}\right),\quad
 \zeta_{\A_3}\left(\begin{smallmatrix} s_1 &0 &0 \\ &s_4 &s_5\\ & &s_6 \end{smallmatrix}\right)
 = \zeta_{\A_3}
     \left(\begin{smallmatrix} s_1 &0 &0 \\ &s_6 &s_5\\ & &s_4 \end{smallmatrix}\right).
\end{align*}
For $s_1,\dots,$ $s_{10}\in\CC$ with $\re(s_j)>1$,
\begin{align*}
\zeta_{\A_4}
    \left(\begin{smallmatrix} 
       s_1 &s_2 &s_3 &s_4 \\ &s_5 &s_6 &s_7\\ & &s_8 &s_9\\ & & &s_{10} \end{smallmatrix}\right)
 = \zeta_{\A_4}
    \left(\begin{smallmatrix} 
       s_{10} &s_9 &s_7 &s_4 \\ &s_8 &s_6 &s_3\\ & &s_5 &s_2\\ & & &s_1 \end{smallmatrix}\right),
\quad
  \zeta_{\A_4}
    \left(\begin{smallmatrix} 
       s_1 &0 &s_2 &s_3 \\ &s_4 &s_5 &s_6\\ & &s_7 &0\\ & & &s_8 \end{smallmatrix}\right)
 = \zeta_{\A_4}
    \left(\begin{smallmatrix} 
       s_1 &0 &s_2 &s_3 \\ &s_7 &s_5 &s_6\\ & &s_4 &0\\ & & &s_8 \end{smallmatrix}\right).
\end{align*}
\end{example}

From these examples, we sometimes observe symmetry in the variables $\bm{s}$.
In general, we have the following.

\begin{lemma}\label{lem: sym of variables}
    We have 
    \[
        \zeta_{\A_r}\left(
        \begin{smallmatrix}
            s_{11} & s_{12} & \cdots & s_{1r} \\
            & s_{22} & \cdots & s_{2r} \\
            & & \ddots & \vdots \\
            & & & s_{rr}
        \end{smallmatrix}
        \right) = \zeta_{\A_r}\left(
        \begin{smallmatrix}
            s_{rr} & s_{(r-1)r} & \cdots & s_{1r} \\
            & s_{(r-1)(r-1)} & \cdots & s_{1(r-1)} \\
            & & \ddots & \vdots \\
            & & & s_{11}
        \end{smallmatrix}
        \right).     
    \]
\end{lemma}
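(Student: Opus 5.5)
The plan is to prove the identity in the region of absolute convergence, say $\re(s_{ij})>1$ for all $i\le j$. Both sides are then given by the Dirichlet series \eqref{eqn: explicit formula of Ar zeta function}, and we will show they agree by reindexing the summation. The extension to all $\bm{s}$ will follow from the identity theorem for meromorphic continuations, applied on the common domain of holomorphy. The natural reindexing is the reversal of the summation variables, namely the bijection $\NN^r\to\NN^r$ given by $(m_1,\dots,m_r)\mapsto(m_1',\dots,m_r')$ with $m_k':=m_{r+1-k}$.

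Under this substitution the partial sums transform as
\[
m_i'+\cdots+m_j'=m_{r+1-j}+\cdots+m_{r+1-i}, \qquad 1\le i\le j\le r.
\]
So the factor attached to the position $(i,j)$ in the original variables becomes the factor attached to $(r+1-j,\,r+1-i)$ in the new ones. The map $(i,j)\mapsto(r+1-j,r+1-i)$ is an involution on $\{(i,j):1\le i\le j\le r\}$, since $r+1-j\le r+1-i$. Hence
\[
\zeta_{\A_r}\bigl((s_{ij})\bigr)=\sum_{\bm m\in\NN^r}\prod_{i\le j}(m_i+\cdots+m_j)^{-s_{r+1-j,\,r+1-i}} .
\]
Here we use absolute convergence to justify the rearrangement of the series.

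It then remains to check that the array with $(i,j)$-entry $s_{r+1-j,\,r+1-i}$ is exactly the array displayed on the right-hand side of the lemma. Its first row is $s_{rr},s_{(r-1)r},\dots,s_{1r}$ and its last column is $s_{1r},\dots,s_{11}$, which matches the display. In other words, the new array is the reflection of the original across the anti-diagonal. This is consistent with the examples given earlier for $r=2,3,4$.

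None of the steps is a real obstacle; the only point requiring care is the index bookkeeping. The analytic continuation step is standard, since both sides are meromorphic in $\bm{s}$ on $\CC^{r(r+1)/2}$ and agree on an open set, so they agree identically.
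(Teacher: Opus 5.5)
Your proposal is correct and follows essentially the same route as the paper: reverse the summation variables via $m_k\mapsto m_{r+1-k}$, observe that the factor indexed by $(i,j)$ moves to $(r+1-j,r+1-i)$, and read off the anti-diagonally reflected array. You additionally make explicit the absolute-convergence justification and the extension by meromorphic continuation, which the paper leaves implicit.
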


\begin{proof}
Putting $n_i:=m_{r+1-i}$ on the right-hand side of \eqref{eqn: explicit formula of Ar zeta function}, we have
\begin{align*}
\zeta_{\A_r}((s_{{ij}}))
  &= \sum_{n_1,\dots,n_r\in\NN} \prod_{1\le i\le j\le r}(n_{r+1-i}+\cdots+n_{r+1-j})^{-s_{ij}}\\
  &= \sum_{n_1,\dots,n_r\in\NN} \prod_{1\le i\le j\le r}(n_i+\cdots+n_{j})^{-s_{r+1-j,r+1-i}} 
   = \zeta_{\A_r}((s_{r+1-j,r+1-i}))
\end{align*}
as desired.
\end{proof}

\begin{remark}
The same idea holds for the more general case.
Let $1\le i< j\le r$. 
If $s_{k,\ell}=0$ for $1\le k\le i-1$ and $i\le \ell\le j-1$, 
$s_{p,q}=0$ for $i\le p\le j-1$ and $j\le q\le r$,
then, we have 
\begin{align*}
 \zeta_{\A_r}((s_{{k\ell}})) = \zeta_{\A_r}((s'_{k\ell}))
\end{align*}
where 
\begin{align*}
 s'_{k\ell} = \begin{cases}
             s_{i+j-\ell,i+j-k} &(i\le k\le j \ \ \text{and}\ \  k\le \ell\le j), \\
             s_{k\ell} &(\text{otherwise}).
           \end{cases}
\end{align*}
\end{remark}

\begin{remark}
    The previous lemma is a special case of a general symmetry property satisfied by zeta functions of root systems already discussed in \cite{komori2023root}. Let $\X_r$ be a root system, denote by $\zeta_{\X_r}(\bm{s})$ the zeta function of type $\X_r$, and let $\sigma$ be a graph automorphism of its Dynkin diagram. This automorphism acts on the tuple $\bm{s}=(s_{\alpha})_{\alpha \in \Delta^+}$ where $\Delta^+$ denotes the positive roots of $\X_r$. Then we have $\zeta_{\X_r}(\sigma \cdot \bm{s}) = \zeta_{\X_r}(\bm{s})$. In our setting here, $\X_r = \A_r$, and its only non-trivial Dynkin automorphism corresponds to $\sigma : \Pi_r \to \Pi_r, \ \alpha_j \mapsto \alpha_{r+1-j}$, where $\Pi_r$ denotes the set of fundamental roots of $\A_r$. The relation $\zeta_{\A_r}(\sigma \cdot \bm{s}) = \zeta_{\A_r}(\bm{s})$ induced by this non-trivial Dynkin automorphism corresponds to the relation shown in the previous lemma.
\end{remark}

To consider the values of $\zeta_{\A_r}(\bm{s})$ at non-positive integers, we need an analytic continuation to the larger region. 
The following {\it Mellin--Barnes integral formula} is useful for obtaining a recurrence formula for $\zeta_{\A_r}(\bm{s})$.

\begin{lemma}[{\cite[Corollary 14.5.1]{whittakermodernanalysis}}]\label{lem: MB-rec}
Let $s,\lambda\in\CC$ with $\re(s)>0$, $\lambda\ne0$, and $|\arg\lambda|<\pi$, we have 
\begin{equation}\label{eqn: MB formula}
    (1+\lambda)^{-s}=\frac{1}{2\pi \ii}\int_{(c)} \frac{\Gamma(s+z)\Gamma(-z)}{\Gamma(s)}\lambda^zdz
\end{equation}
where $\ii:=\sqrt{-1},$ $-\re(s)<c:=\re(z)<0$, and the symbol $(c)$ represents the path of integration along the vertical line from $c-\ii\infty$ to $c+ \ii\infty$.
\end{lemma}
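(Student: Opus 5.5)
The plan is to prove \eqref{eqn: MB formula} first for real $\lambda>0$ by Mellin inversion of a beta integral. We then extend it to the slit plane $|\arg\lambda|<\pi$ by analytic continuation in $\lambda$. The alternative route, shifting the contour to the right and summing residues of $\Gamma(-z)$ at $z=n\in\NN_0$ to get the binomial series, would only cover $|\lambda|<1$ and would still need continuation. The Mellin route is cleaner.

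\emph{Step 1 (Mellin transform).} Fix $s$ with $\re(s)>0$. For $0<\re(u)<\re(s)$, the substitution $x=t/(1-t)$ turns the Mellin transform into Euler's beta integral:
\begin{align*}
 \int_0^\infty x^{u-1}(1+x)^{-s}\,dx = B(u,s-u)=\frac{\Gamma(u)\Gamma(s-u)}{\Gamma(s)}.
\end{align*}
The function $x\mapsto(1+x)^{-s}$ is smooth on $(0,\infty)$. By Stirling's formula, on the line $\re(u)=c'\in(0,\re(s))$ we have
\begin{align*}
 |\Gamma(u)\Gamma(s-u)|\ll |\im u|^{\re(s)-1}\ee^{-\pi|\im u|},
\end{align*}
so the transform is absolutely integrable on that line. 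The Mellin inversion theorem therefore gives
\begin{align*}
 (1+\lambda)^{-s}=\frac{1}{2\pi\ii}\int_{(c')}\frac{\Gamma(u)\Gamma(s-u)}{\Gamma(s)}\lambda^{-u}\,du \qquad (\lambda>0).
\end{align*}
Substituting $u=-z$ maps the line $(c')$ to $(c)$ with $c=-c'\in(-\re(s),0)$, and the orientation is preserved after accounting for $du=-dz$. This yields \eqref{eqn: MB formula} for $\lambda>0$.

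\emph{Step 2 (continuation in $\lambda$).} Write $\lambda^{z}=\exp\bigl(z(\log|\lambda|+\ii\arg\lambda)\bigr)$ with the principal branch. Then on $\re(z)=c$ we have $|\lambda^z|=|\lambda|^c\ee^{-\arg(\lambda)\im z}$. Combined with the Stirling bound for $\Gamma(s+z)\Gamma(-z)$, the integrand is
\begin{align*}
 O\bigl(|\im z|^{\re(s)-1}\ee^{-(\pi-|\arg\lambda|)|\im z|}\bigr).
\end{align*}
This bound is uniform for $\lambda$ in compact subsets of $\CC\setminus(-\infty,0]$. Hence the integral converges absolutely and locally uniformly, and defines a holomorphic function of $\lambda$ on the slit plane. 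The left side $(1+\lambda)^{-s}$, with the principal branch, is also holomorphic there, since $1+\lambda$ avoids $(-\infty,0]$ when $|\arg\lambda|<\pi$. The two functions agree on $(0,\infty)$, so by the identity theorem they agree on the whole slit plane.

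\emph{Main obstacle.} The only delicate point is the uniform Stirling estimate. It justifies both the inversion step and the holomorphy in $\lambda$. In particular one must check that the exponential decay rate $\pi-|\arg\lambda|$ stays positive, which is exactly the hypothesis $|\arg\lambda|<\pi$. The condition $\lambda\ne0$ is needed so that $\lambda^z$ is defined.
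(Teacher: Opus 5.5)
Your argument is correct, but it takes a different route from the one behind the citation. The paper gives no proof of its own and refers to \cite{whittakermodernanalysis}, where the formula comes from specialising Barnes' integral for the hypergeometric function. The argument there has three steps. First, exactly as in your Step 2, one shows that the integral converges locally uniformly and is holomorphic in $\lambda$ on $|\arg\lambda|<\pi$. Second, for $|\lambda|<1$, one moves the line of integration to the right past the poles $z=n\in\NN_0$ of $\Gamma(-z)$, which contribute the terms $\binom{-s}{n}\lambda^n$. Using $\Gamma(-z)=-\pi/(\sin(\pi z)\Gamma(1+z))$, one shows that the integral over $\re(z)=N+\tfrac12$ tends to $0$, so the integral equals the binomial series. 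Third, analytic continuation finishes.

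You replace the residue computation and this tail estimate by Euler's beta integral plus Mellin (i.e.\ Fourier) inversion. This gives the identity on all of $(0,\infty)$ at once, at the cost of using the inversion theorem as a black box. Your stated reason for preferring the Mellin route does not really separate the two approaches: both need the same continuation step, since each first proves the identity only on a subset of the slit plane.

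What the residue route buys in this paper is that it is exactly the mechanism used later in the proof of Proposition~\ref{prop: recurrence formula for reg val of Ar-zeta}. There the line is shifted to $\re(z)=N+\epsilon$, and the coefficients $\binom{-s_{1r}}{k}$ arise as residues of $\Gamma(-z)$. Your Mellin argument proves the lemma cleanly but does not exhibit that truncated expansion with remainder.
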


We immediately find that $\zeta_{\A_r}$ has the following recurrence formula.

\begin{proposition}[{\cite[\S 6.2]{komori2010introduction}}]\label{prop: rec for Ar-zeta}
For $\bm{s}=(s_{ij})_{1\le i\le j\le r}\in\CC^{\frac{r(r+1)}{2}}$, we have 
{\small
\begin{equation}\label{eqn: rec rel for Ar zeta}
\zeta_{\A_r}
 \left(\begin{smallmatrix}
   s_{11} &s_{12} & \cdots &s_{1r} \\&s_{22} & \cdots &s_{2r} \\& & \ddots &\vdots \\ & & &s_{rr}
       \end{smallmatrix}\right) 
= \left(\frac{1}{2\pi \ii}\right)^{r-1}
  \!\!\int_{(\bm{c})} \!\!\!
    G(\bm{s};\bm{z}) \, \zeta_{\A_{r-1}}(\bm{s}'(\bm{z})) \zeta(s_{rr}-|\bm{z}|) d\bm{z}
\end{equation}}
where $(\bm{c})=(c_1)\times\cdots\times(c_{r-1})$, 
$c_i:=\re(z_i)<0$, $\re(s_{rr})-c_1-\cdots-c_{r-1}>1$, 
$|\bm{z}|:=\sum_{j=1}^{r-1} z_j$, 
$d\bm{z}=dz_1\cdots dz_{r-1}$, 
\begin{align*}
  \bm{s}'(\bm{z})=(s'_{ij}(\bm{z}))_{1\le i\le j\le r-1}
  := \begin{cases}
       s_{ij}              \quad &(1\le i\le j\le r-2),\\ 
       s_{ir-1}+s_{ir}+z_i \quad &(1\le i\le j=r-1),
     \end{cases}
\end{align*}
and 
\begin{align*}
 G(\bm{s};\bm{z}) := \prod_{i=1}^{r-1} \frac{\Gamma(s_{ir}+z_i)\Gamma(-z_i)}{\Gamma(s_{ir})}.
\end{align*}
\end{proposition}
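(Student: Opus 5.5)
The statement to prove is Proposition~\ref{prop: rec for Ar-zeta}. I will derive it from the Mellin--Barnes formula of Lemma~\ref{lem: MB-rec}, working first in the region of absolute convergence and then extending by analytic continuation.

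\textbf{Step 1: isolate the last variable.} Assume every $\re(s_{ij})$ is large. In \eqref{eqn: explicit formula of Ar zeta function}, the factors that contain $m_r$ are $(m_i+\cdots+m_r)^{-s_{ir}}$ for $1\le i\le r$. For $i\le r-1$, write $a_i:=m_i+\cdots+m_{r-1}$. Then
\[
(a_i+m_r)^{-s_{ir}}=a_i^{-s_{ir}}\Bigl(1+\frac{m_r}{a_i}\Bigr)^{-s_{ir}}.
\]
Apply \eqref{eqn: MB formula} with $\lambda=m_r/a_i>0$, so $|\arg\lambda|=0<\pi$. This is done separately for each $i=1,\dots,r-1$, with its own integration variable $z_i$ on $\re(z_i)=c_i\in(-\re(s_{ir}),0)$. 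The factor becomes
\[
\frac{1}{2\pi\ii}\int_{(c_i)}\frac{\Gamma(s_{ir}+z_i)\Gamma(-z_i)}{\Gamma(s_{ir})}\,a_i^{-s_{ir}-z_i}m_r^{z_i}\,dz_i .
\]
The remaining factor with $i=r$ is simply $m_r^{-s_{rr}}$.

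\textbf{Step 2: interchange sum and integrals, then regroup.} I then interchange the sum over $m_1,\dots,m_r$ with the $(r-1)$-fold integral. This is the only delicate point. I will justify it by absolute convergence. On the vertical lines, $|\Gamma(s+z)\Gamma(-z)|$ decays exponentially in $|\im z_i|$ by Stirling's formula. The series $\sum |a_i^{-s_{ir}-z_i}m_r^{z_i}|$ depends only on real parts. It converges provided $\re(s_{rr})-\sum c_i>1$ and the real parts of the shifted exponents $s_{i,r-1}+s_{ir}+c_i$ and of the other $s_{ij}$ are large enough.

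After the interchange, the sum over $m_r$ alone is $\sum_{m_r}m_r^{-(s_{rr}-|\bm z|)}=\zeta(s_{rr}-|\bm z|)$. The sum over $m_1,\dots,m_{r-1}$ collects two kinds of factors. The untouched factors $(m_i+\cdots+m_j)^{-s_{ij}}$ with $j\le r-2$ stay as they are. For $j=r-1$, the factor $a_i^{-s_{i,r-1}}$ combines with $a_i^{-s_{ir}-z_i}$. This is exactly $\zeta_{\A_{r-1}}(\bm s'(\bm z))$ with $s'_{i,r-1}=s_{i,r-1}+s_{ir}+z_i$. The product of the Gamma factors is $G(\bm s;\bm z)$, which gives \eqref{eqn: rec rel for Ar zeta}.

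\textbf{Step 3: extend the domain.} Finally, the integrand is meromorphic in $\bm s$, and the integral converges locally uniformly away from the poles, again by Stirling decay. So the identity holds on the stated region and serves to continue $\zeta_{\A_r}$ inductively from $\zeta_{\A_{r-1}}$ and $\zeta$. The main obstacle is bookkeeping the convergence region for the Fubini step. I would first pick all $c_i$ slightly negative and then require the remaining real parts to be large enough.
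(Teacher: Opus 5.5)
Your proposal is correct and takes the same route as the paper. For each $i\le r-1$ you write $(m_i+\cdots+m_r)^{-s_{ir}}=a_i^{-s_{ir}}(1+m_r/a_i)^{-s_{ir}}$, apply the Mellin--Barnes formula of Lemma~\ref{lem: MB-rec}, and regroup the $m_r$-sum into $\zeta(s_{rr}-|\bm{z}|)$ and the remaining sum into $\zeta_{\A_{r-1}}(\bm{s}'(\bm{z}))$. The paper's proof is only this two-line sketch. Your Stirling/Fubini justification of the interchange fills in details the paper leaves implicit, as does your explicit contour condition $-\re(s_{ir})<c_i<0$, which the statement omits.
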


\begin{proof}
For any $i=1,\dots,r-1$, we can write
\begin{align*}
 (m_i+\cdots+m_r)^{-s_{ir}} 
 = (m_i+\cdots+m_{r-1})^{-s_{ir}} \left(1+\frac{m_{r}}{m_i+\cdots+m_{r-1}}\right)^{-s_{ir}}.
\end{align*}
Thus, applying \eqref{eqn: MB formula} to the series expression of $\zeta_{\A_r}(\bm{s})$, we obtain \eqref{eqn: rec rel for Ar zeta}.
\end{proof}

\begin{remark}
The formula \eqref{eqn: rec rel for Ar zeta} gives us the meromorphic continuation of $\zeta_{\A_r}$. By induction, we find that all non-positive integer points are included in the set of its singularities. 
We can also see that those points are points of indeterminacy. 
In fact, we will see that the limit values at those points are different in general, see Example \ref{example: ordered limit values for A2}.
\end{remark}

\section{Definition of ordered limit values and four particular values}
\label{sec: val of zetaAr at negative}
In this section, we study the values of $\zeta_{\A_r}(\bm{s})$ at non-positive integers.
As we mentioned in the previous section, the non-positive integers belong to the set of (possible) singularities of $\zeta_{\A_r}(\bm{s})$.
Thus, we first define those values as a certain ordered limit and exhibit some examples for $\A_2$. 
In general, since we have $\frac{r(r+1)}{2}!$ ordered limit values for $\A_r$, we do not present examples for general $r$.
Nevertheless, Example \ref{example: ordered limit values for A2} already suggests some properties of ordered limit values. 
In addition, we also define the \emph{Euler--Zagier value} and establish their decomposition formula in terms of Euler--Zagier multiple zeta functions. 
In \S \ref{subsec: Regular values}, we study the \emph{regular values} and prove the recurrence formula, which will be used in the proof of our main Theorem. 
In \S \ref{subsec: Diagonal values and reverse values}, we study the \emph{diagonal values} and the \emph{reverse values}. In particular, we prove an explicit formula for the diagonal values that gives us their vanishing.

We define the \emph{ordered limit values} at non-positive integer points as follows.
First, note that for any $r\in\NN$, there is a bijection $\varphi$ between the following two sets:
\begin{align*}
  \varphi: \Big\{ (i,j)\in\NN^2\,|\,1\le i\le j\le r \Big\} \ \rightarrow\ 
  \left\{ n\in\NN \,|\, 1\le n\le \frac{r(r+1)}{2} \right\},
\end{align*}
\[
    (i,j) \mapsto (i-1)r - \frac{(i-1)i}{2}+j. 
\]
This bijection $\varphi$ orders the pairs $(i,j)$ lexicographically.
\begin{definition}
Let $r\in\NN$, $\bm{\ell}=(\ell_{ij})\in\NN_{0}^{\frac{r(r+1)}{2}}$, and $\mathfrak{S}_{\frac{r(r+1)}{2}}$ be the $\frac{r(r+1)}{2}$-th symmetric group. 
For $w\in \mathfrak{S}_{\frac{r(r+1)}{2}}$, consider a permutation 
$\bar{w}:\{ (i,j)\in\NN^2\,|\,1\le i\le j\le r \}\rightarrow\{ (i,j)\in\NN^2\,|\,1\le i\le j\le r \}$ by
\begin{align*}
    \bar{w}:= \varphi^{-1}\circ w \circ \varphi.
\end{align*}
We consider the $\id$-limit by 
\begin{align*}
    \lim_{\bm{s}\rightarrow-\bm{\ell}}\!\!{}^{\id}
    &:= \lim_{s_{11}\rightarrow-\ell_{11}}
        \lim_{s_{12}\rightarrow-\ell_{12}}\cdots
        \lim_{s_{rr}\rightarrow-\ell_{rr}},
\end{align*}
where we arrange $\lim_{s_{ij}\rightarrow-\ell_{ij}}$ in lexicographic order for $(i,j)$.
Then, we define the ordered limit value by
\begin{align*}
   \zeta_{\A_r}(\overset{w}{-\bm{\ell}})
   := \lim_{\bm{s}\rightarrow-\bm{\ell}}\!\!{}^{w}\, \zeta_{\A_r}(\bm{s}),
\end{align*}
where 
\begin{align*}
    \lim_{\bm{s}\rightarrow-\bm{\ell}}\!\!{}^{w}
    &:= \lim_{s_{\bar{w}((11))}\rightarrow-\ell_{\bar{w}((11))}}
        \lim_{s_{\bar{w}((12))}\rightarrow-\ell_{\bar{w}((12))}}\cdots
        \lim_{s_{\bar{w}((rr))}\rightarrow-\ell_{\bar{w}((rr))}}.
\end{align*}
where we also arrange $\lim_{s_{w(ij)}\rightarrow-\ell_{ij}}$ in lexicographic order for $(i,j)$
\end{definition}

\begin{example}\label{example: ordered limit values for A2}
We consider the case of $\A_2$. Note that we have a bijection 
\begin{align*}
  \varphi:\{ (1,1),\,(1,2),\,(2,2) \} \rightarrow \{ 1,\,2,\,3 \}, \quad
  (1,1) \mapsto 1, \ (1,2) \mapsto 2, \ (2,2) \mapsto 3
\end{align*}
and we have $6$ ordered limit values 
$\zeta_{\A_2}(\overset{w}{\begin{smallmatrix} -\ell_{11} & -\ell_{12} \\ &-\ell_{22} \end{smallmatrix}})$ 
for $w\in\mathfrak{S}_3$. 

\begin{itemize}[leftmargin=1em,label=$\bullet$]
\item
For $\id\in\mathfrak{S}_3$, the value $\zeta_{\A_2}(\overset{\id}{-\bm{\ell}})$ is given by
\begin{align*}
  \zeta_{\A_2}(\overset{\id}{-\bm{\ell}})
  =  \zeta_{\A_2}(\overset{\id}{\begin{smallmatrix} -\ell_{11} & -\ell_{12} \\ &-\ell_{22} \end{smallmatrix}})
  := \lim_{s_{11}\rightarrow-\ell_{11}}\lim_{s_{12}\rightarrow-\ell_{12}}\lim_{s_{22}\rightarrow-\ell_{22}}
      \zeta_{\A_2}(\begin{smallmatrix} s_{11} &s_{12} \\ &s_{22} \end{smallmatrix}).
\end{align*}
Since the series converges when $\re(s_{12}-\ell_{22})>1$, in that case we have 
{\small
\begin{align*}
   \lim_{s_{22}\rightarrow-\ell_{22}} 
   \zeta_{\A_2}(\begin{smallmatrix} s_{11} &s_{12} \\ &s_{22} \end{smallmatrix})
   = \sum_{k_{22}=0}^{\ell_{22}}\binom{\ell_{22}}{k_{22}}(-1)^{k_{22}}
      \sum_{m_1,m_2\in\NN}
         \frac{1}{m_1^{s_{11}-k_{22}}(m_1+m_2)^{s_{12}-\ell_{22}+k_{22}}}    .
\end{align*}}
Note that we use the binomial theorem 
\[
   m_2^{\ell_{22}} = \Big[ (m_1+m_2) - m_1 \Big]^{\ell_{22}} 
                = \sum_{k_{22}=0}^{\ell_{22}}
                    \binom{\ell_{22}}{k_{22}}(-1)^{k_{22}}m_1^{k_{22}}(m_1+m_2)^{\ell_{22}-k_{22}}
\]
and the last double infinite series is the Euler--Zagier double zeta function $\zeta_{2}(s_{11}-k_{22},s_{12}-\ell_{22}+k_{22})$. 
Thus, we have the decomposition 
\begin{equation}\label{eqn: decomp formula for zetaa2}
  \zeta_{\A_2}(\begin{smallmatrix} s_{11} &s_{12} \\ &-\ell_{22} \end{smallmatrix})
  = \sum_{k_{22}=0}^{\ell_{22}}\binom{\ell_{22}}{k_{22}}(-1)^{k_{22}}
      \zeta_{2}(s_{11}-k_{22},s_{12}-\ell_{22}+k_{22}).
\end{equation}
Akiyama, Egami, and Tanigawa \cite{akiyama2001analytic} defined the \emph{regular value} of the Euler--Zagier multiple zeta function as 
\begin{align*}
  \zeta_r^{\rm reg}(-\bm{\ell}) = \zeta_r^{\rm reg}(-\ell_1,\dots,-\ell_r)
 := \lim_{s_1\rightarrow -\ell_1}\cdots \lim_{s_r\rightarrow -\ell_r}\zeta_r(s_1,\dots,s_r).
\end{align*}
Therefore, the value $\zeta_{\A_2}(\overset{\id}{-\bm{\ell}})$ is given by the regular value of the Euler--Zagier double zeta function:
\begin{align*}
    \zeta_{\A_2}(\overset{\id}{-\bm{\ell}})
    = \sum_{k_{22}=0}^{\ell_{22}}\binom{\ell_{22}}{k_{22}}(-1)^{k_{22}}
        \zeta^{\reg}_{2}(-\ell_{11}-k_{22},-\ell_{12}-\ell_{22}+k_{22}).
\end{align*}

\item For $(13)\in\mathfrak{S}_3$, by Lemma \ref{lem: sym of variables}, we have
\begin{align*}
    \zeta_{\A_2}(\overset{(13)}{\begin{smallmatrix}-\ell_{11}&-\ell_{12}\\&-\ell_{22}\end{smallmatrix}}) 
    = \lim_{s_{22}\rightarrow-\ell_{22}}\lim_{s_{12}\rightarrow-\ell_{12}}\lim_{s_{11}\rightarrow-\ell_{11}}
         \zeta_{\A_2}(\begin{smallmatrix} s_{11} &s_{12} \\ &s_{22} \end{smallmatrix})
    =\zeta_{\A_2}(\overset{\id}{\begin{smallmatrix}-\ell_{22}&-\ell_{12}\\&-\ell_{11}\end{smallmatrix}}).
\end{align*}

\item For $(12)\in\mathfrak{S}_3$, Since the decomposition formula \eqref{eqn: decomp formula for zetaa2}
and the limit value 
\begin{align*}
  \zeta_r^{\rm rev}(-\bm{\ell}) = \zeta_r^{\rm rev}(-\ell_1,\dots,-\ell_r)
 := \lim_{s_r\rightarrow -\ell_r}\cdots \lim_{s_1\rightarrow -\ell_1}\zeta_r(s_1,\dots,s_r)
\end{align*}
is called \emph{reverse value} in \cite{akiyama2001analytic},
we have 
\begin{align*}
    \zeta_{\A_2}(\overset{(12)}{-\bm{\ell}})
    = \sum_{k_{22}=0}^{\ell_{22}}\binom{\ell_{22}}{k_{22}}(-1)^{k_{22}}
        \zeta^{\rev}_{2}(-\ell_{11}-k_{22},-\ell_{12}-\ell_{22}+k_{22}).
\end{align*}

\item For $(132)\in\mathfrak{S}_3$, by Lemma \ref{lem: sym of variables}, we have
\begin{align*}
    \zeta_{\A_2}(\overset{(132)}{\begin{smallmatrix}-\ell_{11}&-\ell_{12}\\&-\ell_{22}\end{smallmatrix}}) 
    = \lim_{s_{12}\rightarrow-\ell_{12}}\lim_{s_{22}\rightarrow-\ell_{22}}\lim_{s_{11}\rightarrow-\ell_{11}}
         \zeta_{\A_2}(\begin{smallmatrix} s_{11} &s_{12} \\ &s_{22} \end{smallmatrix}) 
    =\zeta_{\A_2}(\overset{(12)}{\begin{smallmatrix}-\ell_{22}&-\ell_{12}\\&-\ell_{11}\end{smallmatrix}}).
\end{align*}

\item For $(23)\in\mathfrak{S}_3$, since when $\re(s_{11}-\ell_{12})>1$ and $\re(s_{22}-\ell_{12})>1$, it follows
\begin{align*}
    \sum_{m_1,m_2\in\NN} \frac{(m_1+m_2)^{\ell_{12}}}{m_1^{s_{11}}m_2^{s_{22}}}
    = \sum_{k_{12}=0}^{\ell_{12}} \binom{\ell_{12}}{k_{12}}\zeta(s_{11}-\ell_{12}+k_{12})\zeta(s_{22}-k_{12}).
\end{align*}
Thus, we have
\begin{align*}
    \zeta_{\A_2}(\overset{(23)}{-\bm{\ell}})
    &= \lim_{s_{11}\rightarrow-\ell_{11}}\lim_{s_{22}\rightarrow-\ell_{22}}\lim_{s_{12}\rightarrow-\ell_{12}}
         \zeta_{\A_2}(\begin{smallmatrix} s_{11} &s_{12} \\ &s_{22} \end{smallmatrix}) \\
    &= \sum_{k_{12}=0}^{\ell_{12}} \binom{\ell_{12}}{k_{12}}\zeta(-\ell_{11}-\ell_{12}+k_{12})\zeta(-\ell_{22}-k_{12}).
\end{align*}

\item For $(123)\in\mathfrak{S}_3$, by Lemma \ref{lem: sym of variables}, we have
\begin{align*}
    \zeta_{\A_2}(\overset{(123)}{\begin{smallmatrix}-\ell_{11}&-\ell_{12}\\&-\ell_{22}\end{smallmatrix}}) 
    = \lim_{s_{22}\rightarrow-\ell_{22}}\lim_{s_{11}\rightarrow-\ell_{11}}\lim_{s_{12}\rightarrow-\ell_{12}}
         \zeta_{\A_2}(\begin{smallmatrix} s_{11} &s_{12} \\ &s_{22} \end{smallmatrix})
    =\zeta_{\A_2}(\overset{(23)}{\begin{smallmatrix}-\ell_{22}&-\ell_{12}\\&-\ell_{11}\end{smallmatrix}}).
\end{align*}
\end{itemize}
It follows immediately from these calculations that
\begin{align*}
    \zeta_{\A_2}(\overset{\id}{\begin{smallmatrix} -\ell & -\ell_{12} \\ &-\ell \end{smallmatrix}})
    = \zeta_{\A_2}(\overset{(13)}{\begin{smallmatrix} -\ell & -\ell_{12} \\ &-\ell \end{smallmatrix}}),
    \quad 
    \zeta_{\A_2}(\overset{(12)}{\begin{smallmatrix} -\ell & -\ell_{12} \\ &-\ell \end{smallmatrix}})
    = \zeta_{\A_2}(\overset{(132)}{\begin{smallmatrix} -\ell & -\ell_{12} \\ &-\ell \end{smallmatrix}})
        \quad (\ell,\ell_{12}\in\NN_0), 
\end{align*}
and $\zeta_{\A_2}(\overset{(123)}{-\bm{\ell}})=\zeta_{\A_2}(\overset{(23)}{-\bm{\ell}})$ for $\bm{\ell}\in\NN_0^3$. 
Moreover, since we know that (cf. \cite{akiyama2001analytic} and \cite{akiyama2001multiple})
\begin{align*}
    \zeta_2^{\reg}(0,0)=\frac{1}{3}, \quad \zeta_2^{\rev}(0,0)=\frac{5}{12},
\end{align*}
we have the following:
\begin{align*}
  \zeta_{\A_2}(\overset{\id}{\bm{0}})= \zeta_{\A_2}(\overset{(13)}{\bm{0}}) = \frac{1}{3}\ne 
  \zeta_{\A_2}(\overset{(12)}{\bm{0}})= \zeta_{\A_2}(\overset{(132)}{\bm{0}}) = \frac{5}{12}\ne
  \zeta_{\A_2}(\overset{(23)}{\bm{0}})= \zeta_{\A_2}(\overset{(123)}{\bm{0}}) = \frac{1}{4}.
\end{align*}
\end{example}

\begin{remark}
  For any permutation $w$ and any tuple $\bm{\ell}\in\NN_0^{\frac{r(r+1)}{2}}$, the corresponding ordered limit exists by a result of Komori \cite{komori2010integral}. It is natural to expect that some ordered limit values may coincide with each other, and thus their number of different values for a given non-positive integer tuple will be less than $\frac{r(r+1)}{2}!$. 
  We do not study this subject in the present paper, that is, the problem {\it ``when are two ordered limit values equal?''}, but we will investigate it in the forthcoming paper.
\end{remark}

For given $w\in\mathfrak{S}_{\frac{r(r+1)}{2}}$, it is not easy to calculate ordered limit values $\zeta_{\A_r}(\overset{w}{-\bm{\ell}})$ in general, but sometimes the calculation becomes much simpler. To see this, we define the four particular ordered limit values, namely the \emph{Euler--Zagier} values, the \emph{regular} values, the \emph{diagonal} values, and the \emph{reverse} values and study their explicit formula or some formulas that allow us to calculate them inductively.

\subsection{Euler--Zagier values}
In this subsection, we study the Euler--Zagier values, defined as follows.

\begin{definition}
Let $\bm{\ell}=(\ell_{ij})\in\NN_{0}^{\frac{r(r+1)}{2}}$. 
For a permutation $w=w_{EZ}$ of $\{1,\dots,r\}$ (clearly $w_{EZ}\in\mathfrak{S}_{\frac{r(r+1)}{2}}$), 
we call the value $\zeta_{\A_r}(\overset{w_{EZ}}{-\bm{\ell}})$ the \emph{Euler--Zagier value}.
In other words, it is defined as
\begin{align*}
\zeta_{\A_r}(\overset{w_{EZ}}{-\bm{\ell}}):= 
\lim_{\bm{s}\rightarrow-\bm{\ell}}\!\!{}^{w_{EZ}}\, \zeta_{\A_r}^{}(\bm{s})
&= \lim_{s_{\bar{w}(11)}\rightarrow-\ell_{\bar{w}(11)}}
     \cdots\lim_{s_{\bar{w}(1r)}\rightarrow-\ell_{\bar{w}(1r)}}
    \lim_{s_{22}\rightarrow-\ell_{22}}\cdots\lim_{s_{2r}\rightarrow-\ell_{2r}} \\
&\qquad \cdots \lim_{s_{r-1r-1}\rightarrow-\ell_{r-1r-1}}\lim_{s_{r-1\, r}\rightarrow-\ell_{r-1\, r}}
        \lim_{s_{rr}\rightarrow-\ell_{rr}}
          \zeta_{\A_r}^{}(\bm{s}).
\end{align*}
Here, we put $\bar{w}:=\bar{w}_{EZ}$ to save space.
\end{definition}

By definition, we can write
\begin{align*}
\zeta_{\A_r}(\overset{w_{EZ}}{-\bm{\ell}})
&= \lim_{s_{\bar{w}(11)}\rightarrow-\ell_{\bar{w}(11)}}
     \cdots\lim_{s_{\bar{w}(1r)}\rightarrow-\ell_{\bar{w}(1r)}}
   \zeta_{\A_r}
     \left(\begin{smallmatrix}
          s_{11} & s_{12}  & \cdots & s_{1r}  \\
                 & -\ell_{22} & \cdots & -\ell_{2r} \\
                 &         & \ddots & \vdots  \\
                 &         & & -\ell_{rr} 
         \end{smallmatrix}\right)
\end{align*}
and note that for $\zeta_{\A_r}
     \left(\begin{smallmatrix}
          s_{11} & s_{12}  & \cdots & s_{1r}  \\
                 & -\ell_{22} & \cdots & -\ell_{2r} \\
                 &         & \ddots & \vdots  \\
                 &         & & -\ell_{rr} 
         \end{smallmatrix}\right)$, 
the series expression is still valid when $\re(s_{1j})$ ($j=1,\dots,r$) are sufficiently large. 
We also find the following decomposition.         

\begin{lemma}
Let $\bm{\ell}'=(\ell_{ij})_{2\le i\le j\le r}\in\NN_0^{r(r-1)/2}$ and $s_{1j}\in\CC$ except for the singularities.
We have the decomposition
\begin{align*}
 \zeta_{\A_r}
   \left(\begin{smallmatrix}
          s_{11} & s_{12}  & \cdots & s_{1r}  \\
                 & -\ell_{22} & \cdots & -\ell_{2r} \\
                 &         & \ddots & \vdots  \\
                 &         & & -\ell_{rr} 
         \end{smallmatrix}\right)
 = \prod_{2\le i\le j\le r} \sum_{p_{ij}+q_{ij}=\ell_{ij}}  
     \binom{\ell_{ij}}{q_{ij}} (-1)^{q_{ij}} \zeta_{r}(\bm{s}(\bm{p},\bm{q})),
\end{align*}
where $\zeta_{r}(s_1\dots,s_r)$ is the Euler--Zagier multiple zeta function and
\begin{align*}
&\bm{s}(\bm{p},\bm{q})
= \left(  s_{1k}-\sum_{i=2}^{k}p_{ik}-\sum_{j=k+1}^{r}q_{k+1\,j}  \right)_{1\le k\le r}.
\end{align*}
We consider the empty sum as $0$.
\end{lemma}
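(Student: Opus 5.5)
The plan is to prove the identity first in a region of absolute convergence, as an identity of Dirichlet series, and then extend it by analytic continuation in the variables $s_{11},\dots,s_{1r}$.

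\emph{Setup.} Write $M_k:=m_1+\cdots+m_k$ for $1\le k\le r$, with $M_0:=0$. Then for every $1\le i\le j\le r$,
\[
m_i+\cdots+m_j=M_j-M_{i-1}.
\]
Take $\re(s_{1k})$ ($1\le k\le r$) sufficiently large, with the remaining variables $s_{ij}$ ($i\ge2$) in a small neighbourhood of $-\ell_{ij}$. In this region the series \eqref{eqn: explicit formula of Ar zeta function} converges absolutely and locally uniformly. The reason is that every factor with $i\ge2$ is bounded by a fixed power of $M_r$, while $\prod_k M_k^{-\re s_{1k}}$ decays as fast as needed. Hence $\zeta_{\A_r}$ is holomorphic there, and substituting $s_{ij}=-\ell_{ij}$ for $i\ge2$ is legitimate inside the series. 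So the left-hand side is given by
\[
\sum_{m_1,\dots,m_r\in\NN}\ \prod_{k=1}^r M_k^{-s_{1k}}\prod_{2\le i\le j\le r}(M_j-M_{i-1})^{\ell_{ij}} .
\]

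\emph{Expansion.} Expand each factor with $i\ge2$ by the binomial theorem:
\[
(M_j-M_{i-1})^{\ell_{ij}}=\sum_{p_{ij}+q_{ij}=\ell_{ij}}\binom{\ell_{ij}}{q_{ij}}(-1)^{q_{ij}}M_j^{p_{ij}}M_{i-1}^{q_{ij}} .
\]
Here $i\ge2$ ensures $M_{i-1}$ is a genuine partial sum. Multiplying out the finite product and collecting powers of each $M_k$, the exponent of $M_k$ is
\[
-s_{1k}+\sum_{i=2}^{k}p_{ik}+\sum_{j=k+1}^{r}q_{k+1\,j}.
\]
The $p$-contributions come from pairs with $j=k$. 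The $q$-contributions come from pairs with $i-1=k$, i.e.\ $i=k+1$ and $j\ge k+1$.

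\emph{Summation.} Interchange the finite sum with the absolutely convergent series. Each resulting series is
\[
\sum_{m}\prod_k M_k^{-(\bm{s}(\bm{p},\bm{q}))_k}=\zeta_r(\bm{s}(\bm{p},\bm{q})),
\]
which converges for $\re(s_{1k})$ large. This gives the stated formula in that region. The book-keeping of which $p$'s and $q$'s land on which $M_k$ is the only place requiring care, including the empty-sum conventions at $k=1$ and $k=r$.

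\emph{Continuation.} Both sides are meromorphic functions of $(s_{11},\dots,s_{1r})$. The right-hand side is a finite combination of Euler--Zagier multiple zeta functions, which continue meromorphically. The left-hand side is meromorphic because Proposition \ref{prop: rec for Ar-zeta} gives the continuation of $\zeta_{\A_r}$. The two sides agree on an open set, so they agree everywhere off the singularities.

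The main obstacle is justifying that the specialization at $s_{ij}=-\ell_{ij}$ ($i\ge2$) of the meromorphically continued $\zeta_{\A_r}$ is the same function of $(s_{1k})_k$ as the series. In particular, one must make sure the set $\{s_{ij}=-\ell_{ij},\ i\ge 2\}$ is not contained in the singular locus. The uniform-convergence argument above resolves this, since for large $\re(s_{1k})$ the function is genuinely holomorphic near these points.
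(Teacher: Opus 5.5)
Your proof is correct and takes the same route as the paper. Both write $m_i+\cdots+m_j=M_j-M_{i-1}$, expand binomially, identify each term with an Euler--Zagier series where the series converges absolutely, and then extend by analytic continuation. Your exponent bookkeeping reproduces $\bm{s}(\bm{p},\bm{q})$ exactly, including the empty sums at $k=1$ and $k=r$. Your argument that the series is holomorphic near $s_{ij}=-\ell_{ij}$ ($i\ge2$) for large $\re(s_{1k})$ justifies the specialization explicitly, which the paper covers only with a one-line appeal to analytic continuation.
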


\begin{proof}
Use the formula
\begin{align*}
   (m_i+\cdots+m_j)^{\ell_{ij}}
    &= \Big[ (m_1+\cdots+m_j) - (m_1+\cdots+m_{i-1}) \Big]^{\ell_{ij}}\\
    &= \sum_{p_{ij}+q_{ij}=\ell_{ij}} \binom{\ell_{ij}}{q_{ij}}(-1)^{q_{ij}} 
           (m_1+\cdots+m_{j})^{p_{ij}} (m_1+\cdots+m_{i-1})^{q_{ij}}
\end{align*}
and we obtain the equation for $\re(s_{1j}+\cdots+s_{1r})>\wt(\bm{\ell}')+r-j+1$ for $1\le j\le r$. 
We know that the functions on both sides have analytic continuations, and this completes the proof. 
\end{proof}

\begin{remark}
For a permutation $w$ of $\{1,\dots,r\}$, the limit values of Euler--Zagier multiple zeta function
\begin{align*}
    \lim_{s_{w(1)}\rightarrow-\ell_{w(1)}}\cdots\lim_{s_{w(r)}\rightarrow-\ell_{w(r)}}
    \zeta_r(s_1,\dots,s_r)
\end{align*}
have already been studied by several authors \cite{akiyama2001multiple}, \cite{sasaki2009multiple}, \cite{komori2010integral}, etc. 
Thus, we have  
\begin{align*}
  \zeta_{\A_r}\overset{w_{EZ}}{(-\bm{\ell})}
  = 
      \prod_{2\le i\le j\le r} \sum_{p_{ij}+q_{ij}=\ell_{ij}}  
        \binom{\ell_{ij}}{q_{ij}} (-1)^{q_{ij}} \zeta_{r}( \overset{w_{EZ}}{\bm{s}(\bm{p},\bm{q})} )
\end{align*}
and we may say that the Euler--Zagier values are well-understood.
\end{remark}

\subsection{Regular values}
\label{subsec: Regular values}
In this subsection, we study the \emph{regular} values of $\zeta_{\A_r}(\bm{s})$ at non-positive integer points. 
We prove the recurrence formula (see Proposition \ref{prop: recurrence formula for reg val of Ar-zeta}) which allows us to calculate them inductively.

We define the \emph{regular values} using a modified ordered limit.
\begin{definition}
Let $\bm{\ell}=(\ell_{ij})\in\NN_{0}^{\frac{r(r+1)}{2}}$. 
Consider the bijection $\bar{\varphi}:\{ (i,j)\in\NN^2\,|\,1\le i\le j\le r \}\rightarrow\{ n\in\NN \,|\, 1\le n\le \frac{r(r+1)}{2} \}$ defined by 
\begin{align*}
\bar{\varphi}: (i,j) \mapsto \frac{(j-1)j}{2}+i.
\end{align*}
For $\reg := \bar{\varphi}^{-1} \circ \id \circ \bar{\varphi}$ (abuse of notation), we set
\begin{align*}
  \lim_{\bm{s}\rightarrow-\bm{\ell}}\!\!{}^{\reg} 
  := \lim_{s_{\reg((11))}\rightarrow-\ell_{\reg((11))}}
     \lim_{s_{\reg((12))}\rightarrow-\ell_{\reg((12))}}\cdots
     \lim_{s_{\reg((rr))}\rightarrow-\ell_{\reg((rr))}}.
\end{align*}
We define the \emph{regular value} as 
\begin{align*}
    \zeta_{\A_r}(\overset{\reg}{-\bm{\ell}})
    := \lim_{\bm{s}\rightarrow-\bm{\ell}}\!\!{}^{\reg}\,
         \zeta_{\A_r}(\bm{s}).
\end{align*}
\end{definition}

\begin{example}
Let $\bm{\ell}\in\NN_0^{3}$. By definition, we have 
\begin{align*}
  \zeta_{\A_2}\overset{\reg}{ (\begin{smallmatrix} -\ell_{11} & -\ell_{12} \\ & -\ell_{22} \end{smallmatrix}) }
  = \lim_{s_{11}\rightarrow-\ell_{11}}\lim_{s_{12}\rightarrow-\ell_{12}}\lim_{s_{22}\rightarrow-\ell_{22}}
      \zeta_{\A_2}(\begin{smallmatrix} s_{11} & s_{12} \\ & s_{22} \end{smallmatrix}).
\end{align*}
Thus, for $\A_2$, we have $\zeta_{\A_2}(\overset{\reg}{-\bm{\ell}})=\zeta_{\A_2}(\overset{\id}{-\bm{\ell}})$. 
\end{example}

Now, we prove the recurrence formula for the regular values.

\begin{proposition}\label{prop: recurrence formula for reg val of Ar-zeta}
For $r\ge3$ and $\bm{\ell}\in\NN_0^{\frac{r(r+1)}{2}}$, we have 
\begin{equation}\label{eqn: rec. rel. for reg Ar zeta}\begin{split}
\zeta_{\A_r}(\overset{\reg}{-\bm{\ell}})
= &\prod_{j=1}^{r-1}\ 
   \sum_{p_j+q_j=\ell_{jr}}
   \binom{\ell_{jr}}{q_j}
     \zeta_{\A_{r-1}}(\overset{\reg}{-\bm{\ell}'(\bm{p})})
     \zeta(-\ell_{rr}-|\bm{q}|) \\
  & + \prod_{j=2}^{r-1}
      \sum_{p_j+q_j=\ell_{jr}}
        \binom{\ell_{jr}}{q_j}
        \frac{ (-1)^{\ell_{rr}+|\bm{q}'|+1} }{\ell_{1r}+\ell_{rr}+|\bm{q}'|+1}
        \binom{\ell_{1r}+\ell_{rr}+|\bm{q}'|}{\ell_{1r}}^{-1}
        \zeta_{\A_{r-1}}(\overset{\reg}{-\bm{\ell}''(\bm{p})})
\end{split}\end{equation}
where 
\begin{align*}
-\bm{\ell}'(\bm{p}) = (-\ell'_{ij}(\bm{p}))_{1\le i\le j\le r-1}
= \begin{cases}
    -\ell_{ij} \quad &(1\le i\le j\le r-2), \\
    -\ell_{i\,r-1}-p_i \quad &(1\le i\le r-1),
  \end{cases}
\end{align*}
$|\bm{q}|:=q_1+\cdots+q_{r-1}$, $|\bm{q}'|:=q_2+\cdots+q_{r-1}$, and 
\begin{align*}
-\bm{\ell}''(\bm{p}) = (-\ell''_{ij}(\bm{p}))_{1\le i\le j\le r-1}
= \begin{cases}
    -\ell_{ij} \quad &(1\le i\le j\le r-2), \\ 
    -\ell_{1\,r-1}-\ell_{1r}-\ell_{rr}-|\bm{q}'|-1 \quad &(i=1,\,j=r-1), \\
    -\ell_{i\,r-1}-p_i \quad &(2\le i\le r-1).
  \end{cases}
\end{align*} 
\end{proposition}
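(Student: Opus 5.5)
The plan is to apply the Mellin--Barnes recurrence of Proposition \ref{prop: rec for Ar-zeta} and evaluate the integral by shifting contours to the right. The regular order of limits processes the last column innermost. First $s_{rr}\to-\ell_{rr}$, then $s_{r-1\,r},\dots,s_{2r}$, then $s_{1r}$, and only afterwards the limits over columns $1,\dots,r-1$. Those remaining limits are, by definition, exactly the regular limit of $\zeta_{\A_{r-1}}$. So it suffices to compute
\[
\lim_{s_{1r}\to-\ell_{1r}}\lim_{s_{2r}\to-\ell_{2r}}\cdots\lim_{s_{rr}\to-\ell_{rr}}\zeta_{\A_r}(\bm{s})
\]
with the other variables generic. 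The goal is to express it as a finite combination of values $\zeta_{\A_{r-1}}(\bm{s}'(\bm{k}))$, where $\bm{s}'(\bm{k})$ denotes $\bm{s}'(\bm{z})$ evaluated at integer (or shifted) points $\bm{z}=\bm{k}$. We then let the $\zeta_{\A_{r-1}}$ variables tend to $-\bm{\ell}'(\bm{p})$ or $-\bm{\ell}''(\bm{p})$ in regular order.

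First, starting from \eqref{eqn: rec rel for Ar zeta}, treat the variables $z_{r-1},\dots,z_2$ one at a time. Shift the contour of $z_j$ to the right past $\re z_j=N$ for $N$ large. The poles crossed are those of $\Gamma(-z_j)$ at $z_j=k\in\NN_0$, with residue $(-1)^k\Gamma(s_{jr}+k)/(k!\,\Gamma(s_{jr}))=\binom{-s_{jr}}{k}$. In the remaining integral the factor $1/\Gamma(s_{jr})$ survives, so after $s_{jr}\to-\ell_{jr}$ that integral vanishes. The residues reduce to $\binom{\ell_{jr}}{k}$ (up to the sign conventions) for $k\le\ell_{jr}$ and vanish for $k>\ell_{jr}$. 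Writing $k=q_j$ and $p_j=\ell_{jr}-q_j$ produces the sums over $p_j+q_j=\ell_{jr}$ and the shift $s_{j\,r-1}+s_{jr}+q_j\to-\ell_{j\,r-1}-p_j$.

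One caveat: at this stage $s_{rr}$ has already been set to $-\ell_{rr}$, so I must check that shifting $z_j$ for $j\ge2$ does not cross the pole of $\zeta(s_{rr}-|\bm{z}|)$. I would arrange this by keeping $\re z_1$ very negative. Then $\re(s_{rr}-|\bm{z}|)>1$ persists while the $z_j$ ($j\ge2$) move right by a bounded amount.

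Finally, handle $z_1$, keeping $s_{1r}$ generic until the very end. Shifting $z_1$ to the right crosses the integer poles of $\Gamma(-z_1)$, which again give the binomial terms. Together with the previous step these produce the first line of \eqref{eqn: rec. rel. for reg Ar zeta}, with the factor $\zeta(-\ell_{rr}-|\bm{q}|)$. The shift also crosses the simple pole of $\zeta(s_{rr}-|\bm{z}|)$ at $z_1=s_{rr}-1-|\bm{q}'|$. Its residue is
\[
-\frac{\Gamma(s_{1r}+s_{rr}-1-|\bm{q}'|)\,\Gamma(1-s_{rr}+|\bm{q}'|)}{\Gamma(s_{1r})}\,
\zeta_{\A_{r-1}}(\cdots,\,s_{1\,r-1}+s_{1r}+s_{rr}-1-|\bm{q}'|,\cdots).
\]
Setting $s_{rr}=-\ell_{rr}$ and then letting $s_{1r}\to-\ell_{1r}$, the ratio $\Gamma(-\ell_{1r}-\ell_{rr}-|\bm{q}'|-1+\varepsilon)/\Gamma(-\ell_{1r}+\varepsilon)$ tends to a ratio of residues of $\Gamma$, namely $(-1)^{\ell_{rr}+|\bm{q}'|+1}\ell_{1r}!/(\ell_{1r}+\ell_{rr}+|\bm{q}'|+1)!$. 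Multiplying by $(\ell_{rr}+|\bm{q}'|)!$ gives exactly the coefficient
\[
\frac{(-1)^{\ell_{rr}+|\bm{q}'|+1}}{\ell_{1r}+\ell_{rr}+|\bm{q}'|+1}\binom{\ell_{1r}+\ell_{rr}+|\bm{q}'|}{\ell_{1r}}^{-1}
\]
together with the argument $-\bm{\ell}''(\bm{p})$. This is the second line of \eqref{eqn: rec. rel. for reg Ar zeta}.

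The main obstacle is analytic justification rather than algebra. I must show that the shifted integrals converge, using Stirling bounds on the $\Gamma$ factors against polynomial growth of $\zeta_{\A_{r-1}}$ and $\zeta$ in vertical strips. I must also show that they are holomorphic in the relevant $s$-variables near the limit points, so that the vanishing $1/\Gamma(s_{jr})\to0$ really kills them. Finally, the order of limits has to be respected, in particular taking $s_{rr}$ first and $s_{1r}$ last, which is precisely why only $z_1$ picks up the $\zeta$-pole term. I would first establish a uniform polynomial vertical growth bound for $\zeta_{\A_{r-1}}$ by induction from \eqref{eqn: rec rel for Ar zeta}, and then apply dominated convergence to each remainder integral.
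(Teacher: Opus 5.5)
Your proposal is correct. On the variable $s_{1r}$ it does exactly what the paper does:
\begin{itemize}
\item a Mellin--Barnes integral in the variable attached to $s_{1r}$;
\item a contour shift to the right, crossing the integer poles of $\Gamma(-z)$ and the single pole of $\zeta(-\ell_{rr}-|\bm{q}'|-z)$ at $z=-\ell_{rr}-|\bm{q}'|-1$;
\item the factor $1/\Gamma(s_{1r})$ killing the remainder integral as $s_{1r}\to-\ell_{1r}$;
\item the ratio of residues of $\Gamma$ producing the coefficient in the second line.
\end{itemize}

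The real difference is how you handle $s_{2r},\dots,s_{r-1\,r}$. You keep the full $(r-1)$-fold integral of Proposition~\ref{prop: rec for Ar-zeta} and recover the coefficients $\binom{\ell_{jr}}{q_j}$ as residues of $\Gamma(-z_j)$.

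The paper instead notes that once $s_{2r},\dots,s_{rr}$ are non-positive integers and $\re(s_{1r})$ is large, the series still converges. It therefore expands $(m_j+\cdots+m_r)^{\ell_{jr}}=\bigl((m_j+\cdots+m_{r-1})+m_r\bigr)^{\ell_{jr}}$ by the binomial theorem directly in the series. This leaves functions $\zeta_{\A_r}$ whose last column is $s_{1r},0,\dots,0,-\ell_{rr}-|\bm{q}'|$, so only a one-variable Mellin--Barnes integral is needed.

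Both routes give the same terms. The paper's shortcut buys economy: it does not need $\re(s_{jr})>0$ for the multivariable representation, nor successive analytic continuations in each $s_{jr}$. Nor does it need your check that the $z_j$-shifts ($j\ge2$) miss the pole of $\zeta(s_{rr}-|\bm{z}|)$, which you handle correctly by taking $\re z_1$ very negative.

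You can also drop the polynomial vertical growth bound for $\zeta_{\A_{r-1}}$ that you plan to prove. Keep the variables $s_{ij}$ with $j\le r-1$ in the region of absolute convergence while computing the inner limits, and continue analytically afterwards; this is legitimate because those limits are the outer ones. Then shifting contours to the right only increases $\re s'_{i\,r-1}$, so $\zeta_{\A_{r-1}}(\bm{s}'(\bm{z}))$ stays absolutely convergent and bounded on every contour. The exponential decay of the $\Gamma$ factors then suffices.
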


\begin{proof}
When $\re(s_{1r})>1+\sum_{j=2}^{r-1}\ell_{jr}$, we have
\begin{align*}
\zeta_{\A_r}\left(\begin{smallmatrix}
                      s_{11} &s_{12}  &\cdots &s_{1\,r-1} &s_{1r} \\
                             &s_{22}  &\cdots &s_{2\,r-1}&-\ell_{2r} \\
                             &        & \ddots &\vdots &\vdots \\
                             &        &        &s_{r-1\,r-1}   &-\ell_{r-1r} \\
                             &        &        &       &-\ell_{rr}
                      \end{smallmatrix}\right) 
= \prod_{j=2}^{r-1}\ 
   \sum_{p_j+q_j=\ell_{jr}}
   \binom{\ell_{jr}}{p_{j}}
   \zeta_{\A_r}
     \left(\begin{smallmatrix}
             s_{11} &s_{12}  &\cdots &s_{1\,r-1}   &s_{1r} \\
                    &s_{22}  &\cdots &s_{2\,r-1}-p_2   &0 \\
                    &        &\ddots &\vdots       &\vdots \\
                    &        &       &s_{r-1\,r-1}-p_{r-1} &0 \\
                    &        &       &             &-\ell_{rr}-|\bm{q'}|
           \end{smallmatrix}\right)
\end{align*}
where $|\bm{q'}|:=q_2+\cdots+q_{r-1}$.
By the Mellin--Barnes integral formula \eqref{eqn: MB formula}, we have 
\begin{align*}
 &\zeta_{\A_r}
     \left(\begin{smallmatrix}
             s_{11} &s_{12}  &\cdots &s_{1\,r-1}   &s_{1r} \\
                    &s_{22}  &\cdots &s_{2\,r-1}-p_2   &0 \\
                    &        &\ddots &\vdots       &\vdots \\
                    &        &       &s_{r-1\,r-1}-p_{r-1} &0 \\
                    &        &       &             &-\ell_{rr}-|\bm{q'}|
           \end{smallmatrix}\right) \\
&= \frac{1}{2\pi \ii}\int_{(c)}
   \frac{\Gamma(s_{1r}+z)\Gamma(-z)}{\Gamma(s_{1r})}
   \zeta_{\A_{r-1}}
     \left(\begin{smallmatrix}
             s_{11} &s_{12}  &\cdots &s_{1\,r-1}+s_{1r}+z   \\
                    &s_{22}  &\cdots &s_{2\,r-1}-p_2        \\
                    &        &\ddots &\vdots                \\
                    &        &       &s_{r-1\,r-1}-p_{r-1}  \\
                    &        &       &                      
           \end{smallmatrix}\right) 
   \zeta(-\ell_{rr}-|\bm{q'}|-z)dz.
\end{align*}
Here, $c:=\re(z)<-\ell_{rr}-|\bm{q'}|-1$. 
By shifting the integration to the right, we obtain 
\begin{align*}
 &\zeta_{\A_r}
     \left(\begin{smallmatrix}
             s_{11} &s_{12}  &\cdots &s_{1\,r-1}   &s_{1r} \\
                    &s_{22}  &\cdots &s_{2\,r-1}-p_2   &0 \\
                    &        &\ddots &\vdots       &\vdots \\
                    &        &       &s_{r-1\,r-1}-p_{r-1} &0 \\
                    &        &       &             &-\ell_{rr}-|\bm{q'}|
           \end{smallmatrix}\right) \\
&= \sum_{k=0}^{N}
   \binom{-s_{1r}}{k}
   \zeta_{\A_{r-1}}
     \left(\begin{smallmatrix}
             s_{11} &s_{12}  &\cdots &s_{1\,r-1}+s_{1r}+k   \\
                    &s_{22}  &\cdots &s_{2\,r-1}-p_2        \\
                    &        &\ddots &\vdots                \\
                    &        &       &s_{r-1\,r-1}-p_{r-1}  \\
                    &        &       &                      
           \end{smallmatrix}\right) 
   \zeta(-\ell_{rr}-|\bm{q'}|-k) \\
&\quad 
 + \frac{\Gamma(s_{1r}-\ell_{rr}-|\bm{q'}|-1)\Gamma(\ell_{rr}+|\bm{q'}|+1)}
        {\Gamma(s_{1r})} 
   \zeta_{\A_{r-1}}
     \left(\begin{smallmatrix}
             s_{11} &s_{12}  &\cdots &s_{1\,r-1}+s_{1r}-\ell_{rr}-|\bm{q'}|-1   \\
                    &s_{22}  &\cdots &s_{2\,r-1}-p_2        \\
                    &        &\ddots &\vdots                \\
                    &        &       &s_{r-1\,r-1}-p_{r-1}  \\
                    &        &       &                      
           \end{smallmatrix}\right) \\
& + I_{N+\epsilon}(\bm{s},\bm{p},\bm{q}),  
\end{align*}
where $N\in\NN$, $\epsilon>0$ (sufficiently small), and 
\begin{align*}
I_{N+\epsilon}(\bm{s},\bm{p},\bm{q})
= \frac{1}{2\pi \ii}\int_{(N+\epsilon)}
   \frac{\Gamma(s_{1r}+z)\Gamma(-z)}{\Gamma(s_{1r})}
   \zeta_{\A_{r-1}}
     \left(\begin{smallmatrix}
             s_{11} &s_{12}  &\cdots &s_{1\,r-1}+s_{1r}+z   \\
                    &s_{22}  &\cdots &s_{2\,r-1}-p_2        \\
                    &        &\ddots &\vdots                \\
                    &        &       &s_{r-1\,r-1}-p_{r-1}  \\
                    &        &       &                      
           \end{smallmatrix}\right) 
   \zeta(-\ell_{rr}-|\bm{q'}|-z)dz.
\end{align*}
Since $\lim_{s_{1r}\rightarrow-\ell_{1r}}I_{N+\epsilon}(\bm{s},\bm{p},\bm{q})=0$ for $\ell_{1r}\in\NN_0$, we set $N=\ell_{1r}$ and get  
\begin{align*}
&\lim_{s_{1r}\rightarrow-\ell_{1r}}
\zeta_{\A_r}
     \left(\begin{smallmatrix}
             s_{11} &s_{12}  &\cdots &s_{1\,r-1}   &s_{1r} \\
                    &s_{22}  &\cdots &s_{2\,r-1}-p_2   &0 \\
                    &        &\ddots &\vdots       &\vdots \\
                    &        &       &s_{r-1\,r-1}-p_{r-1} &0 \\
                    &        &       &             &-\ell_{rr}-|\bm{q'}|
           \end{smallmatrix}\right) \\
&= \sum_{p_1+q_1=\ell_{1r}}
   \binom{\ell_{1r}}{p_1}
   \zeta_{\A_{r-1}}
     \left(\begin{smallmatrix}
             s_{11} &s_{12}  &\cdots &s_{1\,r-1}-p_1   \\
                    &s_{22}  &\cdots &s_{2\,r-1}-p_2        \\
                    &        &\ddots &\vdots                \\
                    &        &       &s_{r-1\,r-1}-p_{r-1}  \\
                    &        &       &                      
           \end{smallmatrix}\right) 
   \zeta(-\ell_{rr}-|\bm{q}|) \\
&\quad 
 + \frac{ (-1)^{\ell_{rr}+|\bm{q}|+1} }{\ell_{1r}+\ell_{rr}+|\bm{q}|+1}
   \binom{\ell_{1r}+\ell_{rr}+|\bm{q}|}{\ell_{1r}}^{-1} 
   \zeta_{\A_{r-1}}
     \left(\begin{smallmatrix}
             s_{11} &s_{12}  &\cdots &s_{1\,r-1}-\ell_{1r}-\ell_{rr}-|\bm{q'}|-1   \\
                    &s_{22}  &\cdots &s_{2\,r-1}-p_2        \\
                    &        &\ddots &\vdots                \\
                    &        &       &s_{r-1\,r-1}-p_{r-1}  \\
                    &        &       &                      
           \end{smallmatrix}\right)
\end{align*}
Thus, we obtain the claim by taking the remaining limits.
\end{proof}

Here, we give a recurrence formula for $A_2$.

\begin{example}
By the same method of \eqref{eqn: rec. rel. for reg Ar zeta}, we have
\begin{equation}\label{eqn: example of reg rec rel for A2}\begin{split}
 \zeta_{\A_2} 
   \overset{\reg}{\left(\begin{smallmatrix}-\ell_{11} &-\ell_{12} \\ &-\ell_{22} \end{smallmatrix}\right)}
 = &\sum_{p_{12}+q_{12}=\ell_{12} }
   \binom{\ell_{12}}{q_{12}}
    \zeta(-\ell_{11}-p_{12})\zeta(-\ell_{22}-q_{12}) \\
    &+ \frac{(-1)^{\ell_{22}+1}}{\ell_{12}+\ell_{22}+1}
      \binom{\ell_{12}+\ell_{22}}{\ell_{22}}^{-1}
      \zeta(-\ell_{11}-\ell_{12}-\ell_{22}-1).
\end{split}\end{equation}
One can prove the vanishing of $\zeta_{\A_2}\overset{\reg}{(-\bm{\ell})}$ directly from this recurrence relation. 
Indeed, we find that the right-hand side vanishes when $\ell_{11},\ell_{22}>0$ and $\ell_{11}+\ell_{12}+\ell_{22}$ is odd. 
\end{example}

\begin{remark}
By \eqref{eqn: rec. rel. for reg Ar zeta}, one can see that $\zeta_{\A_r}\overset{\reg}{(-\bm{\ell})}$ coincides with the regular value of the Euler--Zagier multiple zeta function $\zeta^{\rm reg}_{r}(-\bm{\ell})$ when
\begin{align*}
 -\bm{\ell}=(-\ell_{ij})_{1\le i\le j\le r}
 =\begin{cases}
    -\ell_j \quad &(i=1,\ 1\le j\le r), \\
    0 \quad &(otherwise).
  \end{cases}
\end{align*}
In this case, equation \eqref{eqn: rec. rel. for reg Ar zeta} recovers the recurrence formula of the Euler--Zagier multiple zeta functions.
\end{remark}

\subsection{Diagonal values and reverse values}
\label{subsec: Diagonal values and reverse values}
In this section, we study other two types of ordered limit values, the \emph{diagonal} values and the \emph{reverse} values.
We will give an explicit formula for the diagonal values (Proposition \ref{prop: explicit formula for diag val}) and prove their vanishing (Theorem \ref{thm: vanishing of diag val}).
We then study a decomposition formula of the reverse values in terms of Euler--Zagier reverse values (Proposition \ref{prop: explicit formula for rev val}).

\begin{definition}
For $\bm{\ell}\in\NN_0^{\frac{r(r+1)}{2}}$, we define the \emph{diagonal value} of the zeta functions of $\A_r$ by
\begin{align*}
 \zeta_{\A_r}\overset{\diag}{(-\bm{\ell})}
 := \lim_{s_{ii}\rightarrow-\ell_{ii}} \zeta_{\A_r}(\bm{s};-\bm{\ell})
 := \lim_{s_{ii}\rightarrow-\ell_{ii}}
    \zeta_{\A_r}\left(\begin{smallmatrix}
                      s_{11} &-\ell_{12} &-\ell_{13} &\cdots &-\ell_{1r} \\
                             &s_{22}  &-\ell_{23} &\cdots &-\ell_{2r} \\
                             &        & \ddots &\ddots &\vdots \\
                             &        &        &s_{r-1\,r-1}   &-\ell_{r-1r} \\
                             &        &        &       &s_{rr}
                      \end{smallmatrix}\right).
\end{align*}

We also define the \emph{reverse value} of the zeta functions of $\A_r$ by
\begin{align*}
 \zeta_{\A_r}\overset{\rev}{(-\bm{\ell})}
 := \lim_{s_{1r}\rightarrow-\ell_{1r}} \zeta_{\A_r}(-\bm{\ell};s_{1r})
 := \lim_{s_{1r}\rightarrow-\ell_{1r}}
    \zeta_{\A_r}\left(\begin{smallmatrix}
                      -\ell_{11} &-\ell_{12} &-\ell_{13} &\cdots &s_{1r} \\
                             &-\ell_{22}  &-\ell_{23} &\cdots &-\ell_{2r} \\
                             &        & \ddots &\ddots &\vdots \\
                             &        &        &-\ell_{r-1\,r-1}   &-\ell_{r-1r} \\
                             &        &        &       &-\ell_{rr}
                      \end{smallmatrix}\right).
\end{align*}
\end{definition}

\begin{remark}
Note that if $\re(s_{11}),\dots,\re(s_{rr})$ are sufficiently large, the series expression \eqref{eqn: explicit formula of Ar zeta function} of 
$\zeta_{\A_r}\left(\begin{smallmatrix}
                      s_{11} &-\ell_{12} &-\ell_{13} &\cdots &-\ell_{1r} \\
                             &s_{22}  &-\ell_{23} &\cdots &-\ell_{2r} \\
                             &        & \ddots &\ddots &\vdots \\
                             &        &        &s_{r-1\,r-1}   &-\ell_{r-1r} \\
                             &        &        &       &s_{rr}
                      \end{smallmatrix}\right)$ 
is still valid. Note that the series expression also holds for $\zeta_{\A_r}(-\bm{\ell};s_{1r})$ when $\re(s_{1r})$ is large.
By Lemma \ref{decomp formula for the reg zetaAr}, the diagonal values do not depend on the order of limits $s_{ii}\rightarrow-\ell_{ii}$ for $i=1,\dots,r$.
\end{remark} 

\begin{lemma}\label{decomp formula for the reg zetaAr}
For $\bm{\ell}\in\NN_0^{r(r-1)/2}$ and $s_{ii}\in\CC$ except for their singularities, we have
\begin{align*}
 \zeta_{\A_r}(\bm{s};-\bm{\ell})
 = \prod_{1\le i<j \le r}
   \sum_{\substack{ k^{(ij)}_i+\cdots+k^{(ij)}_{j}=\ell_{ij} \\ \quad k^{(ij)}_p\ge0 }} \!\!\!\!
   \binom{\ell_{ij}}{k^{(ij)}_i,\dots,k^{(ij)}_{j}} 
   \prod_{i=1}^r\zeta(s_{ii}-K_i) 
\end{align*}
where $\binom{\ell_{ij}}{k^{(ij)}_i,\dots,k^{(ij)}_{j}}=\frac{\ell_{ij}!}{k^{(ij)}_i!\cdots k^{(ij)}_{j}!}$ are the multinomial coefficients and $K_i$ ($i=1,\dots,r$) are given by \eqref{eqn: definition of Ki} below.
\end{lemma}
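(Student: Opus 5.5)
We will prove the identity first in the region of absolute convergence and then extend it by analytic continuation, as in the decomposition lemma for the Euler--Zagier values. Fix $\bm{\ell}=(\ell_{ij})_{1\le i<j\le r}\in\NN_0^{r(r-1)/2}$ and assume $\re(s_{ii})$ is large for all $i$, say $\re(s_{ii})>1+\wt(\bm{\ell})$. Every off-diagonal exponent is then a non-negative integer, so the factor $(m_i+\cdots+m_j)^{-s_{ij}}$ with $s_{ij}=-\ell_{ij}$ is the polynomial $(m_i+\cdots+m_j)^{\ell_{ij}}$. The series \eqref{eqn: explicit formula of Ar zeta function} becomes
\begin{align*}
\zeta_{\A_r}(\bm{s};-\bm{\ell})=\sum_{m_1,\dots,m_r\in\NN}\prod_{i=1}^r m_i^{-s_{ii}}\prod_{1\le i<j\le r}(m_i+\cdots+m_j)^{\ell_{ij}},
\end{align*}
and it converges absolutely in this region. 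Each factor is bounded by $(m_1+\cdots+m_r)^{\ell_{ij}}\le r^{\ell_{ij}}\prod_p \max(1,m_p)^{\ell_{ij}}$, so the sum is dominated by a product of one-variable series $\sum_{m} m^{\wt(\bm{\ell})-\re(s_{ii})}$.

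Next we expand each polynomial factor by the multinomial theorem:
\begin{align*}
(m_i+\cdots+m_j)^{\ell_{ij}}=\sum_{k^{(ij)}_i+\cdots+k^{(ij)}_j=\ell_{ij}}\binom{\ell_{ij}}{k^{(ij)}_i,\dots,k^{(ij)}_j}\prod_{p=i}^j m_p^{k^{(ij)}_p}.
\end{align*}
Multiplying over all $i<j$ and collecting the exponent of each $m_p$ gives the total exponent
\begin{align*}
K_p=\sum_{\substack{1\le i<j\le r\\ i\le p\le j}}k^{(ij)}_p .
\end{align*}
This is the quantity that \eqref{eqn: definition of Ki} should define. The summand then becomes a finite linear combination of $\prod_p m_p^{-(s_{pp}-K_p)}$.

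Because the sum over the multinomial indices is finite and every term converges absolutely, we may interchange it with the sum over $m_1,\dots,m_r$. The $r$-fold series then factors as $\prod_{p=1}^r\zeta(s_{pp}-K_p)$, which proves the formula in the region of convergence.

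Finally, both sides are meromorphic in $(s_{11},\dots,s_{rr})$. The left side is meromorphic by the continuation of Proposition~\ref{prop: rec for Ar-zeta} restricted to the slice $s_{ij}=-\ell_{ij}$ for $i<j$. The right side is a finite combination of products of Riemann zeta functions. The identity therefore extends to all $s_{ii}$ away from the singularities, namely the hyperplanes $s_{pp}=1+K_p$.

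There is one delicate point, and it is the only real obstacle. We must make sure that the meromorphic continuation of $\zeta_{\A_r}$, evaluated on this slice, agrees with the continuation of the slice series. The slice is not a singular locus for generic $s_{ii}$. The singularities of $\zeta_{\A_r}$ visible from the Mellin--Barnes recurrence lie on hyperplanes that constrain the diagonal variables (through sums like $s_{rr}-|\bm{z}|$ and the poles of the inner zetas). For $\re(s_{ii})$ large, the function is holomorphic in a neighbourhood of the slice point, and its value there equals the value of the series. By the identity theorem, agreement on this open set propagates.

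Since the right side is a product of functions of separate variables, the limits $s_{ii}\to-\ell_{ii}$ can be taken in any order. This gives the order-independence noted in the remark.
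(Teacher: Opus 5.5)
Your proof is correct and follows the paper's argument. In the region of absolute convergence you expand the off-diagonal polynomial factors by the multinomial theorem, factor the resulting series as $\prod_i\zeta(s_{ii}-K_i)$ (your $K_p$ agrees with \eqref{eqn: definition of Ki}), and extend by analytic continuation. Your explicit convergence bound, and your check that the continuation of $\zeta_{\A_r}$ restricted to the slice agrees with the continuation of the slice series, supply details the paper leaves implicit.
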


\begin{proof}
At first, when $\re(s_{ii})$ is sufficiently large for each $i=1,\dots,r$, $\zeta_{\A_r}(\bm{s};-\bm{\ell})$ converges absolutely. 
By the multinomial theorem, we have
\begin{align*}
 \zeta_{\A_r}(\bm{s};-\bm{\ell})
 &= \sum_{m_1,\dots,m_r\in\NN} 
    \prod_{i=1}^rm_i^{-s_{ii}}
    \prod_{1\le i< j\le r}(m_i+\cdots+m_j)^{\ell_{ij}} \\
 &= \prod_{1 \leq i < j \leq r} \sum_{\substack{ k^{(ij)}_i+\cdots+k^{(ij)}_{j}=\ell_{ij} \\[1mm] k^{(ij)}_p\ge0 }}
      \binom{\ell_{ij}}{k^{(ij)}_i,\dots,k^{(ij)}_{j}}
     \sum_{m_1,\dots,m_r\in\NN} 
    \prod_{i=1}^rm_i^{K_i-s_{ii}}.
\end{align*}
Here, we put 
\begin{align}\label{eqn: definition of Ki}
 K_i := \sum_{\substack{1\le p\le i\le q\le r \\ p\ne q}} k_i^{(p\,q)}.
\end{align}
Thus, we obtain the equality in the statement, and we know that the functions on both sides have analytic continuations. Hence the claim.
\end{proof}

As a consequence, we obtain the explicit formula for the diagonal values.

\begin{proposition}\label{prop: explicit formula for diag val}
For $\bm{\ell}\in\NN_0^{\frac{r(r+1)}{2}}$, we have 
\begin{align*}
 \zeta_{\A_r}\overset{\diag}{(-\bm{\ell})}
 = \prod_{1\le i<j \le r}
   \sum_{\substack{ k^{(ij)}_i+\cdots+k^{(ij)}_{j}=\ell_{ij} \\[1mm] k^{(ij)}_p\ge0 }} \!\!\!\!
   \binom{\ell_{ij}}{k^{(ij)}_i,\dots,k^{(ij)}_{j}} 
   \prod_{i=1}^r(-1)^{\ell_{ii}+K_i}\frac{B_{\ell_{ii}+K_i+1}}{\ell_{ii}+K_i+1}. 
\end{align*}
\end{proposition}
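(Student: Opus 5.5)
The explicit formula follows from Lemma \ref{decomp formula for the reg zetaAr} by passing to the limit term by term. By definition,
\[
\zeta_{\A_r}\overset{\diag}{(-\bm{\ell})}=\lim_{s_{ii}\rightarrow-\ell_{ii}}\zeta_{\A_r}(\bm{s};-\bm{\ell}),
\]
where the off-diagonal entries are already frozen at $-\ell_{ij}$ ($i<j$). Lemma \ref{decomp formula for the reg zetaAr} expresses $\zeta_{\A_r}(\bm{s};-\bm{\ell})$ as a \emph{finite} sum. It runs over the multi-indices $(k^{(ij)}_p)$ and has multinomial coefficients. Each summand is the product $\prod_{i=1}^r\zeta(s_{ii}-K_i)$, where $K_i$ is given by \eqref{eqn: definition of Ki}. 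Each $K_i$ depends only on the summation indices, not on $\bm{s}$. The identity holds first on the region of absolute convergence, then on all of $\CC^r$ minus singularities by analytic continuation, as in the proof of that lemma.

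The key step is to check that each summand is continuous at the target point $s_{ii}=-\ell_{ii}$. The $i$-th factor $\zeta(s_{ii}-K_i)$ depends only on the single variable $s_{ii}$. Its only singularity is the pole at $s_{ii}=1+K_i$. Since $K_i\ge0$ and $\ell_{ii}\ge0$, we have $-\ell_{ii}\ne 1+K_i$, so each factor is holomorphic in a neighbourhood of $s_{ii}=-\ell_{ii}$. Hence every summand, being a product of functions in separate variables, is jointly holomorphic near $(-\ell_{11},\dots,-\ell_{rr})$. The limits can therefore be taken in any order, which also justifies the remark that the diagonal value does not depend on the order. Since the sum is finite, the limit passes inside it.

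Evaluating each factor then finishes the proof. Each factor becomes $\zeta(-\ell_{ii}-K_i)$, and the classical formula $\zeta(-n)=(-1)^{n}\frac{B_{n+1}}{n+1}$ for $n=\ell_{ii}+K_i\in\NN_0$ gives
\[
(-1)^{\ell_{ii}+K_i}\frac{B_{\ell_{ii}+K_i+1}}{\ell_{ii}+K_i+1}.
\]
Substituting this into the decomposition yields the stated formula. The only real obstacle is the bookkeeping above, namely that the decomposition of Lemma \ref{decomp formula for the reg zetaAr} is valid as an identity of meromorphic functions in the diagonal variables with the off-diagonal entries fixed at non-positive integers. This holds because, for such entries, the series is a finite combination of products of one-variable Riemann zeta functions.
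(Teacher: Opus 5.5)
Your proposal is correct and takes the same route as the paper, which deduces the formula directly from Lemma \ref{decomp formula for the reg zetaAr}. You have made explicit the two steps the paper leaves implicit: the term-by-term limit in the finite sum, justified because each factor $\zeta(s_{ii}-K_i)$ is holomorphic at $s_{ii}=-\ell_{ii}$ (since $-\ell_{ii}\le 0<1+K_i$), and the evaluation $\zeta(-n)=(-1)^n\frac{B_{n+1}}{n+1}$.
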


\begin{proof}
 This is a direct consequence of Lemma \ref{decomp formula for the reg zetaAr}.
\end{proof}

The vanishing of the diagonal values with the tuple $\bm{\ell}\in\NN_{0}^{\frac{r(r+1)}{2}}$ satisfying the trivial zero condition immediately follows from this explicit formula. 

\begin{theorem}\label{thm: vanishing of diag val}
Let $\bm{\ell}\in\NN_0^{\frac{r(r+1)}{2}}$ with $\ell_{ii}>0$ ($i=1,\dots,r$) and  $\wt(\bm{\ell})\equiv r-1\mod 2$.
Then
\begin{align*}
 \zeta_{\A_r}\overset{\diag}{(-\bm{\ell})}=0.
\end{align*} 
\end{theorem}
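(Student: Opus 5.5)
We use the explicit formula of Proposition \ref{prop: explicit formula for diag val} and show that every summand of the multiple sum vanishes separately. Fix a choice of the multinomial indices $k^{(ij)}_p$, $1\le i<j\le r$, $i\le p\le j$. The corresponding summand contains the product
\begin{align*}
 \prod_{i=1}^r(-1)^{\ell_{ii}+K_i}\frac{B_{\ell_{ii}+K_i+1}}{\ell_{ii}+K_i+1}.
\end{align*}
It therefore suffices to find an index $i$ such that $\ell_{ii}+K_i+1$ is odd and at least $3$. Then $B_{\ell_{ii}+K_i+1}=0$, and the summand vanishes.

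First we bound the indices from below. Since $\ell_{ii}\ge1$ and $K_i\ge0$, we have $\ell_{ii}+K_i+1\ge2$ for every $i$. So an odd value is automatically at least $3$. The question reduces to showing that $\ell_{ii}+K_i$ is even for at least one $i$.

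Next we compute the total parity. By the definition \eqref{eqn: definition of Ki}, each $k^{(pq)}_i$ with $p<q$ appears in exactly one $K_i$, namely for the index $i$ with $p\le i\le q$ carried by the subscript. Hence
\begin{align*}
 \sum_{i=1}^r K_i=\sum_{1\le p<q\le r}\sum_{i=p}^{q}k^{(pq)}_i=\sum_{p<q}\ell_{pq},
\end{align*}
and so
\begin{align*}
 \sum_{i=1}^r(\ell_{ii}+K_i)=\wt(\bm{\ell})\equiv r-1 \pmod 2.
\end{align*}

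Now suppose, for contradiction, that all $r$ numbers $\ell_{ii}+K_i$ are odd. Their sum would then be $\equiv r\pmod 2$, contradicting the congruence above. So some $\ell_{ii}+K_i$ is even, the corresponding Bernoulli number $B_{\ell_{ii}+K_i+1}$ is an odd-index Bernoulli number with index at least $3$ and therefore zero, and the summand vanishes. Summing over all choices of indices gives $\zeta_{\A_r}\overset{\diag}{(-\bm{\ell})}=0$.

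The only step needing care is the bookkeeping identity $\sum_i K_i=\sum_{p<q}\ell_{pq}$. It requires checking that the index set in \eqref{eqn: definition of Ki} counts each $k^{(pq)}_i$ exactly once. The hypothesis $\ell_{ii}\ge1$ is used only to exclude the nonvanishing $B_1$. Beyond these two points, the argument is routine.
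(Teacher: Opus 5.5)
Your proposal is correct and follows essentially the same argument as the paper: for each summand of the explicit formula in Proposition~\ref{prop: explicit formula for diag val}, you use $\sum_i(\ell_{ii}+K_i)=\wt(\bm{\ell})\equiv r-1 \pmod 2$ to find an index with $\ell_{ii}+K_i+1$ odd, and $\ell_{ii}\ge1$ rules out $B_1$. The paper phrases the parity count as $\sum_i(\ell_{ii}+K_i+1)\equiv 1 \pmod 2$, which is the same argument.
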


\begin{proof}
By the definition of $K_i$ (see \eqref{eqn: definition of Ki}), we have 
\begin{align*}
    \sum_{i=1}^r (\ell_{ii}+K_i+1) 
    &= 
    \sum_{i=1}^r \ell_{ii}+ \sum_{i=1}^r\sum_{\substack{1\le p\le i\le q\le r \\ p\ne q}} k_i^{(p\,q)} +r \\
    &= \wt(\bm{\ell})+r \equiv r-1+r \equiv1 \mod2.
\end{align*}
Thus, there exists $i=1,\dots,r$ such that $\ell_{ii}+K_i+1\equiv1\mod2$ and since $\ell_{ii}>0$, we have 
\[
   B_{\ell_{ii}+K_i+1}=0
\]
for such $i$. 
\end{proof}

Next, we calculate the reverse values.
Due to the following proposition, the reverse values are well-defined.

\begin{proposition}\label{prop: explicit formula for rev val}
For $\bm{\ell}\in\NN_0^{\frac{r(r+1)}{2}}$, we have 
\begin{align}\label{eqn: explicit formula for rev. val. of zeta_Ar}
\zeta_{\A_r}(-\bm{\ell};s_{1r})
= \prod_{2\le i\le j\le r} \sum_{\substack{p_{ij}+q_{ij}=\ell_{ij} \\ p_{ij},q_{ij}\ge0}}\!\!
    \binom{\ell_{ij}}{q_{ij}} (-1)^{q_{ij}}
    \zeta_{r}(-\bm{\ell}'(\bm{p},\bm{q}),s_{1r}(\bm{p})),
\end{align}
where $\zeta_{r}(-\bm{\ell}'(\bm{p},\bm{q}),s_{1r}(\bm{p}))$ is the Euler--Zagier multiple zeta function and its $k$-th component is 
\begin{align*}
-\ell_{1j}-\sum_{i=2}^{k}p_{ik} -\sum_{j=k+1}^{r}q_{k+1\,j} \quad &(k=1,\dots,r-1),\\
s_{1r}-\sum_{i=2}^{r}p_{ir}\quad &(k=r).
\end{align*}
The empty summation is interpreted as $0$. 
In particular, we have 
\begin{equation}\label{eqn: expression of ar-reverse in terms of EZ-reverse}
\zeta_{\A_r}\overset{\rev}{(-\bm{\ell})}
= \sum_{2\le i\le j\le r}\ \sum_{\substack{p_{ij}+q_{ij}=\ell_{ij} \\ p_{ij},q_{ij}\ge0}}\ 
   \binom{\ell_{ij}}{p_{ij}} (-1)^{q_{ij}} \zeta_{r}^{\rev}(-\bm{\ell}'(\bm{p},\bm{q}),-\ell_{1r}(\bm{p})).
\end{equation}
\end{proposition}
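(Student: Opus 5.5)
The plan mirrors the proof of the Euler--Zagier decomposition lemma above. We fix all $s_{ij}$ except $s_{1r}$ at the non-positive integers $-\ell_{ij}$, leaving $s_{1r}$ free. We then expand every factor $(m_i+\cdots+m_j)^{\ell_{ij}}$ with $i\ge 2$ binomially. Writing $M_k:=m_1+\cdots+m_k$, we have $m_i+\cdots+m_j=M_j-M_{i-1}$, hence
\[
(m_i+\cdots+m_j)^{\ell_{ij}}=\sum_{p_{ij}+q_{ij}=\ell_{ij}}\binom{\ell_{ij}}{q_{ij}}(-1)^{q_{ij}}M_j^{p_{ij}}M_{i-1}^{q_{ij}} .
\]
The remaining factors already have the right shape. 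For $j<r$, the factor $(m_1+\cdots+m_j)^{\ell_{1j}}$ equals $M_j^{\ell_{1j}}$, and the free factor is $(m_1+\cdots+m_r)^{-s_{1r}}=M_r^{-s_{1r}}$.

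Next we collect, for each $k$, the total exponent of $M_k$. It consists of $-\ell_{1k}$ (or $s_{1r}$ when $k=r$), minus $\sum_{i=2}^{k}p_{ik}$ from the terms with $j=k$, minus $\sum_{j\ge k+1}q_{k+1\,j}$ from the terms with $i-1=k$. The last contribution is absent for $k=r$. This reproduces exactly the components of $-\bm{\ell}'(\bm{p},\bm{q})$ and $s_{1r}(\bm{p})$ in the statement. Summing over $m_1,\dots,m_r\in\NN$ then gives $\zeta_r(-\bm{\ell}'(\bm{p},\bm{q}),s_{1r}(\bm{p}))$, since $\sum\prod_k M_k^{-u_k}$ is precisely the Euler--Zagier series.

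For justification, we first take $\re(s_{1r})$ large. Every other exponent is a non-positive integer, so the total degree of the numerator polynomial is $\wt(\bm{\ell})-\ell_{1r}$. The series therefore converges absolutely once $\re(s_{1r})>\wt(\bm{\ell})-\ell_{1r}+r$. In that range all finite rearrangements are legitimate, and each Euler--Zagier series on the right-hand side converges as well. We note that $\zeta_r(u_1,\dots,u_r)$ converges when $\re(u_k+\cdots+u_r)>r-k+1$ for all $k$, and this holds for $\re(s_{1r})$ large. Both sides are meromorphic in $s_{1r}$: the left-hand side by the ordered-limit/analytic-continuation framework (Proposition~\ref{prop: rec for Ar-zeta}), the right-hand side as a finite sum of Euler--Zagier functions. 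The identity therefore extends to all $s_{1r}$.

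The one delicate point is what ``setting $s_{ij}=-\ell_{ij}$ first'' means for the left-hand side. The reverse value takes the limits in the $s_{ij}$, $(i,j)\ne(1,r)$, before the one in $s_{1r}$. We handle this by performing those inner limits in the region where $\re(s_{1r})$ is large. There the function is holomorphic in the remaining variables near $-\ell_{ij}$, because the series converges locally uniformly. Hence the inner limits are simply evaluations of the series, and the function $\zeta_{\A_r}(-\bm{\ell};s_{1r})$ is well defined there and continues meromorphically in $s_{1r}$.

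Finally, we let $s_{1r}\to-\ell_{1r}$ term by term in \eqref{eqn: explicit formula for rev val}. In each summand, $s_{1r}$ appears only in the last argument, shifted by the integer $-\sum_i p_{ir}$. Taking this limit with the first $r-1$ arguments already fixed is, by definition, the Euler--Zagier reverse value $\zeta_r^{\rev}$ of \cite{akiyama2001analytic}. Because the earlier arguments are fixed integers before the last limit is taken, this is the iterated limit in the reverse order, and we obtain \eqref{eqn: expression of ar-reverse in terms of EZ-reverse}.

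The main obstacle is the legitimacy of the inner limits, that is, checking uniform convergence in a neighbourhood of the integer points while $\re(s_{1r})$ is large. This is needed so that the order of limits encoded by ``$\rev$'' really coincides with the reverse order for $\zeta_r$. I would settle it by bounding the summand by $M_r^{C-\re(s_{1r})}$.
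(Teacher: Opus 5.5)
Your proposal is correct and follows the same route as the paper. The paper likewise expands each factor $(m_i+\cdots+m_j)^{\ell_{ij}}$ with $i\ge 2$ binomially as $(M_j-M_{i-1})^{\ell_{ij}}$, identifies the resulting Euler--Zagier series, continues in $s_{1r}$ and lets $s_{1r}\to-\ell_{1r}$. You additionally supply the convergence range $\re(s_{1r})>\wt(\bm{\ell})-\ell_{1r}+r$ and the order-of-limits justification that the paper leaves implicit, and your bookkeeping tacitly corrects two typos in the statement: the $k$-th component should read $-\ell_{1k}$, and the outer sum in the second formula should be a product $\prod_{2\le i\le j\le r}$.
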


\begin{proof}
For \eqref{eqn: explicit formula for rev. val. of zeta_Ar}, 
just use the binomial theorem for the $(i,j)$-th component ($2\le i\le j\le r$) of the summand of $\zeta_{\A_r}(-\bm{\ell};s_{1r})$. 
Taking the limit $s_{1r}\rightarrow-\ell_{1r}$, we obtain the formula \eqref{eqn: expression of ar-reverse in terms of EZ-reverse}.
\end{proof}
\section{Trivial zeros of zeta functions of type \texorpdfstring{$\A_r$}{Ar}}
In this section, we prove our main Theorem \ref{thm: main theorem}.
In \S \ref{subsec: trivial zeros of reg}, we shall prove the vanishing of regular values (Theorem \ref{thm: trivial zeros of reg Ar zeta}.
To prove it, we consider the generating series of regular values of $\zeta_{\A_r}(\bm{s})$ at non-positive integers.
In \S \ref{subsec: Trivial zeros of reverse type}, we prove the vanishing of reverse values with a different method (Theorem \ref{th:vanishing_reverse_values}).

\subsection{Trivial zeros of regular values}\label{subsec: trivial zeros of reg}
In this subsection, we prove Theorem \ref{thm: trivial zeros of reg Ar zeta} and Theorem \ref{thm: vanishing of diag val}.   
For our proof, we consider a generating series of the regular values $\zeta_{\A_r}\overset{\reg}{(-\bm{\ell})}$ and the diagonal values $\zeta_{\A_r}\overset{\diag}{(-\bm{\ell})}$. 
We begin with a more general function;
\begin{equation*}
 Z_{\A_r}(\bm{s};\bm{t})
 := \sum_{m_1,\dots,m_r\in\NN} 
    \prod_{1\le i\le j\le r}
    \frac{ t_{ij}^{m_i+\cdots+m_j} }
         { (m_i+\cdots+m_j)^{s_{ij}} }
\end{equation*}
with $t_{ij}\in\CC$ such that $|t_{ij}|<1$.
Note that $Z_{\A_r}(\bm{s};\bm{t})$ converges for all $\bm{s}\in\CC^{\frac{r(r+1)}{2}}$.
We next put
\begin{equation*}
 G_{\A_r}(\bm{X}):=Z_{\A_r}(\bm{0};(\ee^{X_{ij}})).
\end{equation*}

The following lemma is fundamental.
\begin{lemma}\label{lem: Explicit formula for GAr}
We have 
\begin{align*}
  G_{\A_r}(\bm{X}) = \prod_{j=1}^r 
                       \frac{ \exp(\sum_{1\le k\le j\le \ell\le r}X_{k\ell}) }
                            { 1 - \exp(\sum_{1\le k\le j\le \ell\le r}X_{k\ell}) }.
\end{align*}
\end{lemma}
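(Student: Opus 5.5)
The plan is to evaluate the series directly. The key is a change of variables that turns the product over all intervals $[i,j]$ into a product over a single index. Set $t_{ij}=\ee^{X_{ij}}$ with $\re(X_{ij})<0$, so that $|t_{ij}|<1$ and $Z_{\A_r}(\bm{0};\bm{t})$ converges absolutely. At $\bm{s}=\bm{0}$ the denominators disappear, and the summand becomes
\begin{align*}
 \prod_{1\le i\le j\le r} \exp\Bigl(X_{ij}(m_i+\cdots+m_j)\Bigr)
 = \exp\Bigl(\sum_{1\le i\le j\le r} X_{ij}\sum_{p=i}^{j} m_p\Bigr).
\end{align*}

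Next, interchange the two finite sums in the exponent and collect by the index $p$. The variable $m_p$ appears in the term of $(i,j)$ exactly when $i\le p\le j$. Hence the exponent equals $\sum_{p=1}^r m_p Y_p$, where
\begin{align*}
 Y_p:=\sum_{1\le k\le p\le \ell\le r} X_{k\ell}.
\end{align*}
The summand therefore factors as $\prod_{p=1}^r \ee^{m_pY_p}$. By absolute convergence the multiple sum over $m_1,\dots,m_r\in\NN$ splits into a product of $r$ one-variable geometric series.

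Each factor is $\sum_{m\ge1}\ee^{mY_p}=\ee^{Y_p}/(1-\ee^{Y_p})$. This is valid because $\re(Y_p)<0$: $Y_p$ is a nonempty sum of the $X_{k\ell}$ (it contains $X_{pp}$), each with negative real part. Taking the product of these factors gives the stated formula. It then holds as an identity of meromorphic functions, or of formal Laurent expansions in $\bm{X}$, by analytic continuation from the region $\re(X_{ij})<0$.

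There is no real obstacle here. The only points needing care are the index bookkeeping in the interchange of sums, namely that the pair $(k,\ell)$ contributes to $m_p$ exactly when $k\le p\le\ell$, and the justification of absolute convergence so that the multiple series may be factored.
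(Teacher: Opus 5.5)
Your proposal is correct and follows the same route as the paper: you rewrite the exponent as $\sum_{p} m_p \sum_{k\le p\le \ell} X_{k\ell}$, factor the series into $r$ geometric series, and sum each one. Your added remarks on absolute convergence for $\re(X_{ij})<0$ and on the bookkeeping of the interchange only make explicit what the paper's one-line proof leaves implicit.
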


\begin{proof}
Since 
\begin{align*}
Z_{\A_r}(\bm{0};(\ee^{X_{ij}}))
&= \prod_{j=1}^r \sum_{m_j\in\NN}
   \exp\left(m_j\left(\sum_{1\le k\le j\le \ell\le r} X_{k\ell}\right) \right),
\end{align*}
we get the claim.
\end{proof}

By definition, we find that the function $G_{\A_r}(\bm{X})$ has a singularity at $\bm{X}=\bm{0}$, 
but using the recurrence formula of regular values \eqref{eqn: rec. rel. for reg Ar zeta}, we can obtain a power series.  
We now explain this \emph{regularization procedure of $G_{\A_r}(\bm{X})$}.
Note that when $r=1$, $G_{\A_1}(X)$ is essentially the generating function of $\zeta(-\ell)$ ($\ell \in\NN_0$). Indeed, we have 
\begin{align*}
 G_{\A_1}(X)
 = \frac{1}{\ee^{-X}-1}
 = -\frac{1}{X} + \sum_{\ell\ge0} \zeta(-\ell)\frac{X^\ell}{\ell!}.
\end{align*}
Therefore, the function $G^{\rm reg}_{\A_1}(X)$ defined by
\begin{equation}\label{eqn: GA1}
 G^{\rm reg}_{\A_1}(X) := G_{\A_1}(X) + \frac{1}{X} 
 = \sum_{\ell\ge0} \zeta(-\ell)\frac{X^\ell}{\ell!}
\end{equation}
is nothing but the generating series of $\zeta(-\ell)$ ($\ell \in\NN_0$).
From this point of view and from the recurrence formula\eqref{eqn: rec. rel. for reg Ar zeta} of $\zeta_{\A_r}\overset{\reg}{(-\bm{\ell})}$, we consider the following.

\begin{definition}
Let $r\ge2$. 
For variables $\bm{X}=(X_{ij})_{1\le i\le j\le r}$, we define
\begin{equation}\label{eqn: rec rel for gen func of reg}
 G^{\rm reg}_{\A_r}(\bm{X})
 := G^{\rm reg}_{\A_{r-1}}(\bm{X}')G^{\rm reg}_{\A_1}(X'')
    + \frac{ G^{\rm reg}_{\A_{r-1}}(\bm{X}'-X'')-G^{\rm reg}_{\A_{r-1}}(\bm{X}') }{X''}, 
\end{equation}
where 
\begin{align*}
 \bm{X}'=(X'_{ij})_{1\le i\le j\le r-1} 
 := \begin{cases}
      X_{ij} \quad (1\le i\le j\le r-2), \\
      X_{i\,r-1}+X_{i\,r} \quad (1\le i\le j=r-1),
    \end{cases} 
\end{align*}
and $X'':=\sum_{j=1}^rX_{jr}$. We also set 
\begin{align*}
 \bm{X}'-X'' 
 := \begin{cases}
      X_{ij} \quad &(1\le i\le j\le r-2), \\
      X_{1\,r-1}+X_{1r}-X'' \quad &(i=1,\,j=r-1),\\
      X_{i\,r-1}+X_{i\,r} \quad &(2\le i\le j=r-1).
    \end{cases} 
\end{align*}
\end{definition}

The function $G_{\A_r}^{\rm reg}(\bm{X})$ is the generating series of regular values. 
To see this, we first prove that $G^{\rm reg}_{\A_r}(\bm{X})$ is a power series.

\begin{lemma}
We have 
\begin{align*}
  G^{\rm reg}_{\A_r}(\bm{X})\in\CC[[\,(X_{ij})_{1\le i\le j\le r}\,]].
\end{align*} 
\end{lemma}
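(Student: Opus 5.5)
The plan is induction on $r$, using the recursive definition \eqref{eqn: rec rel for gen func of reg}. For $r=1$, $G^{\rm reg}_{\A_1}(X)=G_{\A_1}(X)+\frac{1}{X}$ is the function $\frac{1}{\ee^{-X}-1}+\frac1X$. Its pole at $X=0$ cancels, so it is holomorphic near $0$ and lies in $\CC[[X]]$; its Taylor coefficients are exactly $\zeta(-\ell)$ by \eqref{eqn: GA1}.

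For the inductive step, assume $G^{\rm reg}_{\A_{r-1}}(\bm{Y})\in\CC[[(Y_{ij})_{1\le i\le j\le r-1}]]$. The substitution $\bm{Y}=\bm{X}'$ replaces each $Y_{ij}$ by a linear form with no constant term in the variables $X_{ij}$: either $X_{ij}$ or $X_{i\,r-1}+X_{ir}$. Substituting linear forms without constant term into a formal power series is well defined, since each total degree receives only finitely many contributions. Hence $G^{\rm reg}_{\A_{r-1}}(\bm{X}')\in\CC[[\bm{X}]]$. The same holds for $G^{\rm reg}_{\A_{r-1}}(\bm{X}'-X'')$, because the $(1,r-1)$ entry $X_{1\,r-1}+X_{1r}-X''$ is again a linear form with no constant term. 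Also $G^{\rm reg}_{\A_1}(X'')\in\CC[[\bm{X}]]$, since $X''=\sum_j X_{jr}$. So the first summand is a product of power series and is a power series.

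The only point needing care is the difference quotient. Write $F(\bm{Y})=G^{\rm reg}_{\A_{r-1}}(\bm{Y})$ and isolate the variable $Y_{1\,r-1}$:
\[
F(\bm{Y})=\sum_{n\ge0}f_n(\widehat{\bm{Y}})\,Y_{1\,r-1}^n ,
\]
where $\widehat{\bm{Y}}$ denotes the remaining variables. The two arguments $\bm{X}'-X''$ and $\bm{X}'$ differ only in that slot, where the entries are $a-X''$ and $a$ with $a:=X_{1\,r-1}+X_{1r}$. Therefore
\[
F(\bm{X}'-X'')-F(\bm{X}')=\sum_{n\ge1}f_n\,\bigl((a-X'')^n-a^n\bigr).
\]
Each difference $(a-X'')^n-a^n=-X''\sum_{k=0}^{n-1}(a-X'')^k a^{n-1-k}$ is divisible by $X''$ as a polynomial. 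After dividing, the $n$-th quotient has total degree at least $n-1$, so the resulting series is still a well-defined formal power series.

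This gives the formal identity
\[
\frac{F(\bm{X}'-X'')-F(\bm{X}')}{X''}=-\sum_{n\ge1}f_n(\widehat{\bm{X}'})\sum_{k=0}^{n-1}(a-X'')^k a^{n-1-k}\in\CC[[\bm{X}]].
\]
Equivalently, one can write the quotient as $-\int_0^1 \partial_{Y_{1\,r-1}}F(\bm{X}'-tX'')\,dt$, which makes the divisibility transparent. Adding the two summands shows $G^{\rm reg}_{\A_r}(\bm{X})\in\CC[[\bm{X}]]$, which completes the induction.

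The main obstacle is justifying this division by $X''$ rigorously in the ring of formal power series rather than as functions. The termwise factorization above handles it, together with the observation that degrees grow with $n$, so all sums are locally finite.
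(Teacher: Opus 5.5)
Your proof is correct and follows the same route as the paper: induction on $r$, with the product term covered by the inductive hypothesis, and the difference quotient handled by noting that $\bm{X}'-X''$ and $\bm{X}'$ differ only in the $(1,r-1)$ slot, so that each difference $(X_{1\,r-1}+X_{1r}-X'')^n-(X_{1\,r-1}+X_{1r})^n$ is divisible by $X''$. Your remarks on substituting linear forms without constant term and on the local finiteness of the termwise quotient make explicit the formal-power-series details that the paper leaves implicit.
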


\begin{proof}
We prove the claim by induction on $r$.
For $r=1$, this case is clear from \eqref{eqn: GA1}.
For $r>1$, we assume that $G^{\rm reg}_{\A_j}$ are the power series for any $j=1,\dots,r-1$.
From \eqref{eqn: rec rel for gen func of reg}, 
since the induction hypothesis implies that 
\begin{align*}
G^{\rm reg}_{\A_{r-1}}(\bm{X}')G^{\rm reg}_{\A_1}(X'')
\end{align*}
forms a power series, 
it is enough to show that 
\begin{align*}
\frac{ G^{\rm reg}_{\A_{r-1}}(\bm{X}'-X'')-G^{\rm reg}_{\A_{r-1}}(\bm{X}') }{X''}
\end{align*}
is a power series. Here, recall that
\begin{align*}
 \bm{X}'=(X'_{ij})_{1\le i\le j\le r-1} 
 := \begin{cases}
      X_{ij} \quad (1\le i\le j\le r-2), \\
      X_{i\,r-1}+X_{i\,r} \quad (1\le i\le j=r-1),
    \end{cases} 
\end{align*}
$X'':=\sum_{j=1}^rX_{jr}$, and 
\begin{align*}
 \bm{X}'-X'' 
 := \begin{cases}
      X_{ij} \quad &(1\le i\le j\le r-2), \\
      X_{1\,r-1}+X_{1r}-X'' \quad &(i=1,\,j=r-1),\\
      X_{i\,r-1}+X_{i\,r} \quad &(2\le i\le j=r-1).
    \end{cases} 
\end{align*}
By the induction hypothesis, we know that
\begin{align*}
 G^{\rm reg}_{\A_{r-1}}(\bm{Y})
 =G^{\rm reg}_{\A_{r-1}}((Y_{ij})_{1\le i\le j\le r-1})
 \in \CC[[(Y_{ij})_{1\le i\le j\le r-1}]].
\end{align*}
Thus, we can write 
\begin{align*}
 G^{\rm reg}_{\A_{r-1}}(\bm{Y})
 = \sum_{\substack{ \ell_{ij}\ge0 \\ 1\le i\le j\le r-1 }}
     c(\ell_{ij}) \frac{Y_{ij}^{\ell_{ij}}}{\ell_{ij}!}
\end{align*}
with some coefficients $c(\ell_{ij})\in\CC$. 
By this expression, we have
{\small
\begin{align*}
&G^{\rm reg}_{\A_{r-1}}(\bm{X}'-X'')-G^{\rm reg}_{\A_{r-1}}(\bm{X}') \\
&= \sum_{\substack{ \ell_{ij}\ge0 \\ 1\le i\le j\le r-1 }}
     c(\ell_{ij}) 
     \left(
     \prod_{ \substack{ 1\le i\le j \le r-1 \\  (i,j)\ne(1,r-1) }}\frac{X_{ij}^{\ell_{ij}}}{\ell_{ij}!}\right)\cdot
     \left[
       \frac{ (X_{1\,r-1}+X_{1r}-X'')^{\ell_{1\,r-1}} - (X_{1\,r-1}+X_{1r})^{\ell_{1\,r-1}}  }
            {\ell_{1\,r-1}!}\right]
\end{align*}}
It is obvious that for any $\ell_{1\,r-1}\ge0$, 
\begin{align*}
 (X_{1\,r-1}+X_{1r}-X'')^{\ell_{1\,r-1}} - (X_{1\,r-1}+X_{1r})^{\ell_{1\,r-1}} 
\end{align*}
is divisible by $X''=\sum_{j=1}^rX_{jr}$.
We therefore obtain the claim.
\end{proof}

Now, we show that $G_{\A_r}^{\rm reg}(\bm{X})$ is the generating series of $\zeta_{\A_r}\overset{\reg}{(-\bm{\ell})}$.
rWe define 
\begin{align*}
  H_{\A_r}(\bm{X})
  := \sum_{\substack{ \ell_{ij}\ge0 \\ 1\le i\le j\le r}}
       \zeta_{\A_r}\overset{\reg}{(-\bm{\ell})}
       \frac{ X_{ij}^{\ell_{ij}} }{\ell_{ij}!} \in \CC[[\,(X_{ij})\,]].
\end{align*}

\begin{proposition}
We have 
\begin{align*}
  H_{\A_r}(\bm{X})=G^{\rm reg}_{\A_r}(\bm{X}).
\end{align*}
\end{proposition}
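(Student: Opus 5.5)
Both sides will be identified by showing that they satisfy the same recursion in $r$ with the same base case, and then arguing by induction on $r$.

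For the base case $r=1$, \eqref{eqn: GA1} gives directly $G^{\rm reg}_{\A_1}(X)=\sum_{\ell\ge0}\zeta(-\ell)X^\ell/\ell!=H_{\A_1}(X)$. For $r=2$ the regular value is the $\id$-value. We will use the recurrence \eqref{eqn: example of reg rec rel for A2}, which has the same shape as \eqref{eqn: rec. rel. for reg Ar zeta} with $r=2$ (the product over $j=2,\dots,r-1$ is empty, and the $\A_{r-1}=\A_1$ factor is just $\zeta$). This way $r=2$ can be handled by the same computation as the inductive step. So fix $r\ge2$, assume $H_{\A_{r-1}}=G^{\rm reg}_{\A_{r-1}}$, and write $c_{r-1}(\bm{\ell}')=\zeta_{\A_{r-1}}\overset{\reg}{(-\bm{\ell}')}$ for the coefficients.

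The main step is to expand the right-hand side of \eqref{eqn: rec rel for gen func of reg} in monomials $\prod X_{ij}^{\ell_{ij}}/\ell_{ij}!$ and match it term by term with \eqref{eqn: rec. rel. for reg Ar zeta}.

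\emph{First term.} In $G^{\rm reg}_{\A_{r-1}}(\bm{X}')G^{\rm reg}_{\A_1}(X'')$, a factor $(X_{i\,r-1}+X_{ir})^{n}/n!$ from the $(i,r-1)$ slot and the factor $(X''^{\,m}/m!)\zeta(-m)$ with $X''=\sum_{j}X_{jr}$ are both expanded by the binomial and multinomial theorems. To extract the coefficient of $X_{ir}^{\ell_{ir}}/\ell_{ir}!$, we split $\ell_{ir}=p_i+q_i$, where $p_i$ comes from $\bm{X}'$ and $q_i$ comes from $X''$ (for $i=r$ only $q_r=\ell_{rr}$ occurs). This produces $\prod_j\binom{\ell_{jr}}{q_j}c_{r-1}(\bm{\ell}'(\bm{p}))\zeta(-\ell_{rr}-|\bm{q}|)$, which is exactly the first sum in \eqref{eqn: rec. rel. for reg Ar zeta}.

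\emph{Second term.} The difference quotient only involves the $(1,r-1)$ slot. The identity
\[
\frac{(A-X'')^{n}-A^{n}}{X''}=\sum_{m=0}^{n-1}\binom{n}{m}A^{m}(-X'')^{n-1-m}\cdot(-1)
\]
holds with $A=X_{1\,r-1}+X_{1r}$. Note that $A-X''=X_{1\,r-1}-\sum_{j\ge2}X_{jr}$, so $X_{1r}$ cancels, and the difference quotient becomes a sum in which $X_{1r}$ enters only through $A^m$, alongside powers of $-\sum_{j\ge2}X_{jr}$. I expect this to be the main obstacle. The coefficient of $X_{1\,r-1}^{\ell_{1\,r-1}}X_{1r}^{\ell_{1r}}X_{rr}^{\ell_{rr}}\prod_{j\ge2}X_{jr}^{q_j}$ has to be computed and shown to equal
\[
\frac{(-1)^{\ell_{rr}+|\bm{q}'|+1}}{\ell_{1r}+\ell_{rr}+|\bm{q}'|+1}\binom{\ell_{1r}+\ell_{rr}+|\bm{q}'|}{\ell_{1r}}^{-1}
\]
times $c_{r-1}(\bm{\ell}''(\bm{p}))$, with $n=\ell_{1\,r-1}+\ell_{1r}+\ell_{rr}+|\bm{q}'|+1$.

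To do this, I would isolate the single-variable identity: the coefficient of $a^{\alpha}x^{\beta}/(\alpha!\beta!)$ in $((a+b-x)^n-(a+b)^n)/x$, with the other variable $b$ tracked as well. This reduces to a Beta-integral identity $\frac{\alpha!\beta!}{(\alpha+\beta+1)!}=\int_0^1 t^\alpha(1-t)^\beta\,dt$, which produces precisely $\frac{1}{\alpha+\beta+1}\binom{\alpha+\beta}{\alpha}^{-1}$. The cleanest route is to write
\[
\frac{F(\bm{X}'-X'')-F(\bm{X}')}{X''}=-\int_0^1 \partial_{1,r-1}F(\bm{X}'-tX'')\,dt
\]
and read off coefficients from that.

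Once the coefficients agree on both sides, Proposition \ref{prop: recurrence formula for reg val of Ar-zeta} gives $H_{\A_r}=G^{\rm reg}_{\A_r}$, and induction completes the proof.
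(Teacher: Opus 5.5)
Your proposal is correct and follows the paper's proof. Both argue by induction on $r$, split $H_{\A_r}$ into two pieces via the recurrence \eqref{eqn: rec. rel. for reg Ar zeta}, match the product term by binomial and multinomial expansion, and match the difference-quotient term by a Beta-type identity. Your integral representation $-\int_0^1\partial_{1,r-1}F(\bm{X}'-tX'')\,dt$ is a cleaner packaging of the paper's ``fundamental identity'' $\sum_{q_1}\binom{m_{1r}}{q_1}\frac{(-1)^{q_1}}{q_1+|\bm{q}'|+1}=\frac{1}{m_{1r}+|\bm{q}'|+1}\binom{m_{1r}+|\bm{q}'|}{m_{1r}}^{-1}$. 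Your explicit appeal to \eqref{eqn: example of reg rec rel for A2} for $r=2$ also covers a case the paper handles only implicitly.
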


\begin{proof}
When $r=1$, we have 
\begin{align*}
  H_{\A_1}(\bm{X}) = \sum_{\ell\ge0}\zeta(-\ell)\frac{X^\ell}{\ell!} = G^{\rm reg}_{\A_1}(X).
\end{align*}
When $r>1$, from \eqref{eqn: rec. rel. for reg Ar zeta}, 
we have 
\begin{align*}
H_{\A_r}(\bm{X})
= \sum_{\substack{ \ell_{ij}\ge0 \\ 1\le i\le j\le r}}
  \zeta_{\A_r}\overset{\reg}{(-\bm{\ell})}
  \frac{ X_{ij}^{\ell_{ij}} }{\ell_{ij}!} 
= F_1(\bm{X}) + F_2(\bm{X}),
\end{align*}
where 
\begin{align*}
 F_1(\bm{X})
 := \sum_{\substack{ \ell_{ij}\ge0 \\ 1\le i\le j\le r}}
  \left(
    \prod_{k=1}^{r-1}\ 
    \sum_{p_k+q_k=\ell_{kr}}
   \binom{\ell_{kr}}{q_k}
     \zeta_{\A_{r-1}}\overset{\reg}{(-\bm{\ell}'(\bm{p}))}
     \zeta(-\ell_{rr}-|\bm{q}|)
  \right)
  \frac{ X_{ij}^{\ell_{ij}} }{\ell_{ij}!}
\end{align*} 
and 
{\small
\begin{align*}
F_2(\bm{X}):=\!\!\!
\sum_{\substack{ \ell_{ij}\ge0 \\ 1\le i\le j\le r}}
  \left(
    \frac{ (-1)^{\ell_{rr}+|\bm{q}'|+1} }{\ell_{1r}+\ell_{rr}+|\bm{q}'|+1}
    \binom{\ell_{1r}+\ell_{rr}+|\bm{q}'|}{\ell_{1r}}^{-1}\!\!\!\cdot
    \prod_{k=2}^{r-1}\
    \sum_{p_k+q_k=\ell_{kr}}\!\!\!
    \binom{\ell_{kr}}{q_k}
    \zeta_{\A_{r-1}}\overset{\reg}{(-\bm{\ell}''(\bm{p}))}
  \right)
  \frac{ X_{ij}^{\ell_{ij}} }{\ell_{ij}!}.
\end{align*}}
Thus, it is enough to show
\begin{align}
  &F_1(\bm{X}) = H_{\A_{r-1}}(\bm{X}')H_{\A_1}(X'') \tag{i}, \\
  &F_2(\bm{X}) = \frac{ H_{\A_{r-1}}(\bm{X}'-X'') -H_{\A_{r-1}}(\bm{X}') }
                      { X'' }
                      \tag{ii}.
\end{align}
For point (i), since we have
\begin{align*}
  &(X_{i\,r-1}+X_{ir})^{\ell'_{i\,r-1}}
  = \sum_{p_i=0}^{\ell'_{i\,r-1}}
      \binom{\ell'_{i\,r-1}}{p_i} 
      X_{i\,r-1}^{\ell'_{i\,r-1}-p_i}X_{ir}^{p_i},\\
      &(X'')^{\ell}=(X_{1r}+X_{2r}+\cdots+X_{rr})^{\ell}
  = \sum_{\substack{q_1+\cdots+q_r=\ell \\ q_k\ge0}}
      \frac{\ell!}{q_1!\cdots q_r!}X_{1r}^{q_1}\cdots X_{rr}^{q_r},
\end{align*}
we can calculate 
\begin{align*}
 H_{\A_{r-1}}(\bm{X}')H_{\A_1}(X'')
 &= \left(
      \sum_{\substack{ \ell'_{ij}\ge0 \\ 1\le i\le j\le r-1}}
         \zeta_{\A_{r-1}}\overset{\reg}{(-\bm{\ell}')}
         \frac{ (X_{ij}')^{\ell'_{ij}} }{\ell'_{ij}!} \right)\cdot
          \left( \sum_{\ell\ge0}\zeta(-\ell)\frac{(X'')^\ell}{\ell!} \right) \\
 &=  \sum_{\substack{ \ell'_{ij}\ge0 \\ 1\le i\le j\le r-1}}
        \sum_{\substack{ q_k\ge0 \\ 1\le k\le r }}
        \sum_{p_i=0}^{\ell'_{i\,r-1}}
        \binom{\ell'_{i\,r-1}}{p_i}
        \zeta_{\A_{r-1}}\overset{\reg}{(-\bm{\ell}')}
        \zeta(-|\bm{q}|) \\
&\qquad \cdot 
  \left(
     \prod_{1\le i\le j\le r-2}
     \frac{ (X_{ij}')^{\ell'_{ij}} }{\ell'_{ij}!} \right)
     \left(
        \prod_{1\le i\le r-1}
        \frac{X_{i\,r-1}^{\ell'_{i\,r-1}-p_i}}{\ell'_{i\,r-1}!}
        \frac{X_{ir}^{p_i+q_i}}{q_{i}!} \right)
        \frac{X_{rr}^{q_r}}{q_r!}.
\end{align*}
Thus, we set 
\begin{align*}
 \bm{\ell}=(\ell_{ij})_{1\le i\le j\le r}
 = \begin{cases}
     \ell'_{ij} \quad &(1\le i\le j\le r-2), \\ 
     \ell'_{i\,r-1}-p_i \quad &(1\le i\le r-1,\, j=r-1), \\
     p_i+q_i \quad &(1\le i\le r-1,\, j=r),\\
     q_{r} \quad &(i=j=r),
   \end{cases}
\end{align*}
and get 
{\small
\begin{align*}
H_{\A_{r-1}}(\bm{X}')H_{\A_1}(X'')
= \sum_{\substack{ \ell_{ij}\ge0 \\ 1\le i\le j\le r}}
  \left(
    \prod_{k=1}^{r-1}\ 
    \sum_{p_k+q_k=\ell_{kr}}
       \binom{\ell_{kr}}{q_k}
       \zeta_{\A_{r-1}}\overset{\reg}{(-\bm{\ell}'(\bm{p}))}
       \zeta(-\ell_{rr}-|\bm{q}|)
  \right)
  \frac{ X_{ij}^{\ell_{ij}} }{\ell_{ij}!}.
\end{align*}}
The right-hand side is equal to $F_1(\bm{X})$.

For point (ii), since 
\begin{align*}
 &(X_{1\,r-1}+X_{1r}-X'')^{\ell_{1\,r-1}} - (X_{1\,r-1}+X_{1r})^{\ell_{1\,r-1}} \\
 &= \sum_{k=1}^{\ell_{1\,r-1}}
      \binom{\ell_{1\,r-1}}{k}(X_{1\,r-1}+X_{1r})^{\ell_{1\,r-1}-k}(-X'')^k,
\end{align*}
we have 
\begin{align*}
&\frac{ H_{\A_{r-1}}(\bm{X}'-X'') -H_{\A_{r-1}}(\bm{X}') }
                      {X''}\\
&= \sum_{\substack{ \ell_{ij}\ge0 \\ 1\le i\le j\le r-1}}
      \zeta_{\A_{r-1}}\overset{\reg}{(-\bm{\ell})}
      \left(
         \prod_{1\le i\le j\le r-2}
         \frac{ X_{ij}^{\ell_{ij}} }{\ell_{ij}!} 
      \right)
      \left(
        A\cdot
        \sum_{\substack{ 0\le p_i\le \ell_{i\,r-1} \\ 2\le i\le r-1 }}
          \binom{p_i+q_i}{p_i}
      \right)
      \\
&\qquad\cdot  
   \frac{X_{1\,r-1}^{\ell_{1\,r-1}-k-p}}{(\ell_{1\,r-1}-k-p)!} 
   \left(
       \sum_{\substack{ 0\le p_i\le \ell_{i\,r-1} \\ 2\le i\le r-1 }}
      \frac{ X_{i\,r-1}^{\ell_{i\,r-1}-p_i} }{ (\ell_{i\,r-1}-p_i)! }
   \right)
   \frac{X_{1r}^{p+q_1}}{(p+q_1)!}
   \left(
      \prod_{2\le i\le r-1}
      \frac{ X_{ir}^{p_i+q_i} }{ (p_i+q_i)! }
   \right)
   \frac{X_{rr}^{q_r}}{q_r!},
\end{align*}
where
\begin{align*}
A := \sum_{k=1}^{\ell_{1\,r-1}}
        \sum_{p=0}^{\ell_{1\,r-1}-k}\!\!
        \sum_{\substack{q_1+\cdots+q_r=k-1 \\ q_j\ge0}}\!\!
           \frac{ (-1)^{k} (k-1)! }{k!}
           \binom{p+q_1}{p}.
\end{align*}
Thus, by putting 
\begin{align*}
 \bm{m}=(m_{ij})_{1\le i\le j\le r}
 = \begin{cases}
     \ell_{ij}                       \quad &(1\le i\le j\le r-2), \\ 
     \ell_{1\,r-1}-|\bm{q}|-1-p      \quad &(i=1,\, j=r-1), \\
     p+q_1                        \quad &(i=1,\, j=r), \\ 
     \ell_{i\,r-1}-p_i                      \quad &(2\le i\le j=r-1),\\
     p_i+q_i           \quad &(2\le i\le r-1,\, j=r),\\
     q_{r}                        \quad &(i=j=r),
   \end{cases}
\end{align*}
and using the following fundamental identity
\begin{align*}
\sum_{q_1=0}^{m_{1r}}
  \binom{m_{1r}}{q_1}\frac{(-1)^{q_1}}{q_1+|\bm{q}'|+1}
= \frac{1}{m_{1r}+|\bm{q}'|+1}\binom{m_{1r}+|\bm{q}'|}{m_{1r}}^{-1}, 
\end{align*}
we obtain point (ii).

Since both $H_{\A_r}(\bm{X})$ and $G^{\rm reg}_{\A_r}(\bm{X})$ have the same recurrence formulas and coincide with each other in the initial case $r=1$, we obtain the result.
\end{proof}

Using the function $G_{\A_r}^{\reg}(\bm{X})$, we prove the vanishing of regular values with the tuple satisfying the trivial zero condition \eqref{eq:trivial_zero_condition}.

\begin{theorem}\label{thm: trivial zeros of reg Ar zeta}
Let $\bm{\ell}\in\NN_0^{\frac{r(r+1)}{2}}$ with $\ell_{ii}>0$ ($i=1,\dots,r$) and  $\wt(\bm{\ell})\equiv r-1\mod 2$. Then 
\begin{align*}
 \zeta_{\A_r}\overset{\reg}{(-\bm{\ell})}=0.
\end{align*} 
\end{theorem}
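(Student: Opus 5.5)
The plan is to prove a parity statement for the generating series $G^{\rm reg}_{\A_r}(\bm{X})$ directly from its defining recurrence \eqref{eqn: rec rel for gen func of reg}, and then extract coefficients using $H_{\A_r}=G^{\rm reg}_{\A_r}$. Call a power series in $(X_{ij})$ \emph{negligible} if it is a sum of power series each of which is independent of $X_{jj}$ for some $j\in\{1,\dots,r\}$. Every monomial of a negligible series has some $\ell_{jj}=0$, so it contributes nothing to coefficients with $\ell_{11},\dots,\ell_{rr}\ge1$. The claim to prove by induction on $r$ is
\begin{equation*}
 G^{\rm reg}_{\A_r}(-\bm{X}) \equiv (-1)^r\, G^{\rm reg}_{\A_r}(\bm{X}) \pmod{\text{negligible series}}.
\end{equation*}
Granting this, compare the coefficients of $\prod X_{ij}^{\ell_{ij}}/\ell_{ij}!$ with all $\ell_{ii}\ge1$. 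They satisfy $(-1)^{\wt(\bm{\ell})}\zeta_{\A_r}\overset{\reg}{(-\bm{\ell})}=(-1)^r\zeta_{\A_r}\overset{\reg}{(-\bm{\ell})}$. Under the trivial zero condition $\wt(\bm{\ell})\equiv r-1$, so the value must vanish.

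\textbf{Base cases.} For $r=1$, put $g(X)=1/(\ee^{-X}-1)$. Then $g(-X)=-1-g(X)$, and together with \eqref{eqn: GA1} this gives
\[
G^{\rm reg}_{\A_1}(-X)=-1-G^{\rm reg}_{\A_1}(X).
\]
The constant $-1$ is negligible, since it is independent of $X_{11}$. For $r=2$ the argument below breaks down, because the shifted entry $(1,r-1)$ is diagonal. In that case I would instead use the explicit formula \eqref{eqn: example of reg rec rel for A2}, which the paper already notes vanishes under the condition. Alternatively one can check the symmetry by hand for $r=2$ from the recurrence.

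\textbf{Inductive step for $r\ge3$.} Apply $\bm{X}\mapsto-\bm{X}$ in \eqref{eqn: rec rel for gen func of reg}. This sends $\bm{X}'\mapsto-\bm{X}'$, $X''\mapsto -X''$ and $\bm{X}'-X''\mapsto-(\bm{X}'-X'')$.
\begin{itemize}
\item The diagonal entries of $\bm{X}'$ are $X_{jj}$ for $j\le r-2$ and $X_{r-1\,r-1}+X_{r-1\,r}$. Hence a series independent of $Y_{jj}$, evaluated at $\bm{X}'$ or at $\bm{X}'-X''$, is still independent of $X_{jj}$. Here it matters that $r\ge3$: the modified entry $(1,r-1)$ is off-diagonal.
\item The first term becomes $G^{\rm reg}_{\A_{r-1}}(-\bm{X}')\,G^{\rm reg}_{\A_1}(-X'')$. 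By induction and the base case this equals $\bigl((-1)^{r-1}G^{\rm reg}_{\A_{r-1}}(\bm{X}')+N\bigr)\bigl(-1-G^{\rm reg}_{\A_1}(X'')\bigr)$ with $N$ negligible. The cross term $-G^{\rm reg}_{\A_{r-1}}(\bm{X}')$ is independent of $X_{rr}$, because $X_{rr}$ enters only through $X''$. So the whole term is $\equiv (-1)^r G^{\rm reg}_{\A_{r-1}}(\bm{X}')G^{\rm reg}_{\A_1}(X'')$.
\item The second term becomes $\bigl(G^{\rm reg}_{\A_{r-1}}(-(\bm{X}'-X''))-G^{\rm reg}_{\A_{r-1}}(-\bm{X}')\bigr)/(-X'')$. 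Substituting the inductive symmetry gives $(-1)^r$ times the original difference quotient, plus the difference quotient of a negligible series.
\end{itemize}

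\textbf{Main obstacle.} The delicate step is the last one: checking that the difference quotient of a negligible series is again a genuine negligible power series. Take a summand $N_j$ independent of $Y_{jj}$ with $j\le r-1$. Then $N_j(\bm{X}'-X'')-N_j(\bm{X}')$ is divisible by $X''$, by the same argument as in the lemma showing $G^{\rm reg}_{\A_r}$ is a power series. The quotient is still free of $X_{jj}$: the map $X_{1\,r-1}\mapsto X_{1\,r-1}-X''$ and the division by $X''$ never introduce $X_{jj}$, since $X''$ contains only $X_{ir}$ and $j\le r-1$. Once this bookkeeping is in place, the induction closes and the coefficient comparison above yields Theorem \ref{thm: trivial zeros of reg Ar zeta}.
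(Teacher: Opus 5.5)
Your proposal is correct and is essentially the paper's proof. Your congruence $G^{\rm reg}_{\A_r}(-\bm{X})\equiv(-1)^rG^{\rm reg}_{\A_r}(\bm{X})$ modulo negligible series is exactly the paper's claim that $G^{\rm reg\,+}_{\A_r}$ is odd or even according to the parity of $r$, and you prove it by the same induction on the recurrence \eqref{eqn: rec rel for gen func of reg}. Your explicit check that the difference quotient of the negligible part stays negligible fills in a step the paper passes over. Your separate treatment of $r=2$ is harmless but unnecessary: a negligible series in the single variable $Y_{11}$ is a constant, so its difference quotient vanishes.
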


\begin{proof}
Put
\begin{align*}
   G^{\rm reg\,+}_{\A_r}(\bm{X}) 
   := \sum_{\ell_{11},\dots,\ell_{rr}>0}
      \sum_{\substack{\ell_{ij}\ge0 \\ 1\le i< j\le r}}
         \zeta_{\A_r}\overset{\reg}{(-\bm{\ell})} \prod_{1\le i\le j\le r} \frac{X_{ij}^{\ell_{ij}}}{\ell_{ij}!}.
\end{align*}
Then, Theorem \ref{thm: trivial zeros of reg Ar zeta} is equivalent to the following claim.\\[5mm]
\textbf{Claim.}
{\it The function $G^{\rm reg\,+}_{\A_r}(X)$ is an odd function when $r\equiv1 \mod2$ and an even function when $r\equiv0 \mod2$.}\\[5mm]
We prove this claim by induction on $r$.
When $r=1$, since 
\begin{align*}
G^{\rm reg\,+}_{\A_1}(X)
:= G^{\rm reg}_{\A_1}(X)-\zeta(0)
= \frac{1}{\ee^{-X}-1}+\frac{1}{X}+\frac{1}{2}
\left(=\sum_{\ell\ge1}\zeta(-\ell)\frac{X^\ell}{\ell!}\right)
\end{align*}
is an odd function, and thus, the claim is clear.

We move on to the case $r>1$. 
By \eqref{eqn: rec rel for gen func of reg}, we have  
\begin{align*}
G^{\rm reg}_{\A_{r}}
= G^{\rm reg}_{\A_{r-1}}(\bm{X}')G^{\rm reg}_{\A_1}(X'')
    + \frac{ G^{\rm reg}_{\A_{r-1}}(\bm{X}'-X'')-G^{\rm reg}_{\A_{r-1}}(\bm{X}') }{X''}. 
\end{align*}
By the induction hypothesis, 
\begin{align*}
G^{\rm reg\,+}_{\A_{r-1}}(\bm{Y})
:= \sum_{\substack{\ell_{ii}>0 \\ 1\le i\le r-1}}\,
   \sum_{\substack{\ell_{ij}\ge0 \\ 1\le i< j\le r-1}}
     \zeta_{\A_{r-1}}\overset{\reg}{(-\bm{\ell})}
     \prod_{1\le i\le j\le r-1}
     \frac{Y_{ij}^{\ell_{ij}}}{\ell_{ij}!}
\end{align*}
is an odd function for $r\equiv0 \mod2$ and an even function for $r\equiv1 \mod2$. 
Since one can prove the claim in a similar manner, we assume $r\equiv0 \mod2$.
We set 
\begin{align*}
 g_{\A_{r-1}}^{\rm reg}(\bm{Y})
 := G^{\rm reg}_{\A_{r-1}}(\bm{Y}) - G^{\rm reg\,+}_{\A_{r-1}}(\bm{Y})
 \in\CC[[\bm{Y}]].
\end{align*}
By definition, $g_{\A_{r-1}}^{\rm reg}(\bm{Y})$ is the sum of all constant functions $G^{\rm reg}_{\A_{r-1}}(\bm{Y})\Big| {}_{Y_{ii}=0}$ for some $1\le i\le r-1$.
Note that one can write
\begin{equation}\label{eqn: decomp of Gar-1Ga1}\begin{split}
&G^{\rm reg}_{\A_{r-1}}(\bm{X}')G^{\rm reg}_{\A_1}(X'')
= (G^{\rm reg +}_{\A_{r-1}}(\bm{X}') + g_{\A_{r-1}}^{\rm reg}(\bm{X}'))
   (G^{\rm reg+}_{\A_1}(X'') + \zeta(0))\\
&= G^{\rm reg +}_{\A_{r-1}}(\bm{X}')G^{\rm reg+}_{\A_1}(X'')
   + G^{\rm reg +}_{\A_{r-1}}(\bm{X}')\zeta(0) 
   + g_{\A_{r-1}}^{\rm reg}(\bm{X}')G^{\rm reg+}_{\A_1}(X'')
   + g_{\A_{r-1}}^{\rm reg}(\bm{X}')\zeta(0)
\end{split}\end{equation}
and the values $\zeta_{\A_{r}}\overset{\reg}{(-\bm{\ell})}$ with $\ell_{ii}>0$ ($1\le i\le r$) and $\ell_{ij}\ge0$ ($1\le i< j\le r$) come from 
\begin{align*}
G^{\rm reg +}_{\A_{r-1}}(\bm{X}')G^{\rm reg+}_{\A_1}(X''), \quad
\frac{ G^{\rm reg}_{\A_{r-1}}(\bm{X}'-X'')-G^{\rm reg}_{\A_{r-1}}(\bm{X}') }{X''}.
\end{align*}
Because the later three parts on the right-hand side of \eqref{eqn: decomp of Gar-1Ga1} are the constant function in some $X_{ii}$ ($1\le i\le r$).
In other words, coefficients of the sum 
\begin{align*}
G^{\rm reg +}_{\A_{r-1}}(\bm{X}')\zeta(0) 
   + g_{\A_{r-1}}^{\rm reg}(\bm{X}')G^{\rm reg+}_{\A_1}(X'')
   + g_{\A_{r-1}}^{\rm reg}(\bm{X}')\zeta(0)
\end{align*}
are the regular values of the form
\begin{align*}
 \zeta_{\A_r}\overset{\reg}{(-\bm{\ell})}
 \quad (\bm{\ell}=(\ell_{ij}) \in \NN_0^{\frac{r(r+1)}{2}}, \ \ell_{ii}=0 \ \ \text{for some } 1\le i\le r).
\end{align*}

The induction hypothesis implies that 
\begin{align*}
G^{\rm reg +}_{\A_{r-1}}(\bm{X}')G^{\rm reg+}_{\A_1}(X''), \quad
\frac{ G^{\rm reg +}_{\A_{r-1}}(\bm{X}'-X'')-G^{\rm reg +}_{\A_{r-1}}(\bm{X}') }{X''}
\end{align*}
are even functions, and thus we obtain the claim.
\end{proof}

\begin{remark}
We cannot remove the condition $\ell_{ii}>0$ ($1\le i\le r$). 
Indeed, under the condition $\ell_{11}+\ell_{12}\equiv1\mod2$, we have 
\begin{align*}
    \zeta_{\A_2}\overset{\reg}{(\begin{smallmatrix} -\ell_{11}&-\ell_{12}\\&0 \end{smallmatrix})}
    = \zeta_2^{\reg}(-\ell_{11},-\ell_{12}) = \zeta(-\ell_{11}-\ell_{12})\zeta(0).
\end{align*}
However, we know $\zeta(-\ell_{11}-\ell_{12})\zeta(0)\ne0$.
\end{remark}

\subsection{Trivial zeros of reverse type}\label{subsec: Trivial zeros of reverse type}
In this section, we prove the vanishing of the reverse values with a different method. We consider a Hurwitz-type zeta function and prove some symmetry of parameters (see Theorem \ref{th:vanishing_reverse_values}).

Let $r>1$, and let $\bm{\ell} = (\ell_{ij})_{1 \leq i \leq j \leq r}\in\NN_0^{\frac{r(r+1)}{2}}$ and $\bm{x}=(x_1,\ldots,x_r)$ be positive real numbers. We set
\[
    Z_{\bm{\ell}}(s;\bm{x}) := \sum_{m_1,\ldots,m_r \geq 0} \frac{\prod_{\substack{1 \leq i \leq j \leq r \\ (i,j) \neq (1,r)}} (m_i+x_i+\cdots+m_j+x_j)^{\ell_{ij}}}{(m_1+x_1+\cdots+m_r+x_r)^s}.
\]
When $\bm{x} = \bm{1}$, we observe that the values at $s=-\ell_{1r}$ correspond to the reverse values $\zeta_{\A_r}(\overset{\rev}{-\bm{\ell}})$.

\begin{theorem}
    \label{th:vanishing_reverse_values}
    We have
    \[
        Z_{\bm{\ell}}(-\ell_{1r};\bm{x}) = (-1)^{\wt(\bm{\ell})-r} Z_{\bm{\ell}}(-\ell_{1r};\bm{1}-\bm{x}) \qquad (0<x_1,\ldots,x_r<1)
    \]
    If we further assume the trivial zero condition \eqref{eq:trivial_zero_condition}, then we have
    \[
        \zeta_{\A_r}(\overset{\rev}{-\bm{\ell}}) = 0.
    \]
\end{theorem}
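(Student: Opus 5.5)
The plan is to turn $Z_{\bm{\ell}}(-\ell_{1r};\bm{x})$ into a Taylor--Laurent coefficient of an explicit exponential generating function, as in the classical Hurwitz zeta computation, and to read off the symmetry $\bm{x}\mapsto\bm{1}-\bm{x}$ from a functional identity of that function.

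\textbf{Generating function.} Put $y_k=m_k+x_k$ and introduce auxiliary variables $u_{ij}$ for $1\le i\le j\le r$, $(i,j)\neq(1,r)$. For $t>0$ and $u$ small, consider
\begin{align*}
 F_{\bm{x}}(t;\bm{u})
 &:=\sum_{m_1,\dots,m_r\ge0}\exp\Big(-t(y_1+\cdots+y_r)+\sum_{(i,j)\ne(1,r)}u_{ij}(y_i+\cdots+y_j)\Big)
 =\prod_{k=1}^r f_{x_k}(c_k),
\end{align*}
where $f_x(c):=\ee^{-xc}/(1-\ee^{-c})$ and $c_k:=t-\sum_{i\le k\le j,\,(i,j)\ne(1,r)}u_{ij}$. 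The numerator of $Z_{\bm{\ell}}$ is recovered by applying $\prod\partial_{u_{ij}}^{\ell_{ij}}$ at $\bm{u}=\bm{0}$.

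Then, for $\re(s)$ large, I would write
\[
Z_{\bm{\ell}}(s;\bm{x})=\frac{1}{\Gamma(s)}\int_0^\infty t^{s-1}\,\Big(\prod\partial_{u_{ij}}^{\ell_{ij}}F_{\bm{x}}\Big)(t;\bm{0})\,dt .
\]
The function $D_{\bm{x}}(t):=(\prod\partial_{u}^{\ell}F_{\bm{x}})(t;\bm{0})$ is a finite combination of derivatives of the $f_{x_k}(t)$. It is therefore meromorphic near $t=0$, with a pole of bounded order, and decays exponentially as $t\to\infty$. The standard Hankel contour argument then gives the analytic continuation in $s$ and the evaluation
\[
Z_{\bm{\ell}}(-n;\bm{x})=(-1)^n n!\,[t^n]D_{\bm{x}}(t),
\]
where $[t^n]$ denotes the coefficient of the Laurent expansion. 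Hence $Z_{\bm{\ell}}(-\ell_{1r};\bm{x})$ is, up to the factor $(-1)^{\ell_{1r}}\ell_{1r}!$, the coefficient of $t^{\ell_{1r}}\prod u_{ij}^{\ell_{ij}}/\ell_{ij}!$ in the expansion of $F_{\bm{x}}$. This coefficient is a polynomial in $\bm{x}$, since the coefficients are built from Bernoulli polynomials. For $\bm{x}=\bm{1}$, the Dirichlet series equals $\zeta_{\A_r}(-\bm{\ell};s_{1r})$ after the shift $m_k\mapsto m_k-1$. Thus $Z_{\bm{\ell}}(-\ell_{1r};\bm{1})=\zeta_{\A_r}(\overset{\rev}{-\bm{\ell}})$, by uniqueness of the continuation in $s_{1r}$ and the definition of the reverse value.

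\textbf{Symmetry.} A one-line computation gives
\[
f_{1-x}(c)=\frac{\ee^{xc}}{\ee^{c}-1}=-f_x(-c).
\]
Hence $F_{\bm{1}-\bm{x}}(t;\bm{u})=(-1)^rF_{\bm{x}}(-t;-\bm{u})$, because each $c_k$ is linear and homogeneous in $(t,\bm{u})$. Extracting the coefficient of $t^{\ell_{1r}}\prod u_{ij}^{\ell_{ij}}$ introduces the sign $(-1)^{\ell_{1r}+\sum_{(i,j)\ne(1,r)}\ell_{ij}}=(-1)^{\wt(\bm{\ell})}$. This yields the first identity $Z_{\bm{\ell}}(-\ell_{1r};\bm{x})=(-1)^{\wt(\bm{\ell})-r}Z_{\bm{\ell}}(-\ell_{1r};\bm{1}-\bm{x})$ for $0<x_k<1$. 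Since both sides are polynomials in $\bm{x}$, the identity extends to $\bm{x}=\bm{1}$, giving $Z_{\bm{\ell}}(-\ell_{1r};\bm{1})=(-1)^{\wt(\bm{\ell})-r}Z_{\bm{\ell}}(-\ell_{1r};\bm{0})$.

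\textbf{Vanishing, the main obstacle.} The difficulty is to compare the values at $\bm{x}=\bm{0}$ and $\bm{x}=\bm{1}$, which is where $\ell_{kk}\ge1$ enters. We have $f_0(c)=f_1(c)+1$, so
\[
F_{\bm{0}}=\prod_k\big(f_1(c_k)+1\big)=\sum_{S\subset\{1,\dots,r\}}\prod_{k\in S}f_1(c_k).
\]
For $r>1$, the variable $u_{kk}$ is available for every $k$ (the only excluded pair is $(1,r)$), and it occurs only in $c_k$. Consequently, if $k\notin S$, the term indexed by $S$ is independent of $u_{kk}$ and is killed by $\partial_{u_{kk}}^{\ell_{kk}}$ when $\ell_{kk}\ge1$. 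Only $S=\{1,\dots,r\}$ survives, so $Z_{\bm{\ell}}(-\ell_{1r};\bm{0})=Z_{\bm{\ell}}(-\ell_{1r};\bm{1})$.

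I expect the care to be needed in justifying that extracting the Laurent coefficient commutes with the $u$-derivatives at $\bm{u}=\bm{0}$. I would handle this by differentiating first, as in the definition of $D_{\bm{x}}$, so that all manipulations are identities of one-variable Laurent series in $t$. Combining the two displays gives $\zeta_{\A_r}(\overset{\rev}{-\bm{\ell}})=(-1)^{\wt(\bm{\ell})-r}\zeta_{\A_r}(\overset{\rev}{-\bm{\ell}})$. When $\wt(\bm{\ell})\equiv r-1\pmod 2$ the sign is $-1$, which forces the reverse value to vanish.
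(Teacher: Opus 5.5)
Your proof is correct, but it takes a different route from the paper's.

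\textbf{The paper's route.} The paper expands $\prod_{(i,j)\neq(1,r)}(X_i+\cdots+X_j)^{\ell_{ij}}$ into monomials $X_1^{n_1}\cdots X_r^{n_r}$, with $n_k\ge\ell_{kk}$ by divisibility, and so reduces $Z_{\bm{\ell}}$ to Barnes-type sums $F_{\bm{n}}(s;\bm{x})$. It then quotes an external formula (Theorem 20 of \cite{rutard2026values}) that writes $F_{\bm{n}}(-m;\bm{x})$ as a combination of products of Hurwitz values $\zeta(-n_a-k;x_a)$. The symmetry comes from $\zeta(-k;1-x)=(-1)^{k+1}\zeta(-k;x)$. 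The comparison of $\bm{0}$ with $\bm{1}$ comes from $\zeta(-k;0)=\zeta(-k;1)$ for $k\ge1$, which is where $n_k\ge\ell_{kk}\ge1$ is used.

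\textbf{Your route.} You keep the linear forms intact and attach one auxiliary variable $u_{ij}$ to each. You then work with $\prod_k f_{x_k}(c_k)$ and evaluate via Mellin/Hankel as $Z_{\bm{\ell}}(-n;\bm{x})=(-1)^n n!\,[t^n]D_{\bm{x}}(t)$. The same two Bernoulli-polynomial facts, $B_j(1-x)=(-1)^jB_j(x)$ and $B_j(0)=B_j(1)$ for $j\ge2$, appear as the identities $f_{1-x}(c)=-f_x(-c)$ and $f_0=f_1+1$. The sign $(-1)^{\wt(\bm{\ell})-r}$ then follows from a single homogeneity observation. The hypothesis $\ell_{kk}\ge1$ enters as ``$u_{kk}$ occurs only in $c_k$'' rather than as divisibility by $X_k^{\ell_{kk}}$.

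\textbf{What each buys.} Your argument is self-contained, with no black-box formula for $F_{\bm{n}}(-m;\bm{x})$. It also makes the polynomial dependence on $\bm{x}$ transparent, and hence the meaning of $Z_{\bm{\ell}}(-\ell_{1r};\bm{0})$ and the extension of the identity to $\bm{x}=\bm{1}$. The paper's argument is shorter once the cited theorem is granted, and it also yields an explicit expression in Hurwitz zeta values.

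Your plan to make the coefficient extraction rigorous is the right one: differentiate in $\bm{u}$ first, then take one-variable Laurent coefficients of $D_{\bm{x}}(t)$. This is needed because $\prod_k f_{x_k}(c_k)$ is not a joint power series in $(t,\bm{u})$. When you write it up, say explicitly that the Laurent coefficients of $f_x^{(a)}(t)$ are polynomials in $x$ valid for all $x$, including $x=0$. That is what identifies the polynomial value at $\bm{0}$ with $[t^{\ell_{1r}}]D_{\bm{0}}(t)$ computed from $f_0$.
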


Because of the symmetry $(x_1,\ldots,x_r) \leftrightarrow (1-x_{1},\ldots,1-x_r)$, the limit $\lim_{\bm{x} \to \bm{0}} Z_{\bm{\ell}}(-\ell_{1r};\bm{x})$ is well-defined and shall be denoted by $Z_{\bm{\ell}}(-\ell_{1r};\bm{0})$. We show the following lemma.

\begin{lemma}
    \label{lemma:symmetry_barnes}
    Let $\bm{n}=(n_1,\ldots,n_r)$ be non-negative integers and set
    \[
        F_{\bm{n}}(s;\bm{x}) := \sum_{m_1,\ldots,m_r \geq 0} \frac{(m_1+x_1)^{n_1} \cdots (m_r+x_r)^{n_r}}{(m_1+x_1+\cdots+m_r+x_r)^s} \qquad (\re(s) \gg 1).
    \]
    Then we have
    \[
        F_{\bm{n}}(-m;\bm{1}-\bm{x}) = (-1)^{m+n_1+\cdots+n_r-r}F_{\bm{n}}(-m;\bm{x}) \qquad (m \in \NN_{0}),
    \]
    and denote $F_{\bm{n}}(-m;\bm{0}) := \lim_{\bm{x} \to \bm{0}} F_{\bm{n}}(-m;\bm{x})$. Furthermore, when $n_1,\ldots,n_r \geq 1$ we have
    \[
        F_{\bm{n}}(-m;\bm{0}) = F_{\bm{n}}(-m;\bm{1}) \qquad (m \in \NN_{0}).
    \]
\end{lemma}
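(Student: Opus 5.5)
The plan is to reduce everything to one-variable Hurwitz zeta values via a Mellin/generating-function representation of $F_{\bm{n}}(s;\bm{x})$, and then read off both symmetries from the reflection property of Bernoulli polynomials.

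First, use $(m_j+x_j)^{n_j}$ with $n_j\ge 0$ integers and the multinomial expansion. Expand $(m_1+x_1+\cdots+m_r+x_r)^{m}$ at $s=-m$. The natural tool is the generating function
\[
\sum_{m_1,\dots,m_r\ge0} e^{-t\sum_j (m_j+x_j)}\prod_j (m_j+x_j)^{n_j} = \prod_{j=1}^r (-\partial_t)^{n_j}\!\left(\frac{e^{-x_jt}}{1-e^{-t}}\right)\Big|_{\text{separately}},
\]
more precisely $\prod_j \bigl((-\partial_{t_j})^{n_j}\frac{e^{-x_jt_j}}{1-e^{-t_j}}\bigr)$ evaluated on the diagonal $t_1=\cdots=t_r=t$. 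Then
\[
F_{\bm{n}}(s;\bm{x})=\frac{1}{\Gamma(s)}\int_0^\infty t^{s-1}\Phi_{\bm n}(t;\bm x)\,dt .
\]
Standard Hankel-contour arguments give $F_{\bm{n}}(-m;\bm{x})=(-1)^m m!\cdot[t^m]\Phi_{\bm n}(t;\bm x)$, i.e. the coefficient of $t^m$ in the Laurent expansion at $t=0$ of $\Phi_{\bm n}$. Each factor $\frac{e^{-x t}}{1-e^{-t}}=\sum_k B_k(1-x)\frac{(-t)^{k-1}}{k!}\cdot(-1)$ (up to normalisation) is a Bernoulli-polynomial generating function.

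Second, the reflection. $\frac{e^{-(1-x)t}}{1-e^{-t}}=\frac{e^{xt}\,e^{-t}}{1-e^{-t}}=\frac{e^{x t}}{e^{t}-1}=-\frac{e^{-x(-t)}}{1-e^{-(-t)}}$. So with $f_x(t):=\frac{e^{-xt}}{1-e^{-t}}$ one has $f_{1-x}(t)=-f_x(-t)$. Differentiating $n_j$ times gives $(-\partial_t)^{n_j}f_{1-x}(t)=-(-1)^{n_j}\bigl[(-\partial_u)^{n_j}f_x\bigr](-t)$. Taking the product over $j$ yields $\Phi_{\bm n}(t;\bm 1-\bm x)=(-1)^{r+|\bm n|}\Phi_{\bm n}(-t;\bm x)$. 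Extracting the $t^m$ coefficient gives the factor $(-1)^m$, hence $F_{\bm n}(-m;\bm1-\bm x)=(-1)^{m+|\bm n|-r}F_{\bm n}(-m;\bm x)$. Since $F_{\bm n}(-m;\bm x)$ is a polynomial in $\bm x$, being a finite combination of Bernoulli polynomials, the limit $\bm x\to\bm0$ exists trivially and equals the value at $\bm x=\bm0$ of that polynomial.

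Third, the case $n_j\ge1$. We want to compare $F_{\bm n}(-m;\bm0)$ with $F_{\bm n}(-m;\bm1)$. At the level of series, shifting $x_j\mapsto x_j+1$ corresponds to dropping the $m_j=0$ term. That term is $(0+x_j)^{n_j}\cdot(\dots)$, which vanishes at $x_j=0$ when $n_j\ge1$. Concretely, $f_{x+1}(t)=f_x(t)-e^{-xt}$, so $(-\partial_t)^{n}f_{x+1}=(-\partial_t)^n f_x - x^n e^{-xt}$. At $x=0$ with $n\ge1$ the correction vanishes identically, so $\Phi_{\bm n}(t;\bm1)=\Phi_{\bm n}(t;\bm0)$ coordinate by coordinate, giving the equality.

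The main obstacle is justifying the Hankel-contour identity relating special values to Laurent coefficients. The issue is that $\Phi$ is built from a product evaluated on the diagonal and has a pole of order $r$ at $t=0$. I would handle this by the usual contour argument: the integrand is holomorphic near $(0,\infty)$ and decays exponentially, so it follows as for Barnes zeta functions. Alternatively, I would expand the polynomial numerator so that $F_{\bm n}$ is a finite combination of Barnes zeta functions with shifted $s$, and quote their known values at nonpositive integers.
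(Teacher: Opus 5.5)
Your proof is correct, and it takes a genuinely different route from the paper. The paper never touches the Mellin transform. It quotes \cite[Theorem 20]{rutard2026values} to write $F_{\bm n}(-m;\bm x)$ as a finite $\QQ$-linear combination of products $\prod_p \zeta(-n_{a_p}-k_p;x_{a_p})$ of Hurwitz zeta values, subject to the degree constraint $k_1+\cdots+k_\alpha=\beta+m+n_{b_1}+\cdots+n_{b_\beta}$. It then applies $\zeta(-k;1-x)=(-1)^{k+1}\zeta(-k;x)$ to each factor and recovers the global sign $(-1)^{m+|\bm n|+r}$ by parity bookkeeping through that constraint. The second claim comes from $\lim_{x\to0}\zeta(-k;x)=\zeta(-k;1)$ for $k\ge1$.

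You instead identify $F_{\bm n}(-m;\bm x)=(-1)^m m!\,[t^m]\Phi_{\bm n}(t;\bm x)$ and push both symmetries down to the one-variable kernel, via $f_{1-x}(t)=-f_x(-t)$ and $f_{x+1}(t)=f_x(t)-e^{-xt}$. These are exactly the Bernoulli identities $B_k(1-x)=(-1)^kB_k(x)$ and $B_k(x+1)-B_k(x)=kx^{k-1}$ that underlie the paper's Hurwitz-zeta facts. Your route buys three things:
\begin{itemize}
\item it is self-contained;
\item the sign falls out of $t\mapsto -t$ with no index bookkeeping;
\item it gives a cleaner reason why $\lim_{\bm x\to\bm0}F_{\bm n}(-m;\bm x)$ exists, namely polynomiality in $\bm x$. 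The paper instead appeals, somewhat loosely, to the reflection symmetry for this.
\end{itemize}
The paper's route is shorter given the cited theorem and yields an explicit expression in Hurwitz zeta values.

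The step you flag as the main obstacle is routine and needs no Hankel contour. Write $\int_0^\infty=\int_0^1+\int_1^\infty$; the second piece is entire, because $\Phi_{\bm n}(t;\bm x)$ decays like $e^{-(x_1+\cdots+x_r)t}$ for $x_j>0$. In the first piece, subtract finitely many Laurent terms of $\Phi_{\bm n}$; the only contribution surviving at $s=-m$ after multiplying by $1/\Gamma(s)$ is $a_m/(s+m)$.

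A few harmless slips:
\begin{itemize}
\item The pole of $\Phi_{\bm n}$ at $t=0$ has order $|\bm n|+r$, not $r$, since $(-\partial_t)^{n}(1/t)=n!/t^{n+1}$.
\item Your Bernoulli expansion is actually that of $f_{1-x}$; the correct one is $f_x(t)=\sum_k B_k(1-x)\,t^{k-1}/k!$. You only use that the coefficients are polynomials in $x$, so nothing breaks.
\item The fallback via Barnes zeta functions with shifted $s$ would not work as stated, because $\prod_j(m_j+x_j)^{n_j}$ is not a polynomial in $\sum_j(m_j+x_j)$.
\end{itemize}
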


\begin{proof}
    By direct application of \cite[Theorem 20]{rutard2026values}, we find that there exist rational numbers $c_{\bm{k},\mathcal{A}}$ such that
    \begin{multline*}
        F_{\bm{n}}(-m;\bm{x}) \\
        = \sum_{\emptyset \neq \mathcal{A} \subseteq \llbracket 1,r \rrbracket} (-1)^{\beta + n_{b_1}+\cdots+n_{b_\beta}} \prod_{p=1}^{\beta} n_{b_p}! \sum_{\substack{k_1+\cdots+k_{\alpha} = \beta + m \\
        \quad +n_{b_1}+\cdots+n_{b_\beta}}} c_{\bm{k},\mathcal{A}} \prod_{p=1}^{\alpha} \frac{\zeta(-n_{a_p}-k_p;x_{a_p})}{k_p!}
    \end{multline*}
    where the first sum symbol corresponds to the sum over all nonempty sets $\mathcal{A}=\{a_1,\ldots,a_{\alpha}\} \subseteq \llbracket 1,r \rrbracket$, and where we denote $\mathcal{B} := \llbracket 1,r \rrbracket \setminus \mathcal{A} = \{ b_1,\ldots,b_{\beta} \}$.
    
     By replacing $(x_1,\ldots,x_r)$ by $(1-x_1,\ldots,1-x_r)$ and using the following symmetry property of the Hurwitz zeta function
    \[
        \zeta(-k;1-x) = (-1)^{k+1} \zeta(-k;x) \qquad (k \in \NN_{0}),
    \]
    we get
    \begin{multline*}
        F_{\bm{n}}(-m;\bm{1}-\bm{x}) = \sum_{\emptyset \neq \mathcal{A} \subseteq \llbracket 1,r \rrbracket} (-1)^{\beta+n_{b_1}+\cdots+n_{b_\beta}} \prod_{p=1}^{\beta} n_{b_p}! \\
        \times \sum_{\substack{k_1+\cdots+k_{\alpha} = \beta + m \\
        \quad +n_{b_1}+\cdots+n_{b_\beta}}} (-1)^{\alpha + k_1 + n_{a_1}+\cdots+k_{\alpha}+n_{a_{\alpha}}} c_{\bm{k},\mathcal{A}} \prod_{p=1}^{\alpha} \frac{\zeta(-n_{a_p}-k_p;x_{a_p})}{k_p!}.
    \end{multline*}
    Finally, we observe that $(-1)^{\alpha+k_1+n_{a_1}+\cdots+k_{\alpha}+n_{a_{\alpha}}} = (-1)^{m+n_1+\cdots+n_r+r}$, which gives $F_{\bm{n}}(-m;\bm{1}-\bm{x}) = (-1)^{m+n_1+\cdots+n_r+r} F_{\bm{n}}(-m;\bm{x})$. The last relation of the lemma follows from $\lim_{x \to 0} \zeta(-k;x) = \zeta(-k;1)$, which is true for all $k \in \mathbb{N}$.
\end{proof}

\begin{proof}[Proof of Theorem \ref{th:vanishing_reverse_values}]
The polynomial $\prod_{\substack{1 \leq i \leq j \leq r \\ (i,j) \neq (1,r)}} (X_i+\cdots+X_j)^{\ell_{ij}}$ is homogeneous of degree $\wt(\bm{\ell})-\ell_{1r}$ and divisible by $X_1^{\ell_{11}} X_2^{\ell_{22}} \cdots X_r^{\ell_{rr}}$. Therefore, we can write
\[
    \prod_{\substack{1 \leq i \leq j \leq r \\ (i,j) \neq (1,r)}} (X_i+\cdots+X_j)^{\ell_{ij}} = \sum_{\substack{n_1 \geq \ell_{11},\ldots,n_r \geq \ell_{rr} \\ n_1+\cdots+n_r = \wt(\bm{\ell})-\ell_{1r}}} C_{\bm{n}} X_1^{n_1} \cdots X_r^{n_r},
\]
where $C_{\bm{n}}$ are integer coefficients. 
We can then express $Z_{\bm{\ell}}(s;\bm{x})$ as a linear combination of $F_{\bm{n}}(s;\bm{x})$,
\[
    Z_{\bm{\ell}}(s;\bm{x}) = \sum_{\substack{n_1 \geq \ell_{11}, \ldots, n_r \geq \ell_{rr} \\ n_1+\cdots+n_r = \wt(\bm{\ell})-\ell_{1r}}} C_{\bm{n}} F_{\bm{n}}(\bm{s};\bm{x}).
\]
After applying Lemma \ref{lemma:symmetry_barnes}, we obtain $Z_{\bm{\ell}}(-\ell_{1r};\bm{x}) = (-1)^{\wt(\bm{\ell})-r} Z_{\bm{\ell}}(-\ell_{1r};\bm{1}-\bm{x})$.

Further assume that $\ell_{11},\ell_{22},\ldots,\ell_{rr} \geq 1$, then by applying Lemma \ref{lemma:symmetry_barnes}, we get that $F_{\bm{n}}(-\ell_{1r};\bm{0}) = F_{\bm{n}}(-\ell_{1r};\bm{1})$ for all $n_1 \geq \ell_{11}, \ldots, n_r \geq \ell_{rr}$. Therefore, $Z_{\bm{\ell}}(-\ell_{1r};\bm{0}) = Z_{\bm{\ell}}(-\ell_{1r};\bm{1})$. If we further assume that $\wt(\bm{\ell}) = r-1 \mod 2$, we get the vanishing result.
\end{proof}

\section{The other zeros of zeta functions of type \texorpdfstring{$A_r$}{A} and Eisenstein series}
\label{sec: other zeros}
In this section, we study the other zeros of the zeta functions of $\A_r$ at non-positive integer points. 
As stated in Introduction, Romik \cite{romik2017representations} and Au \cite{au2024vanishingwitten} proved the vanishing of the Witten zeta function at negative integer points. 
More precisely, Romik studied the zeta function $\zeta^W_{\A_2}(s):=\zeta_{\A_2}(s,s,s)$ and Au also obtained the vanishing results \cite[Theorem 1.2]{au2024vanishingwitten} for the general cases, including the other root systems. According to Au's results, for 
$\zeta_{\A_r}^W(s):=\zeta_{\A_r}(s,\dots,s)$, it follows 
\begin{align*}
  \zeta_{\A_r}^W(-\ell)=0 \quad (\ell\in\NN, \ r \geq 2).
\end{align*}

These studies naturally suggest a potential connection between the zeros of the ordered limit values at non-positive integer tuples that do not satisfy the trivial zero condition and the zeros of the Witten zeta functions.
We first treat the regular values.

\begin{proposition}\label{prop: reg Witten for A2}
Let $\ell\in\NN$ with $\ell\equiv0 \mod2$. We have 
\begin{align*}
   \zeta_{\A_2}\overset{\reg}{(\begin{smallmatrix} -\ell&-\ell\\&-\ell\end{smallmatrix})}=0.
\end{align*}
\end{proposition}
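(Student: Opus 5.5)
The plan is to identify the regular value with the value at $s=-\ell$ of the one-variable Witten function $\zeta^W_{\A_2}(s)=\zeta_{\A_2}(s,s,s)$, and then quote the vanishing $\zeta^W_{\A_2}(-\ell)=0$ of Romik \cite{romik2017representations} and Au \cite{au2024vanishingwitten}. Both quantities will be computed from the same Mellin--Barnes shifting argument used in the proof of Proposition \ref{prop: recurrence formula for reg val of Ar-zeta}.

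\textbf{Step 1: the general expansion.} Shifting the contour to $\re(z)=N+\epsilon$, exactly as in the proof of Proposition \ref{prop: recurrence formula for reg val of Ar-zeta} with $r=2$, I expect the identity
\begin{align*}
\zeta_{\A_2}(\begin{smallmatrix} s_{11}&s_{12}\\&s_{22}\end{smallmatrix})
=\sum_{k=0}^{N}\binom{-s_{12}}{k}\zeta(s_{11}+s_{12}+k)\zeta(s_{22}-k)
+\frac{\Gamma(s_{12}+s_{22}-1)\Gamma(1-s_{22})}{\Gamma(s_{12})}\zeta(s_{11}+s_{12}+s_{22}-1)+I_{N+\epsilon}.
\end{align*}
This should hold for independent complex variables near $(-\ell,-\ell,-\ell)$. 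The integral $I_{N+\epsilon}$ is holomorphic in all three variables there and contains the factor $1/\Gamma(s_{12})$. It therefore vanishes along any approach to $s_{12}=-\ell$ once $N$ is taken large, say $N=3\ell+2$.

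\textbf{Step 2: the regular value.} Taking the limits in the order $s_{22}$, then $s_{12}$, then $s_{11}$ gives formula \eqref{eqn: example of reg rec rel for A2}. Write it as $S+T$, where
\begin{align*}
S=\sum_{k=0}^{\ell}\binom{\ell}{k}\zeta(-2\ell+k)\zeta(-\ell-k),\qquad
T=(-1)^{\ell+1}\frac{\ell!^2}{(2\ell+1)!}\zeta(-3\ell-1).
\end{align*}
The terms with $k>\ell$ drop out, because $\binom{\ell}{k}=0$ and no pole of $\zeta(s_{11}+s_{12}+k)$ survives after $s_{12}$ has already been set to $-\ell$.

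\textbf{Step 3: the Witten value.} Set all three variables equal to $s$ and let $s\to-\ell$. The $k\le\ell$ terms again give $S$. The Gamma term becomes $\Gamma(2s-1)\Gamma(1-s)/\Gamma(s)$. Comparing the residues of $\Gamma(2s-1)$ at $2s-1=-2\ell-1$ (which carries an extra factor $\tfrac12$ from the chain rule) and of $\Gamma(s)$ at $s=-\ell$, it tends to $\tfrac12(-1)^{\ell+1}\ell!^2/(2\ell+1)!$. So the Gamma term contributes only $\tfrac12 T$.

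The missing half should come from the term $k=2\ell+1$ of the sum. There $\binom{-s}{2\ell+1}$ has a simple zero at $s=-\ell$, coming from its factor $(-s-\ell)$. At the same time $\zeta(2s+2\ell+1)$ has a simple pole with principal part $\tfrac{1}{2(s+\ell)}$. The product tends to $-\tfrac12\prod_{j\ne\ell}(\ell-j)/(2\ell+1)!=-\tfrac12(-1)^{\ell}\ell!^2/(2\ell+1)!$, and it is multiplied by $\zeta(-3\ell-1)$. This gives exactly another $\tfrac12 T$.

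I must check that every other $k\le N$ with $k\ne 2\ell+1$ has $2s+k\ne1$ at $s=-\ell$. Such terms have no pole, and for $k>\ell$ they carry the zero of $\binom{-s}{k}$, so they vanish. Hence $\zeta^W_{\A_2}(-\ell)=S+T=\zeta_{\A_2}\overset{\reg}{(\begin{smallmatrix} -\ell&-\ell\\&-\ell\end{smallmatrix})}$.

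\textbf{Step 4: conclusion.} By Au's theorem quoted above, $\zeta^W_{\A_2}(-\ell)=0$ for $\ell\in\NN$, and this gives the claim. The parity hypothesis is then not even needed; the evenness of $\ell$ is presumably what makes the statement not already a case of Theorem \ref{thm: main theorem}.

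\textbf{Main obstacle.} The delicate step is Step 3: bookkeeping the $0\cdot\infty$ term at $k=2\ell+1$ and the halved Gamma residue, with the correct signs. A sign error there would produce $S$ instead of $S+T$.

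\textbf{Fallback.} If this matching fails, I would instead verify $S+T=0$ directly as a Bernoulli-number identity. For $\ell$ even only the terms with $k$ odd survive in $S$. After substituting $\zeta(-n)=-B_{n+1}/(n+1)$, the resulting identity should be precisely the constant-term form of Romik's Eisenstein relation. It would then follow from comparing Fourier coefficients in that relation, which in turn rests on the finite dimensionality of the space of modular forms of weight $6\ell+2$.
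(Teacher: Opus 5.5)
Your proof is correct, and it takes a different route from the paper's. The paper stays inside the regular-value formalism. It evaluates the regular value with the $\A_2$ recurrence \eqref{eqn: example of reg rec rel for A2}, which gives your $S+T$. It then sets $\ell=2n$ to rewrite this as \eqref{eqn: shadow relation for A2 negative form}, and quotes Romik's identity (22) in \cite{romik2017representations} saying that this expression vanishes.

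You add a structural lemma instead: $\zeta^W_{\A_2}(-\ell)$ equals $S+T$ for every $\ell\ge1$, so the regular value at $(-\ell,-\ell,-\ell)$ is literally the value of the one-variable Witten function. The key observation checks out. Along the diagonal, the Gamma term contributes only $\tfrac12T$. The $k=2\ell+1$ term is a $0\cdot\infty$ product that is killed identically in the regular order of limits, but on the diagonal it supplies the missing $\tfrac12T$. I verified both local expansions and their signs. Your choice $N=3\ell+2$ also keeps $I_{N+\epsilon}$ holomorphic near $(-\ell,-\ell,-\ell)$.

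What your route buys:
\begin{itemize}
\item The vanishing theorem of Romik/Au is used only as a black box, with no need to match normalizations against a specific displayed formula of Romik.
\item It explains the ``non-trivial'' zero as a genuine zero of the Witten function.
\item It makes plain that the parity hypothesis plays no role; odd $\ell$ is a trivial zero anyway.
\end{itemize}
The paper's route is shorter.

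Do not expect your identification to extend to higher rank. The paper computes $\zeta_{\A_5}\overset{\reg}{(-1,\dots,-1)}\neq0$, while Au's theorem gives $\zeta^W_{\A_5}(-1)=0$. So regular and Witten values cannot coincide in general, and this argument does not by itself settle \eqref{eqn: non trivial vanishing for A4}.

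A small slip in your fallback: Romik's Eisenstein relation lives in weight $6n+2=3\ell+2$, not $6\ell+2$.
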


\begin{remark}
In this case, a tuple $(\ell,\ell,\ell)$ does not satisfy the trivial zero condition \eqref{eq:trivial_zero_condition} for $\zeta_{\A_2}(s_{11},s_{12},s_{22})$.
\end{remark}

\begin{proof}
By \eqref{eqn: example of reg rec rel for A2}, we have 
\begin{align*}
   \zeta_{\A_2} \overset{\reg}{\left(\begin{smallmatrix}-\ell&-\ell\\&-\ell\end{smallmatrix}\right)}
 = \sum_{k=0}^{\ell} \binom{\ell}{k} \zeta(-2\ell+k)\zeta(-\ell-k)
    - \frac{1}{2\ell+1} \binom{2\ell}{\ell}^{-1} \zeta(-3\ell-1).
\end{align*}
Note that we now assume $\ell>0$ is even, so the right-hand side can be written as 
\begin{equation}\label{eqn: shadow relation for A2 negative form}
 \sum_{k=1}^{n} \binom{2n}{2k-1} \zeta(-4n+2k-1)\zeta(-2n-2k-1)
    - \frac{1}{4n+1} \binom{4n}{2n}^{-1} \zeta(-6n-1).
\end{equation}
Here, we put $\ell=2n$. Romik \cite[(22)]{romik2017representations} has already proved that  \eqref{eqn: shadow relation for A2 negative form} is $0$ and therefore we obtain the claim.
\end{proof}

We review how to presume the identity among Eisenstein series from the zeros of $\zeta_{\A_r}$.
By the functional equation of the Riemann zeta function, we have 
\begin{align*}
    \zeta(-\ell) = \frac{\sin(\frac{\pi\ell}{2})}{2^{\ell}\pi^{\ell+1}}\ell!\zeta(\ell+1),
\end{align*}
and find that the vanishing $\zeta_{\A_2} \overset{\reg}{\left(\begin{smallmatrix}-\ell&-\ell\\&-\ell\end{smallmatrix}\right)}=0$ is equivalent to the following identity:
\begin{equation}\label{eqn: shadow relation for A2 positive form}
  \zeta(6n+2) 
    = \frac{2}{6n+1} \frac{(4n+1)!}{(2n)!^2} 
        \sum_{k=1}^{n} \frac{\binom{2n}{2k-1}}{\binom{6n}{2n+2k-1}} \zeta(4n-2k+2)\zeta(2n+2k).
\end{equation}
From the classical results in the theory of modular forms, we know that the Eisenstein series 
\begin{align*}
    G_k(\tau) = \sum_{(m,n)\in\ZZ^2\setminus\{(0,0)\}}\frac{1}{(m\tau+n)^k}
    = 2\zeta(k)\left( 1-\frac{2k}{B_k}\sum_{n\ge1}\sum_{d|n}d^{k-1}\ee^{2\pi \ii n\tau } \right),
\end{align*}
where $k\ge4$ is an even integer and $\tau\in\mathbb{H}$, has the Riemann zeta value as the constant term. 
Then, Romik speculated and proved that equation \eqref{eqn: shadow relation for A2 positive form} can be lifted to the identity among Eisenstein series:
\begin{equation}\label{eqn: Eis rel for A2}
  G_{6n+2}(\tau) 
  = \frac{1}{6n+1} \frac{(4n+1)!}{(2n)!^2} 
      \sum_{k=1}^{n} \frac{\binom{2n}{2k-1}}{\binom{6n}{2n+2k-1}} G_{4n-2k+2}(\tau)G_{2n+2k}(\tau).
\end{equation}
Note that we obtain \eqref{eqn: shadow relation for A2 positive form} by taking limit $\tau \rightarrow\ii\infty$ in \eqref{eqn: Eis rel for A2}.

Au also conjectured such non-trivial identities among $G_k(\tau)$ from the zeros of Witten zeta functions of other root systems.

It is then natural to wonder what happens in the general case. In particular, one might expect that 
\begin{align*}
    \zeta_{\A_r}\overset{\reg}
      {\left(\begin{smallmatrix} -\ell &\cdots&-\ell \\ 
                                  &\ddots&\vdots \\
                                  &      &-\ell  \end{smallmatrix}\right)}=0 
    \quad (\ell\in\NN \text{ with } \frac{r(r+1)}{2}\ell\equiv r\mod2 )
\end{align*}
However, this does not hold in general. We exhibit our observation as follows.
\begin{itemize}
\item For $\A_3$, since $6\ell\equiv0\mod2$ for any $\ell\in\NN$, all negative integer tuples $(-\ell,\dots-\ell)$ are the trivial zeros of regular values of $\zeta_{\A_3}$.
\item For $\A_4$, we conjecture the vanishing 
\begin{equation}\label{eqn: non trivial vanishing for A4}
    \zeta_{\A_4}\overset{\reg}
      {\left(\begin{smallmatrix} -\ell &-\ell &-\ell &-\ell \\ 
                                       &-\ell &-\ell &-\ell \\
                                       &      &-\ell &-\ell \\
                                       &      &      &-\ell \\\end{smallmatrix}\right)}=0 
\end{equation}
for any $\ell\in\NN$ by Mathematica calculation. Note that for any $\ell\in\NN$, since $10\ell\equiv0 \mod2$, all negative integer tuples $(-\ell,\dots-\ell)$ do not satisfy the trivial zero condition $\zeta_{\A_4}(\bm{s})$. 
\item For $\A_5$, in contrast to the cases $\A_2$ and $\A_4$, the first non-trivial case ($\ell=1$) gives
\begin{align*}
    \zeta_{\A_5}\overset{\reg}
      {\left(\begin{smallmatrix} -1 &-1&-1&-1&-1 \\ 
                                    &-1&-1&-1&-1 \\
                                    &  &-1&-1&-1 \\
                                    &  &  &-1&-1 \\
                                    &  &  &  &-1  \end{smallmatrix}\right)}
    =\frac{265469969}{2787700217702400}.
\end{align*}
\end{itemize}

For $A_r$ with $r\ge6$, we do not currently have a plausible conjecture.

If we assume the vanishing for $\zeta_{\A_4}\overset{\reg}{(-\ell,\dots,-\ell)}$, we conjecturally have the following identity among the Eisenstein series. Since we can obtain the following by just using \eqref{eqn: rec. rel. for reg Ar zeta} inductively and the result is quite long, we omit the details.
\begin{equation}\label{eqn: Eis rel for regA4}\begin{split}
   c_1G_{10\ell+4} 
   = &\sum_{k_2,k_3,k_4,k_6,k_7,k_9}\!\!\!\!\!
       c_2(\bm{k})G_{2\ell-k_9+k_{36}+1}G_{\ell+k_{479}+1}G_{4\ell-k_{234}+1}G_{3\ell-k_{67}+k_2+1} \\
     &\sum_{k_3,k_4,k_6,k_7,k_9} 
       c_3(\bm{k})G_{2\ell-k_9+k_{36}+1}G_{\ell+k_{479}+1}G_{7\ell-k_{3467}+2} \\
     &\sum_{k_2,k_4,k_6,k_7,k_9} 
       c_4(\bm{k})G_{\ell+k_{479}+1}G_{6\ell-k_{249}+k_6+2}G_{3\ell-k_{67}+k_2+1} \\
     &\sum_{k_4,k_6,k_7,k_9} 
       c_5(\bm{k})G_{\ell+k_{479}+1}G_{9\ell-k_{479}+3} \\
     &\sum_{k_2,k_3,k_6,k_7,k_9} 
       c_6(\bm{k})G_{2\ell-k_9+k_{36}+1}G_{5\ell+k_{79}-k_{23}+2}G_{3\ell-k_{67}+k_2+1} \\
     &\sum_{k_3,k_6,k_7,k_9} 
       c_7(\bm{k})G_{2\ell-k_9+k_{36}+1}G_{8\ell-k_{36}+k_9+3}\\
     &\sum_{k_2,k_6,k_7,k_9} 
       c_8(\bm{k})G_{7\ell+k_{67}-k_2+3}G_{3\ell-k_{67}+k_2+1}.
\end{split}\end{equation}
Here, each summation is finite.  We put $k_{pqrs}:=k_p+k_q+k_r+k_s$ for $p,q,r,s\in\{2,3,4,6,7,9\}$ ($k_{pq}$, $k_{pqr}$ are as well), $G_n:=G_n(\tau)$, and the coefficients $c_1, c_2(\bm{k}),\dots,c_8(\bm{k})\in\QQ$ depending on $\ell$ are described in Appendix below.
We should emphasize that we still do not know how to prove \eqref{eqn: non trivial vanishing for A4} and the identity \eqref{eqn: Eis rel for regA4}.

Lastly, we mention the other zeros of the diagonal and reverse values. We easily find that 
\begin{align*}
    \zeta_{\A_2}\overset{\bullet}{(\begin{smallmatrix} -\ell&-\ell\\&-\ell\end{smallmatrix})}\ne0
\end{align*}
for $\bullet\in\{\diag,\rev\}$ and even $\ell\in\NN$. Thus, we do not expect that we always have vanishing of any ordered limit values at non-positive integer points for general $\A_r$ ($r\ge3$), except for the case of trivial zeros.
It would be an interesting problem to see which ordered limit values vanish with negative integer tuples that do not satisfy the trivial zeros condition \eqref{eq:trivial_zero_condition}.

\appendix
\section{Complements}
As mentioned above, we provide an explanation of the computation of the coefficients $c_1, c_2(\bm{k}),\dots,c_8(\bm{k})\in\QQ$ appearing in \eqref{eqn: Eis rel for regA4}.
We apply \eqref{eqn: rec. rel. for reg Ar zeta} for $\zeta_{\A_4}\overset{\reg}{(-\bm{\ell})}$ inductively and obtain the following. 
{\small
\begin{align*}
&\zeta_{\A_4}\overset{\reg}
      {\left(\begin{smallmatrix} -\ell &-\ell &-\ell &-\ell \\ 
                                       &-\ell &-\ell &-\ell \\
                                       &      &-\ell &-\ell \\
                                       &      &      &-\ell \\\end{smallmatrix}\right)} \\
&= \sum_{k_4=0}^{\ell}\sum_{k_7=0}^{\ell}\sum_{k_9=0}^{\ell}
    \sum_{k_3=0}^{2\ell-k_4}\sum_{k_6=0}^{2\ell-k_7}\sum_{k_2=0}^{3\ell-k_{34}}
      \binom{\ell}{k_7}\binom{\ell}{k_9}\binom{\ell}{k_4}
      \binom{2\ell-k_7}{k_6}\binom{2\ell-k_4}{k_3}\binom{3\ell-k_{34}}{k_2}\\
&\hspace{45mm}\cdot    
    \zeta(-2\ell+k_9-k_{36} )\zeta(-\ell-k_{479})\zeta(-4\ell+k_{234})\zeta(-3\ell+k_{67}-k_2) \\
&\quad + 
  \sum_{k_4=0}^{\ell}\sum_{k_7=0}^{\ell}\sum_{k_9=0}^{\ell}\sum_{k_3=0}^{2\ell-k_4}\sum_{k_6=0}^{2\ell-k_7}
     \binom{\ell}{k_7}\binom{\ell}{k_9}\binom{\ell}{k_4}\binom{2\ell-k_4}{k_3}\binom{2\ell-k_7}{k_6}
     \frac{(-1)^{3\ell-k_{67}+1}}{6\ell-k_{3467}+1}
         \binom{6\ell-k_{3467}}{3\ell-k_{34}}^{-1}\\
&\hspace{45mm}\cdot
         \zeta(-2\ell+k_9-k_{36})\zeta(-\ell-k_{479})\zeta(-7\ell+k_{3467}-1) \\  
&\qquad+ 
   \sum_{k_4=0}^{\ell}\sum_{k_7=0}^{\ell}\sum_{k_9=0}^{\ell}
   \sum_{k_6=0}^{2\ell-k_7}\sum_{k_2=0}^{5\ell-k_{49}+k_6+1}
     \binom{\ell}{k_7}\binom{\ell}{k_9}\binom{\ell}{k_4}\binom{2\ell-k_7}{k_6}\binom{5\ell-k_{49}+k_6+1}{k_2}\\
&\hspace{15mm}\cdot     
     \frac{(-1)^{2\ell-k_9+k_6+1}}{4\ell-k_{49}+k_6+1}
     \binom{4\ell-k_{49}+k_6}{2\ell-k_4}^{-1}
       \zeta(-\ell-k_{479})\zeta(-6\ell+k_{249}-k_6-1)\zeta(-3\ell+k_{67}-k_2) \\
&\quad + 
   \sum_{k_4=0}^{\ell}\sum_{k_7=0}^{\ell}\sum_{k_9=0}^{\ell}\sum_{k_6=0}^{2\ell-k_7}
     \binom{\ell}{k_7}\binom{\ell}{k_9}\binom{\ell}{k_4}\binom{2\ell-k_7}{k_6}
     \frac{(-1)^{5\ell-k_{79}}}{(4\ell-k_{49}+k_6+1)(8\ell-k_{479}+2)} \\
&\hspace{35mm}\cdot
     \binom{4\ell-k_{49}+k_6}{2\ell-k_4}^{-1} \binom{8\ell-k_{479}+1}{5\ell-k_{49}+k_6+1}^{-1}
     \zeta(-\ell-k_{479})\zeta(-9\ell+k_{479}-2) \\ 
&\qquad + 
  \sum_{k_7=0}^{\ell}\sum_{k_9=0}^{\ell}
  \sum_{k_3=0}^{3\ell+k_{79}+1}\sum_{k_6=0}^{2\ell-k_7}\sum_{k_2=0}^{4\ell+k_{79}-k_3+1}
    \binom{\ell}{k_7}\binom{\ell}{k_9}\binom{2\ell-k_7}{k_6}\binom{3\ell+k_{79}+1}{k_3} 
    \binom{4\ell+k_{79}-k_3+1}{k_2}\\
&\hspace{15mm}\cdot 
    \frac{(-1)^{\ell+k_7+k_9+1}}{2\ell+k_{79}+1}
    \binom{2\ell+k_{79}}{\ell}^{-1} 
    \zeta(-2\ell+k_9-k_{36})\zeta(-5\ell-k_{79}+k_{23}-1)\zeta(-3\ell+k_{67}-k_2)\\
&\quad +
   \sum_{k_7=0}^{\ell}\sum_{k_9=0}^{\ell}\sum_{k_3=0}^{3\ell+k_{79}+1}\sum_{k_6=0}^{2\ell-k_7}
    \binom{\ell}{k_7}\binom{\ell}{k_9}\binom{2\ell-k_7}{k_6}\binom{3\ell+k_{79}+1}{k_3} 
    \frac{(-1)^{4\ell-k_{6}+k_9}}{(7\ell-k_{36}+k_9+2)(2\ell+k_{79}+1)}\\
&\hspace{23mm}\cdot
     \binom{7\ell-k_{36}+k_9+1}{4\ell+k_{79}-k_3+1}^{-1} \binom{2\ell+k_{79}}{\ell}^{-1}  
     \zeta(-2\ell+k_9-k_{36})\zeta(-8\ell+k_{36}-k_9-2)       \\
&\qquad+ 
  \sum_{k_7=0}^{\ell}\sum_{k_9=0}^{\ell}\sum_{k_6=0}^{2\ell-k_7}\sum_{k_2=0}^{6\ell+k_{67}+2}
    \binom{\ell}{k_7}\binom{\ell}{k_9}\binom{2\ell-k_7}{k_6}\binom{6\ell+k_{67}+2}{k_2} 
    \frac{(-1)^{3\ell+k_{67}}}{(2\ell+k_{79}+1)(5\ell+k_{67}+2)}\\
&\hspace{32mm}\cdot    
    \binom{2\ell+k_{79}}{\ell}^{-1}
    \binom{5\ell+k_{67}+1}{3\ell+k_{79}+1}^{-1}
      \zeta(-7\ell-k_{67}+k_2-2)\zeta(-3\ell+k_{67}-k_2)\\
&\quad + 
   \sum_{k_7=0}^{\ell}\sum_{k_9=0}^{\ell}\sum_{k_6=0}^{2\ell-k_7}
    \binom{\ell}{k_7}\binom{\ell}{k_9}\binom{2\ell-k_7}{k_6}
    \frac{ (-1)^{6\ell+3} }{(2\ell+k_{79}+1)(5\ell+k_{67}+2)(9\ell+3)}\\
&\hspace{32mm}\cdot  
    \binom{2\ell+k_{79}}{\ell}^{-1}
    \binom{5\ell+k_{67}+1}{3\ell+k_{79}+1}^{-1} 
    \binom{9\ell+2}{6\ell+k_{67}+2}^{-1}
         \zeta(-10\ell-3).
\end{align*}}
Combining this lengthy summation with the functional equation of the Riemann zeta function, we obtain the coefficients $c_1, c_2(\bm{k}),\dots,c_8(\bm{k})\in\QQ$.

\bibliography{./Bibliography.bib}
\bibliographystyle{alpha}
\end{document}